\documentclass[reqno,11pt]{amsart}
\usepackage{epsfig}
\usepackage{color}
\usepackage{enumerate} 
\setlength{\oddsidemargin}{0.0in}
\setlength{\evensidemargin}{0.0in}
\setlength{\textwidth}{6.5in}
\setlength{\topmargin}{0.0in}
\setlength{\textheight}{8.5in}

\usepackage{times}
\usepackage{amsfonts,amsmath,amssymb}
\usepackage{mathrsfs}
\usepackage[latin1]{inputenc}
\usepackage[T1]{fontenc}
\numberwithin{equation}{section}
\newtheorem{theorem}{Theorem}[section]
\newtheorem{lemma}[theorem]{Lemma}
\newtheorem{remark}[theorem]{Remark}

\newtheorem{Corollary}[theorem]{Corollary}
\allowdisplaybreaks

\newtheorem{Prop}{Proposition}[section]

\def\qed{\hfill$\square$\par \bigskip}

\newcommand{\R}{\mathbb{R}}

\newcommand{\D}{\mathcal{D}}

\newcommand{\s}{\mathbb{S}}

\newcommand{\nablas}{\nabla_{x'}}
\newcommand{\norm}[1]{\|#1\|}
\newcommand{\abs}[1]{\left\vert#1\right\vert}

\newcommand{\para}[1]{\left(#1\right)}

\newcommand{\seq}[1]{\left<#1\right>}

\def\beq{\begin{equation}}
\def\eeq{\end{equation}}
\renewcommand{\leq}{\leqslant}
\renewcommand{\geq}{\geqslant}
\newcommand{\bea}{\begin{eqnarray}}
\newcommand{\eea}{\end{eqnarray}}
\newcommand{\beas}{\begin{eqnarray*}}
\newcommand{\eeas}{\end{eqnarray*}}

{
\newcommand{\bel}{\begin{equation} \label}
\newcommand{\ee}{\end{equation}}

\newcommand{\pd}{\partial}
\newcommand{\cA}{\mathcal{A}}
\newcommand{\cB}{\mathcal{B}}
\newcommand{\sB}{\mathscr{B}}
\newcommand{\cC}{\mathcal{C}}
\newcommand{\cD}{\mathcal{D}}

\newcommand{\cH}{\mathcal{H}}
\newcommand{\sH}{\mathscr{H}}
\newcommand{\cN}{\mathcal{N}}

\newcommand{\cP}{\mathcal{P}}
\newcommand{\sX}{\mathscr{X}}
\newcommand{\cZ}{\mathcal{Z}}
\newcommand{\eps}{\epsilon}
\newcommand{\supp}{\mathrm{supp}}

\usepackage{graphicx}

\newcommand{\re}{\textrm{Re}}
\renewcommand{\d}{\textrm{d}}

\usepackage{times}
\begin{document}
\title{An inverse problem for the magnetic Schr\"odinger equation in infinite cylindrical domains}
\author{M. Bellassoued}
\address{University of Tunis El Manar, National Engineering School of Tunis, ENIT-LAMSIN, B.P. 37, 1002 Tunis, Tunisia}
\email{mourad.bellassoued@fsb.rnu.tn}
\author{Y. Kian}
\address{Aix-Marseille Universit\'e, CNRS, CPT UMR 7332, 13288 Marseille \& Universit\'e de Toulon, CNRS, CPT UMR 7332, 83957 La Garde, France.}
\email{yavar.kian@univ-amu.fr}
\author{E. Soccorsi}
 \address{Aix-Marseille Universit\'e, CNRS, CPT UMR 7332, 13288 Marseille \& Universit\'e de Toulon, CNRS, CPT UMR 7332, 83957 La Garde, France.}
\email{eric.soccorsi@univ-amu.fr}

\begin{abstract}
We study the inverse problem of determining the magnetic field and the electric potential entering the Schr\"odinger
equation in an infinite 3D cylindrical domain, by Dirichlet-to-Neumann map. The cylindrical domain we consider is a closed waveguide in the sense that the cross section is a bounded domain of the plane. We prove that the
knowledge of the Dirichlet-to-Neumann map determines uniquely, and even H\"older-stably, 
the magnetic field induced by the magnetic potential and the electric potential. Moreover, if the maximal strength of both the magnetic field and the electric potential, is attained in a fixed bounded subset of the domain, we extend the above results by taking finitely extended boundary observations of the solution, only.\medskip \\
{\bf Keywords:} Inverse problem, magnetic Schr\"odinger equation, Dirichlet-to-Neumann map, infinite cylindrical domain.
\end{abstract}

\maketitle
\section{Introduction}
\setcounter{equation}{0}

\subsection{Statement of the problem}
Let $\omega$ be a bounded and simply connected domain of $\R^2$ with $C^2$ boundary $\pd \omega$, and set $\Omega:=\omega\times\R$. 
For $T>0$, we consider the initial boundary value problem (IBVP)
\bel{1.1}
\left\{
\begin{array}{ll}
\para{i\pd_t+\Delta_A+q} u=0,  & \mbox{in}\ Q:=(0,T) \times \Omega,\\
u(0,\cdot)=0, & \mbox{in}\ \Omega,\\
u=f, & \mbox{on}\ \Sigma:=(0,T) \times \Gamma,
\end{array}
\right.
\ee
where $\Delta_A$ is the Laplace operator associated with the magnetic potential $A \in W^{1,\infty}(\Omega)^3$,
\bel{DelA}
\Delta_A:=\sum_{j=1}^3
\para{\pd_{x_j}+i a_j}^2=\Delta+2i A\cdot\nabla+i (\nabla \cdot A) -|A|^2,
\ee
and $q \in L^{\infty}(\Omega)$. We define the Dirichlet-to-Neumann (DN) map associated with \eqref{1.1}, as
\bel{DN}
\Lambda_{A,q}(f) :=(\pd_\nu+ i A \cdot \nu )u,\ f\in L^2(\Sigma),
\ee
where $\nu(x)$ denotes the unit outward normal to $\pd \Omega$ at $x$, and $u$ is the solution to \eqref{1.1}.

In the remaining part of this text, two magnetic potentials $A_j \in W^{1,\infty}(\Omega)^3$, $j=1,2$, are said gauge equivalent, if there exists $\Psi \in W^{2,\infty}(\Omega)$ obeying $\Psi|_{\Gamma}=0$, such that
\bel{gauge} 
A_2 = A_1 + \nabla \Psi.
\ee
In this paper we examine the uniqueness and stability issues in the inverse problem of determining 
the electric potential $q$ and the gauge class of $A$, from the knowledge of $\Lambda_{A,q}$.

\subsection{Physical motivations}

The system \eqref{1.1} describes the quantum motion of a charged particle (the various physical constants are taken equal to $1$) constrained by the unbounded domain $\Omega$, under the influence of the magnetic field generated by $A$, and the electric potential $q$. 
Carbon nanotubes whose length-to-diameter ratio is up to $10^8/1$, are commonly modeled by infinite waveguides such as $\Omega$.
In this context, the inverse problem under consideration in this paper can be rephrased as to whether the strength of the electromagnetic quantum disorder (namely, the magnetic field and the electric impurity potential $q$, see e.g. \cite{[CL],[KBF]}) can be determined by boundary measurement of the wave function $u$. 
\subsection{State of the art}

Inverse coefficients problems for partial differential equations such as the Schr\"odinger equation, are the source of challenging mathematical problems, and have attracted many attention over the last decades. For instance, using the Bukhgeim-Klibanov method (see \cite{[Bukhgeim-Klibanov], [Klibanov], [KT]}), \cite{[Baudouin-Puel]} claims Lipschitz stable determination of the time-independent electric potential perturbing the dynamic (i.e. non stationary) Schr\"odinger equation, from a single boundary measurement of the solution. In this case, the observation is performed on a sub boundary fulfilling the geometric optics condition for the observability, derived by Bardos, Lebeau and Rauch in \cite{[BLR]}. This geometrical condition was removed by \cite{[BC1]} for potentials which are {\it a priori} known in a neighborhood of the boundary, at the expense of weaker stability. In the same spirit, \cite{[CS]} Lipschitz stably determines by means of the Bughkgeim-Klibanov technique, the magnetic potential in the Coulomb gauge class, from a finite number of boundary measurements of the solution. Uniqueness results in inverse problems for the DN map related to the magnetic Schr\"odinger equation are also available in \cite{[Eskin]}, but they are based on a different approach involving geometric optics (GO) solutions. The stable recovery of the magnetic field by the DN map of the dynamic magnetic Schr\"odinger equation is established in \cite{[BC2]} by combining the approach used for determining the potential in hyperbolic equations (see \cite{[Bellassoued],[Bellassoued-Benjoud],[Bell-Jel-Yama1],[IS],[Rakesh-Symes],[SU],[Su1]}) with the one
employed for the idetification of the magnetic field in elliptic equations (see \cite{[DKSU],[Salo],[Su2]}).
Notice that in the one-dimensional case, \cite{[Avdonin-al]} proved by means of the boundary control method introduced by \cite{[Belishev]}, that the DN map uniquely determines the time-independent electric potential of the Schr\"odinger equation. 
In \cite{[BD]} the time-independent electric potential is stably determined by the DN map associated with the dynamic magnetic Schr\"odinger equation on a Riemannian manifold. This result was recently extended by \cite{[Bellassoued2]} to simultaneous determination of both the magnetic field and the electric potential. As for inverse coefficients problems of
the Schr\"odinger equation with either Neumann, spectral, or scattering data, we refer to \cite{[DKSU],[Eskin3],[EskinRal],[Ki1],[KLU],[KU],[Salo],[Salo1],[Su2],[Tzou]}.

All the above mentioned results are obtained in a bounded domain. Actually, there is only a small number of mathematical papers dealing with inverse coefficients problems in unbounded domains. One of them, \cite{[Rakesh1]}, examines the problem of determining a potential appearing in the wave equation in the half-space. Assuming that the potential is known outside a fixed compact set, the author proves that it is uniquely determined by the DN map. 
Unique determination of compactly supported potentials appearing in the stationary Schr\"odinger equation in an infinite slab from partial DN measurements is established in \cite{[LU]}. The same problem is addressed by \cite{[KLU]} for the stationary magnetic Schr\"odinger equation, and by \cite{[Y]} for bi-harmonic operators with perturbations of order zero or one. The inverse problem of determining the twisting function of an infinite twisted waveguide by the DN map, is addressed in \cite{[ChS]}. The analysis carried out in \cite{[IS],[Rakesh-Symes],[SU],[Su1]} is adapted to unbounded cylindrical domains in \cite{[ChS]} for the determination of time-independent potentials with prescribed behavior outside a compact set, by the hyperbolic DN map. In \cite{[KPS2]}, electric potentials with suitable exponential decay along the infinite direction of the waveguide, are stably recovered from a single boundary measurement of the solution. This is by means of a specifically designed Carleman estimate for the dynamic Schr\"odinger equation in infinite cylindrical domains, derived in \cite{[KPS1]}. The geometrical condition satisfied by the boundary data measurements in \cite{[KPS2]} is relaxed in \cite{[BKS]} for potentials which are known in a neighborhood of the boundary.
In \cite{[CKS1]}, time-dependent potentials that are periodic in the translational direction of the waveguide, are stably retrieved by the DN map of the Schr\"odinger equation. In \cite{[KKS]}, periodic potentials are stably retrieved from the asymptotics of the boundary spectral data of the Dirichlet Laplacian. Finally, we refer to \cite{[CKS2],[CKS3]}, for the analysis of the Calder\'on problem in a periodic waveguide.

\subsection{Well-posedness}
We start by examining the well-posedness of the IBVP \eqref{1.1} in the functional space $\mathcal{C}([0,T],H^1(\Omega)) \cap \mathcal{C}^1([0,T],H^{-1}(\Omega))$. 
Namely, we are aiming for sufficient conditions on the coefficients $A$, $q$ and the non-homogeneous Dirichlet data $f$, ensuring that \eqref{1.1} admits a unique solution in the transposition sense.
We say that $u \in L^{\infty}(0,T;H^{-1}(\Omega))$ is a solution to \eqref{1.1} in the transposition sense, if the identity
$$
\langle u, F \rangle_{L^\infty(0,T;H^{-1}(\Omega)),L^1(0,T;H^1_0(\Omega))}=\langle f , \pd_\nu v \rangle_{L^2(\Sigma)},
$$
holds for any $F \in L^1(0,T;H_0^1(\Omega))$. Here $v$ denotes the unique $\mathcal{C}([0,T],H^1(\Omega))$-solution to the transposition system 
\bel{eq3}
\left\{\begin{array}{ll}
(i\pd_t v +\Delta_A +q )v=F, & \mbox{in}\ Q,\\  
v(T,\cdot)=0, & \mbox{in}\ \Omega,\\ 
v=0, & \mbox{on}\ \Sigma.
\end{array}
\right.
\ee
We refer to Subsection \ref{sec-ts} for the full definition and description of transposition solutions to \eqref{1.1}.

Since $\pd \Omega$ is not bounded, we introduce the following notations. First, we set
$$
H^s(\pd \Omega) := H_{x_3}^s(\R,L^2(\pd \omega)) \cap L_{x_3}^2(\R,H^s(\pd \omega)),\ s>0,
$$
where $x_3$ denotes the longitudinal variable of $\Omega$. 
Next, we put
$$
H^{r,s}((0,T)\times X) :=H^r(0,T;L^2(X))\cap L^2(0,T;H^s(X)),\ r ,s >0, 
$$
where $X$ is either $\Omega$ or $\pd \Omega$. For the sake of shortness, we write $H^{r,s}(Q)$ (resp., $H^{r,s}(\Sigma)$) instead of $H^{r,s}((0,T) \times \Omega)$ (resp., $H^{r,s}((0,T) \times \pd \Omega)$). Finally, we define
$$H^{2,1}_0(\Sigma) := \{ f\in H^{2,1}(\Sigma);\ f(0,\cdot)=\pd_t f(0,\cdot)=0 \}, $$
and state the existence and uniqueness result of solutions to \eqref{1.1} in the transposition sense, as follows.

\begin{theorem}
\label{Th0} 
For $M>0$, let $A\in W^{1,\infty}(\Omega,\R)^3$ and $q\in W^{1,\infty}(\Omega,\R)$ satisfy the condition
\bel{c-bded}
\norm{A}_{W^{1,\infty}(\Omega)^3}+\norm{q}_{W^{1,\infty}(\Omega)} \leq M.
\ee
Then, for each $f\in H_0^{2,1}(\Sigma)$, the IBVP \eqref{1.1} admits a unique solution in the transposition sense $u \in H^1(0,T;H^1(\Omega))$, and the estimate
\bel{t1a}
\norm{u}_{H^1(0,T;H^1(\Omega))} \leq C \norm{f}_{H^{2,1}(\Sigma)},
\ee
holds for some positive constant $C$ depending only on $T$, $\omega$ and $M$.
Moreover, the normal derivative $\pd_\nu u \in L^2(\Sigma)$, and we have
\bel{t1b}
\norm{\pd_\nu u}_{L^2(\Sigma)}\leq C \norm{f}_{H^{2,1}(\Sigma)}.
\ee
\end{theorem}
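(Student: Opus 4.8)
The plan is to establish Theorem \ref{Th0} by duality, deriving the transposition solution of \eqref{1.1} from well-posedness and hidden regularity estimates for the backward system \eqref{eq3}. First I would treat the homogeneous forward problem: for the operator $\para{i\pd_t + \Delta_A + q}$ with Dirichlet boundary condition on $\Omega$, the associated time-independent operator $H_{A,q} := -\Delta_A - q$ with domain $H^2(\Omega) \cap H_0^1(\Omega)$ is self-adjoint and bounded from below (here one uses \eqref{c-bded} to control the first-order and zero-order perturbations of $-\Delta$ relative form-boundedly), so Stone's theorem gives a strongly continuous unitary group $e^{-itH_{A,q}}$ and hence, by Duhamel, a unique solution $v \in \mathcal C([0,T],H^2(\Omega) \cap H_0^1(\Omega)) \cap \mathcal C^1([0,T],L^2(\Omega))$ to \eqref{eq3} whenever $F \in L^1(0,T;H_0^1(\Omega))$, with the energy bound $\norm{v}_{\mathcal C([0,T],H^1(\Omega))} \leq C \norm{F}_{L^1(0,T;H^1_0(\Omega))}$. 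The unboundedness of $\Omega$ in the $x_3$-direction does not obstruct this step, since the spectral-theoretic and energy arguments are local in space and the constants depend only on $M$, $\omega$, $T$.

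The second step is the hidden regularity (trace) estimate: for $v$ solving \eqref{eq3} with the above regularity, the normal derivative $\pd_\nu v$ belongs to $L^2(\Sigma)$ with $\norm{\pd_\nu v}_{L^2(\Sigma)} \leq C \norm{F}_{L^1(0,T;H_0^1(\Omega))}$. I would obtain this by the standard multiplier method of Lasiecka--Lions--Triggiani: multiply the equation by $\overline{N \cdot \nabla v}$, where $N$ is a $C^1$ vector field on $\overline\Omega$ coinciding with $\nu$ on $\pd\Omega$ (such an extension exists with $x_3$-independent tangential part, using the $C^2$ regularity of $\pd\omega$ and the cylindrical structure $\Omega = \omega \times \R$), integrate over $Q$, integrate by parts, and absorb the interior terms using the energy estimate from step one; the boundary term produces $\int_\Sigma |\pd_\nu v|^2$. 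Care is needed so that the integration by parts in $x_3$ over all of $\R$ produces no boundary contribution at infinity — this is handled by an approximation/cutoff argument in $x_3$, or by noting that the solution inherits the $L^2$-in-$x_3$ decay of the data through the unitary group. With steps one and two in hand, the map $F \mapsto \langle f, \pd_\nu v\rangle_{L^2(\Sigma)}$ is a bounded conjugate-linear functional on $L^1(0,T;H_0^1(\Omega))$ for each fixed $f \in L^2(\Sigma)$, which defines $u \in L^\infty(0,T;H^{-1}(\Omega))$ as the unique transposition solution, with $\norm{u}_{L^\infty(0,T;H^{-1}(\Omega))} \leq C\norm{f}_{L^2(\Sigma)}$.

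The third step upgrades the regularity of $u$ under the stronger hypothesis $f \in H_0^{2,1}(\Sigma)$, to reach $u \in H^1(0,T;H^1(\Omega))$ and the estimates \eqref{t1a}, \eqref{t1b}. The idea is: (i) lift the boundary data by a trace-right-inverse $R f \in H^{2,1}(Q)$ with $Rf|_\Sigma = f$, $Rf(0,\cdot) = \pd_t Rf(0,\cdot) = 0$ and $\norm{Rf}_{H^{2,1}(Q)} \leq C\norm{f}_{H^{2,1}(\Sigma)}$ (the condition $f \in H_0^{2,1}(\Sigma)$ ensures the compatibility conditions at $t=0$); (ii) set $w := u - Rf$, so that $w$ solves a homogeneous Dirichlet problem with source $G := -(i\pd_t + \Delta_A + q)Rf \in L^2(0,T;L^2(\Omega))$ and, because of the time-derivative of the equation, one also controls $\pd_t w$; (iii) apply the forward well-posedness of step one, plus its time-differentiated version, to get $w \in \mathcal C([0,T],H^1_0(\Omega)) \cap \mathcal C^1([0,T],H^{-1}(\Omega))$ with the $H^1(0,T;H^1)$ bound, hence the same for $u = w + Rf$; (iv) finally \eqref{t1b} follows from the trace estimate of step two applied to $u$ (now genuinely in $H^1(0,T;H^1(\Omega))$, so $\pd_\nu u$ is a bona fide $L^2(\Sigma)$ function), with the constant tracked through all steps to depend only on $T$, $\omega$, $M$.

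The main obstacle I anticipate is the hidden regularity estimate of step two in the unbounded setting: the multiplier identity is delicate because one must justify all integrations by parts over the noncompact domain $\Omega = \omega\times\R$ and verify that no flux escapes at $x_3 = \pm\infty$. The cleanest route is to first prove everything for data supported in $\omega \times (-n,n)$ (where the classical bounded-domain theory applies on $\omega\times(-n-1,n+1)$ with constants uniform in $n$), obtain the estimates with $n$-independent constants, and pass to the limit $n\to\infty$ using the density of such data and the a priori bounds; the cylindrical product structure is exactly what makes the constants uniform. A secondary technical point is the construction of the extension vector field $N$ and the trace right-inverse $R$ respecting the product geometry and the vanishing at $t=0$, but these are routine given the $C^2$ regularity of $\pd\omega$.
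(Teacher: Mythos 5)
Your first two steps follow the paper's route closely: semigroup well-posedness of the homogeneous Dirichlet problem (the paper works with the self-adjoint realizations of $\Delta_A$ both in $L^2(\Omega)$ and in $H^{-1}(\Omega)$ with domain $H^1_0(\Omega)$, which is what actually yields $v\in\cC([0,T],H^1_0(\Omega))$ for $F\in L^1(0,T;H^1_0(\Omega))$ -- your claim that the $L^2$-group gives $v\in\cC([0,T],H^2\cap H^1_0)$ for such $F$ is an overstatement, and the paper only gets $H^2$-regularity for $F\in W^{1,1}(0,T;L^2(\Omega))$, removing this restriction by density at the end of the multiplier computation), then the Lasiecka--Lions--Triggiani multiplier with the purely transverse field $N=(N_1(x'),0)$, then duality. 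That part is sound.

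The genuine gap is in your third step. For $f\in H^{2,1}_0(\Sigma)=H^2(0,T;L^2(\pd\Omega))\cap L^2(0,T;H^1(\pd\Omega))$ with the compatibility conditions, no lifting $Rf$ can do better than $Rf\in L^2(0,T;H^{3/2}(\Omega))$ in the spatial variable, because $f$ is only $L^2$ in time with values in $H^1(\pd\Omega)$ (one would need $f\in L^2(0,T;H^{3/2}(\pd\Omega))$ to reach $L^2(0,T;H^2(\Omega))$). Consequently $\Delta_A Rf\in L^2(0,T;H^{-1/2}(\Omega))$ only, and your source $G=-(i\pd_t+\Delta_A+q)Rf$ is \emph{not} in $L^2(Q)$, so neither part of the homogeneous well-posedness result applies to $w=u-Rf$, and the conclusion $u\in H^1(0,T;H^1(\Omega))$ does not follow as written. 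The missing ingredient is an elliptic bootstrap: one must use the equation to write $-\Delta_A v=iw+qv$ for a suitable intermediate function $v$, control the right-hand side in $L^2(0,T;H^{-1}(\Omega))$ and the Dirichlet trace in $L^2(0,T;H^{1/2}(\pd\Omega))$, and invoke elliptic regularity to climb from $H^{-1}$ to $H^1$ in space. The paper implements this by exploiting the time-independence of $A$ and $q$: it takes the transposition solution $w$ with boundary datum $\pd_t^2 f\in L^2(\Sigma)$ (bounded in $L^\infty(0,T;H^{-1}(\Omega))$ by the duality step), sets $v=\int_0^t w$, which has boundary trace $\pd_t f\in L^2(0,T;H^{1/2}(\pd\Omega))$, applies the elliptic estimate to get $v\in L^2(0,T;H^1(\Omega))$, and finally sets $u=\int_0^t v$ to obtain \eqref{t1a}; the estimate \eqref{t1b} is then re-derived by running the multiplier identity on $u$ itself for smooth $f$ and concluding by density. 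Your remark that ``one also controls $\pd_t w$ by differentiating the equation'' points in the right direction, but without the elliptic regularity step the spatial $H^1$ bound is not reached.
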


It is clear from the definition \eqref{DN} and the continuity property \eqref{t1b}, that the DN map $\Lambda_{A,q}$ belongs to $\cB(H^{2,1}_0(\Sigma), L^2(\Sigma))$, the set of linear bounded operators from $H^{2,1}_0(\Sigma)$ into $L^2(\Sigma)$.

\subsection{Non uniqueness}
There is a natural obstruction to the identification of $A$ by $\Lambda_{A,q}$, arising from the invariance of the DN map under gauge transformation. More precisely, if $\Psi \in W^{2,\infty}(\Omega)$ verifies $\Psi|_{\Gamma}=0$, then we have $u_{A+\nabla \Psi}=e^{-i \Psi}u_A$, where $u_A$ (resp., $u_{A+\nabla \Psi}$) denotes the solution to \eqref{1.1} associated with the magnetic potential $A$ (resp., $A+\nabla \Psi$), $q\in L^\infty(\Omega)$ and $f\in H^{2,1}_0(\Sigma)$. Further, as
$$ (\pd_\nu +i (A + \nabla \Psi) \cdot\nu) u_{A + \nabla \Psi} = e^{-i \Psi} (\pd_\nu +i A \cdot \nu) u_A=(\pd_\nu +i A\cdot\nu) u_A\ \mbox{on}\ \Sigma, $$
by direct calculation, we get that $\Lambda_{A,q}=\Lambda_{A + \nabla \Psi,q}$, despite of the fact that the two potentials $A$ and $A+\nabla \Psi$ do not coincide in $\Omega$ (unless $\psi$ is uniformly zero). 

This shows that the best we can expect from the knowledge of the DN map is to identify $(A,q)$ modulo gauge transformation of $A$. When $A_{|\pd \Omega}$ is known, this may be equivalently reformulated as to whether the magnetic field defined by the 2-form
$$
\d A := \sum_{i,j=1}^3 \para{\pd_{x_j} a_i - \pd_{x_i} a_j} \d x_j \wedge \d x_i,
$$
and the electric potential $q$, can be retrieved by $\Lambda_{A,q}$. This is the inverse problem that we examine in the remaining part of this text.

\subsection{Main results}

We define the set of admissible magnetic potentials as
$$
\cA := \left\{ A=(a_i)_{1 \leq i \leq 3};\ a_1,a_2\in L_{x_3}^\infty(\R, H^2_0(\omega))\cap W^{2,\infty}(\Omega)\ \mbox{and}\ a_3 \in C^3(\overline{\Omega})\ \mbox{satisfies}\ \eqref{decay}-\eqref{bord} \right\}, 
$$
where
\bel{decay}
\sup_{x \in \Omega} \left( \sum_{\alpha \in \mathbb N^3,|\alpha|\leq 3} \langle x_3 \rangle^d |\pd_x^\alpha a_3(x)| \right) <\infty\ \mbox{for some}\ d>1,
\ee
and 
\bel{bord}
\pd^\alpha_x a_3(x)=0,\ x \in \pd \Omega,\ \alpha \in \mathbb N^3\ \mbox{such that}\ |\alpha| \leq2.
\ee
Here $H^2_0(\omega)$ denotes the closure of $\mathcal C^\infty_0(\omega)$ in the $H^2(\omega)$-topology, and $\langle x_3 \rangle :=(1+x_3^2)^{1 \slash 2}$.

The first result of this paper claims stable determination of the magnetic field $\d A$ and unique identification of electric potential $q$, from the knowledge of the full data, i.e. the DN map defined by \eqref{DN}, where both the Dirichlet and Neumann measurements are performed on the whole boundary $\Sigma$.

\begin{theorem}
\label{Th1}
Fix $A_*:=(a_{i,*})_{1 \leq i \leq 3} \in W^{2,\infty}(\Omega,\R)^3$, and for $j=1,2$, let $q_j \in W^{1,\infty}(\Omega)$, and
$A_j:=(a_{i,j})_{1 \leq i \leq 3} \in A_* + \mathcal{A}$, satisfy the condition:
\bel{t3a}
\sum_{i=1}^2 \pd_{x_i} \left( \pd_{x_3} (a_{i,1}-a_{i,2})-\pd_{x_i}(a_{3,1}-a_{3,2})\right) = 0,\ \mbox{in}\ \Omega.
\ee
Then, $\Lambda_{A_1,q_1}=\Lambda_{A_2,q_2}$ yields $(\d A_1,q_1)=(\d A_2,q_2)$.\\
Assume moreover that the estimate
\bel{t3b}
\sum_{j=1}^2 \left(\norm{A_j}_{W^{2,\infty}(\Omega)}+ \norm{q_j}_{W^{1,\infty}(\Omega)}+\norm{e_j}_{W^{3,\infty}(\Omega)}\right)+\norm{A_*}_{W^{2,\infty}(\Omega)} \leq M, 
\ee
holds for some $M>0$, with 
$$e_j(x',x_3):=\int_{-\infty}^{x_3} (a_{3,j}(x',y_3)-a_{3,*}(x',y_3)) \d y_3,\ (x',x_3) \in \Omega.$$ 
Then there exist two constants $\mu_0\in (0,1)$ and $C>0$, both of them depending only on $T$, $\omega$ and $M$, such that we have
\bel{t3bb}
\norm{\d A_1- \d A_2}_{L_{x_3}^\infty(\R,L^2(\omega))}\leq C \norm{\Lambda_{A_1,q_1}-\Lambda_{A_2,q_2}}^{\mu_0}.
\ee
\end{theorem}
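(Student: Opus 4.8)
The plan is to follow the now-classical route for inverse problems of this type: reduce the recovery of $(\d A_j, q_j)$ to an integral-geometric problem along suitable geodesics/lines, and solve the latter by constructing and plugging in geometric-optics (GO) solutions concentrated along these lines. Since $\Omega = \omega \times \R$ is flat, the relevant ``geodesics'' are straight lines, so the integral-geometric datum is essentially a ray transform of the one-form $\d(A_1 - A_2)$ and of the difference $q_1 - q_2$. The extra hypotheses on $\cA$ (compact support in the cross section for $a_1,a_2$, decay and boundary vanishing for $a_3$, together with the compatibility condition \eqref{t3a} and the finiteness of $e_j$) are designed precisely so that (i) the magnetic potentials can be suitably gauged, (ii) the unbounded direction does not spoil the estimates, and (iii) the line transform is injective on the relevant class.

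\textbf{Key steps.} First I would establish the fundamental integral identity: given $\Lambda_{A_1,q_1} = \Lambda_{A_2,q_2}$ (or quantitatively, a small difference in operator norm), for any solution $u_1$ to \eqref{1.1} with potentials $(A_1,q_1)$ and any solution $u_2$ to the backward adjoint problem with potentials $(A_2,q_2)$, one has
$$
\int_Q \Big( 2i (A_1 - A_2)\cdot\nabla u_1 + \big(i\,\nabla\cdot(A_1-A_2) - (|A_1|^2 - |A_2|^2) + (q_1 - q_2)\big) u_1 \Big)\,\overline{u_2}\,\d x\,\d t
$$
is controlled by $\norm{\Lambda_{A_1,q_1} - \Lambda_{A_2,q_2}}$ times the boundary norms of the data, using Theorem \ref{Th0} and an integration by parts against the transposition formulation. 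Second, after the standard gauge reduction (choosing $\Psi$ so that the boundary values of $A_1 - A_2 - \nabla\Psi$ vanish to the needed order, which is possible because $A_j - A_* \in \cA$ and the tangential components vanish on $\pd\Omega$), I would construct GO solutions of the form $u_j \sim e^{\pm i\rho(x\cdot\xi + t|\xi|^2/\ldots)} a_j(x) + r_j$, with amplitudes $a_j$ solving transport equations that encode the complex phase. Following \cite{[BC2],[BD],[Bellassoued2]}, the leading-order term in the identity, as $\rho \to \infty$, isolates the line integral $\int_{\R}\! \big(\,\text{component of } \d(A_1-A_2)\,\big)$ along lines in the hyperplane $\xi^\perp$; a second choice of amplitudes recovers $\int_\R (q_1 - q_2)$ along the same lines. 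Third, from injectivity of the ray transform — here the decay \eqref{decay} in $x_3$ and the compact cross-sectional support of $a_1, a_2$ guarantee the transforms are well-defined and that vanishing of all line integrals forces $\d A_1 = \d A_2$ and $q_1 = q_2$ — one gets uniqueness. For the H\"older stability \eqref{t3bb}, I would keep track of the $\rho$-dependence: the main identity gives, for each frequency, a bound of the shape $|\widehat{(\d A_1 - \d A_2)}(\text{low freq})| \lesssim \rho^{N}\norm{\Lambda_1 - \Lambda_2} + \rho^{-1}$, and optimizing in $\rho$ (a Fourier-splitting / frequency-truncation argument, cf. \cite{[BC2],[KPS2]}) yields the H\"older exponent $\mu_0 \in (0,1)$ in the $L^\infty_{x_3}(\R, L^2(\omega))$ norm, using the a priori bound $M$ from \eqref{t3b} to handle the high frequencies and the $e_j$ term to convert the transform of $a_{3,j}$ back into the magnetic field.

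\textbf{Main obstacle.} The principal difficulty is the unboundedness of $\Omega$ in the $x_3$-direction: the GO solutions must be genuine solutions on the infinite cylinder with quantitative control, and the ray transform is now over full lines in $\R^3$ rather than chords of a bounded domain, so both the construction of remainder terms $r_j$ (which requires solvability estimates uniform in $\rho$ for the magnetic Schr\"odinger operator on $Q$) and the inversion/stability of the line transform need the decay hypotheses \eqref{decay} in an essential way. A secondary technical point is that recovering $\d A$ rather than $A$ forces a careful gauge choice compatible with \eqref{bord} and \eqref{t3a}, and translating the estimate on the transform of $A_1-A_2$ into the stated estimate on $\d A_1 - \d A_2$ in the anisotropic norm $L^\infty_{x_3}(\R,L^2(\omega))$ requires the auxiliary functions $e_j$ and the hypothesis $\norm{e_j}_{W^{3,\infty}(\Omega)}\le M$; handling these two interlocking issues simultaneously is where the bulk of the work lies.
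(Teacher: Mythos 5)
Your overall architecture (orthogonality identity controlled by $\norm{\Lambda_{A_1,q_1}-\Lambda_{A_2,q_2}}$, gauge reduction via the primitives $e_j$ to kill the third component of $A_1-A_2$, GO solutions, ray transform, Fourier splitting to get a H\"older exponent) matches the paper's skeleton, and the orthogonality identity you state is exactly \eqref{5.5}. But the core mechanism you propose --- GO solutions with a genuinely three-dimensional phase $e^{\pm i\rho(x\cdot\xi+\dots)}$ leading to a ray transform ``over full lines in $\R^3$'' --- would fail here, and the reason is not the one you identify. The obstruction is not that the remainders are hard to control on the infinite cylinder; it is that the unknown transverse components $a_1,a_2$ of $A_1-A_2$ are only assumed bounded in $x_3$ (they lie in $L^\infty_{x_3}(\R,H^2_0(\omega))\cap W^{2,\infty}(\Omega)$, with no decay), so line integrals along any direction with a nonzero $x_3$-component diverge, and the Fourier transform in $x_3$ is unavailable. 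Your appeal to the decay hypothesis \eqref{decay} to make the full ray transform well defined does not help: \eqref{decay} concerns only $a_3$, and its sole role is to make the gauge functions $e_j$ finite. The paper's essential idea, which your proposal misses, is a separated-variables GO ansatz \eqref{GO}: the phase $e^{i\sigma(x'\cdot\theta-\sigma t)}$ oscillates only in the transverse variable ($\theta\in\mathbb S^1\subset\R^2$), while the amplitude concentrates near a fixed $x_3$ via $\alpha_\sigma(y_3)=\sigma^\mu\alpha(\sigma^{2\mu}(x_3-y_3))$. This produces only the \emph{partial} X-ray transform along lines lying in transverse planes $\{x_3=\mathrm{const}\}$ (Lemmas \ref{L6.2} and \ref{L7.1}), with $x_3$ as a parameter, which is exactly why the stability estimate comes out in the anisotropic norm $L^\infty_{x_3}(\R,L^2(\omega))$.

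A second, related gap: transverse lines alone cannot see the components $\pd_{x_3}a_i-\pd_{x_i}a_3$ of $\d A$, so you must explain how these are recovered. In the paper this is precisely where hypothesis \eqref{t3a} enters: after the reduction to $A=(a_1,a_2,0)$ it becomes $\nabla_{x'}\cdot\pd_{x_3}A^\sharp=0$, which kills the $\xi'$-parallel part of $\pd_{x_3}\widehat{A^\sharp}(\xi',\cdot)$ (see \eqref{p2}--\eqref{p2b}), leaving only the $\xi'_\perp$-part, which is $|\xi'|^{-1}\pd_{x_3}\widehat{\beta}$ and is controlled by \eqref{7.6aa}. Your proposal treats \eqref{t3a} as a gauge-compatibility condition, which misidentifies its role; without this step the proof of $\d A_1=\d A_2$ (and of \eqref{t3bb}) is incomplete. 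Finally, the uniqueness of $q$ in the paper is not obtained from a ray transform of $q$ extracted at the next order of the same expansion, but by a separate argument: once $\d A=0$ one writes $A=\nabla\Psi$, uses gauge invariance to reduce to $\Lambda_{A_1,q_1}=\Lambda_{A_1,q_2}$, and then applies the orthogonality identity with $A=0$ together with injectivity of the transverse X-ray transform; this two-stage structure is absent from your outline.
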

In \eqref{t3bb} and in the remaining part of this text, $\| \cdot \|$ denotes the usual norm in $\cB(H^{2,1}(\Sigma), L^2(\Sigma))$.
Notice that in Theorem \ref{Th1} we make use of the full DN map, as the magnetic field $\d A$ and the electric potential $q$ are recovered by observing the solution to \eqref{1.1} on the entire lateral boundary $\Sigma$. In this case we may consider general unknown coefficients, in the sense that the behavior of $A$ and $q$ with respect to the infinite variable is not prescribed (we only assume that these coefficients and their derivatives are uniformly bounded in $\Omega$). In order to achieve the same result by measuring on a bounded subset of $\Sigma$ only, we need some extra information on the behavior of the unknown coefficients with respect to $x_3$. Namely, we impose that the strength of the magnetic field generated by $A=(a_i)_{1 \leq i \leq 3}$, reaches its maximum in the bounded subset $(-r,r) \times \omega$ of $\Omega$, for some fixed $r>0$, i.e.
\bel{BB}
\| \pd_{x_i} a_j - \pd_{x_j} a_i \|_{L_{x_3}^\infty(\R,L^2(\omega))} = \| \pd_{x_i} a_j - \pd_{x_j} a_i \|_{L_{x_3}^\infty(-r,r;L^2(\omega))},\ i, j=1,2,3.
\ee
Thus, with reference to \eqref{BB}, we set $\Gamma_r := \pd \omega \times (-r,r)$, introduce the space
$$ H^{2,1}_0((0,T) \times \Gamma_r) :=\{ f \in H^{2,1}(\Sigma); f(0,\cdot)=\pd_t f(0,\cdot)=0\ \mbox{and}\ \supp\ f\subset [0,T] \times \pd \omega \times [-r,r] \}, $$
and define the partial DN map $\Lambda_{A,q,r}$, by
$$ \Lambda_{A,q,r}(f) := (\pd_\nu +i A \cdot \nu) u_{|(0,T) \times \Gamma_r},\  f \in H^{2,1}_0((0,T) \times \Gamma_r), $$
where $u$ denotes the solution to \eqref{1.1}. The following result states for each $r>0$, that the magnetic field induced by potentials belonging (up to an additive $W^{2,\infty}(\Omega,\R)^3$-term) to 
$$\cA_r:=\{ A=(a_i)_{1 \leq i \leq 3} \in \cA\ \mbox{satisfying}\ \eqref{BB} \}, $$ 
can be retrieved from the knowledge of the partial DN map $\Lambda_{A,q,r'}$, provided we have $r'>r$.

\begin{theorem}
\label{Th2}
For $j=1,2$, let $q_j \in W^{1,\infty}(\Omega,\R)$, and let $A_j \in W^{2,\infty}(\Omega,\R)^3$ satisfy $A_1-A_2 \in \cA_r$, for some $r>0$. Suppose that there exists $r'>r$, such that
$\Lambda_{A_1,q_1,r'}=\Lambda_{A_2,q_2,r'}$. Then, we have $\d A_1 = \d A_2$. Furthermore, if
$$
\norm{q_1-q_2}_{L_{x_3}^\infty(\R, H^{-1}(\omega))}=\norm{q_1-q_2}_{L_{x_3}^\infty(-r,r; H^{-1}(\omega))},
$$
we have in addition $q_1 = q_2$.  \\
Assume moreover that \eqref{t3a}-\eqref{t3b} hold. Then, the estimate
\bel{n-t3bb}
\norm{\d A_1 - \d A_2}_{L_{x_3}^\infty(\R, L^2(\omega))^3}\leq C \norm{\Lambda_{A_1,q_1, r'}-\Lambda_{A_2,q_2,r'}}^{\mu_1},
\ee
holds with two constants $C>0$, and $\mu_1 \in (0,1)$, depending only on $T$, $\omega$, $M$, $r$ and $r'$.
\end{theorem}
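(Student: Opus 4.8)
The plan is to adapt the geometric optics (GO) machinery used in the proof of Theorem \ref{Th1} to the partial-data setting, the key point being that the GO solutions entering the fundamental integral identity can be chosen with boundary traces supported — up to errors that vanish in the large parameter — in $\Sigma_{r'} := (0,T)\times\Gamma_{r'}$, so that only the restricted DN maps $\Lambda_{A_j,q_j,r'}$ are needed. Write $A := A_1-A_2 \in \cA_r$ and $q := q_1-q_2$. Since \eqref{BB} gives $\norm{\d A}_{L_{x_3}^\infty(\R,L^2(\omega))} = \norm{\d A}_{L_{x_3}^\infty(-r,r;L^2(\omega))}$, and similarly for $q$ under the additional hypothesis, it is enough to prove $\d A = 0$ (resp. $q = 0$) on the bounded cylinder $(-r,r)\times\omega$.

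First I would construct, for each large $\lambda$, each $j\in\{1,2\}$, and suitable transversal parameters, GO solutions $u_{j,\lambda}$ of \eqref{1.1} with coefficients $(A_j,q_j)$ of the type used in Theorem \ref{Th1}, but whose spatial profile is concentrated, in the translational variable $x_3$, inside a slab $\{|x_3| < r+\varepsilon\}$ with $r+\varepsilon < r'$. Multiplying by a cut-off $\chi \in \mathcal{C}_0^\infty((-r',r'))$ equal to $1$ on $[-r,r]$ renders their boundary traces supported in $\Sigma_{r'}$, at the cost of a correction that is controlled via the well-posedness estimates \eqref{t1a}--\eqref{t1b} of Theorem \ref{Th0} and vanishes as $\lambda\to\infty$; writing $f_j := (\chi u_{j,\lambda})|_\Sigma$ and denoting by $\tilde u_{j,\lambda}$ the solution of \eqref{1.1} with Dirichlet data $f_j$, Theorem \ref{Th0} yields $\tilde u_{j,\lambda} = u_{j,\lambda} + o(1)$ in the relevant norm.

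Then, with $u := \tilde u_{2,\lambda}$ (forward problem, coefficients $(A_2,q_2)$, data $f_2$) and $v := v_{1,\lambda}$ a GO solution of the backward adjoint problem for $(A_1,q_1)$ with similarly localized data $g_1$ supported in $\Sigma_{r'}$, subtracting the equations and integrating by parts as in the proof of Theorem \ref{Th1} produces an identity of the form
\[
\int_Q \Big( 2 i\, A\cdot\nabla u + i(\nabla\cdot A)\, u + (|A_2|^2-|A_1|^2+q)\, u \Big)\, \overline{v}\, \d x\, \d t = \big\langle (\Lambda_{A_1,q_1,r'}-\Lambda_{A_2,q_2,r'}) f_2 , \overline{g_1} \big\rangle_{L^2(\Sigma_{r'})} + o(1),
\]
the boundary pairing being legitimately restricted to $\Sigma_{r'}$ because $\supp f_2 \subset \Sigma_{r'}$. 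Inserting the explicit GO expansions and letting $\lambda\to\infty$, the right-hand side vanishes when $\Lambda_{A_1,q_1,r'}=\Lambda_{A_2,q_2,r'}$, while the left-hand side yields, exactly as in Theorem \ref{Th1} (after the gauge normalization and the use of \eqref{t3a} to recover all components), the vanishing of the cross-sectional Fourier transform of $\d A$ on $(-r,r)\times\omega$; hence $\d A = 0$ there, and \eqref{BB} upgrades this to $\d A_1 = \d A_2$ in $\Omega$. Once $A_1$ and $A_2$ are gauge-equivalent, reducing to $A_1 = A_2$ and re-running the identity isolates $q$ on $(-r,r)\times\omega$, so $q = 0$ there, and the supplementary hypothesis on $\norm{q_1-q_2}_{L_{x_3}^\infty}$ gives $q_1 = q_2$ in $\Omega$.

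For the H\"older estimate \eqref{n-t3bb}, I would keep $\lambda$ finite in the identity above: after extraction of the leading term, its left-hand side bounds the low transversal frequencies of $\d A$ on $(-r,r)\times\omega$ modulo an error $O(\lambda^{-\kappa})$ (from the GO remainders, under \eqref{t3a}--\eqref{t3b}) plus a term $\le C e^{C\lambda}\norm{\Lambda_{A_1,q_1,r'}-\Lambda_{A_2,q_2,r'}}$ (from the boundary pairing, the exponential factor accounting for the growth of the GO solutions). A low-versus-high transversal frequency split of $\d A$, the choice $\lambda \sim \log\big(1/\norm{\Lambda_{A_1,q_1,r'}-\Lambda_{A_2,q_2,r'}}\big)$, and interpolation against the a priori bound \eqref{t3b} then yield \eqref{n-t3bb} with $\mu_1\in(0,1)$ depending only on $T,\omega,M,r,r'$; the bound is transferred from the slab to all of $\R$ by \eqref{BB}. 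The step I expect to be the genuine obstacle is the construction and control of the $x_3$-localized GO solutions: one must check that installing the cut-off $\chi$ in the buffer region $r<|x_3|<r'$ — where, by \eqref{BB}, the magnetic field does not attain its maximal strength — disturbs neither the leading amplitude carrying $\d A$ nor the remainder estimates required both for the limit $\lambda\to\infty$ and for the quantitative analysis, which is exactly where the strict inequality $r'>r$ is used.
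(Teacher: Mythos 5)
Your overall strategy coincides with the paper's: localize the GO amplitudes in the translational variable $x_3$ inside $(-r',r')$, so that the boundary traces $f_\sigma$, $g_\sigma$ lie in $H^{2,1}_0((0,T)\times\Gamma_{r'})$ and only the partial DN map enters the orthogonality identity \eqref{5.5}; then run the Fourier/X-ray analysis of Lemma \ref{L7.1} for center points $x_3\in(-r,r)$ and use \eqref{BB} to pass from the slab back to all of $\R$. One implementation remark: the cut-off multiplication you introduce is unnecessary, and it is precisely the place where you anticipate trouble. In the paper's construction the $x_3$-localization is already built into the amplitude through the factor $\alpha_\sigma(y_3)=\sigma^\mu\alpha(\sigma^{2\mu}(x_3-y_3))$ of \eqref{def-alpha}--\eqref{7.1}, whose support has width $\sigma^{-2\mu}$; taking the center $x_3\in(-r,r)$ and $\sigma>(r'-r)^{-24}$ puts that support inside $(-r',r')$, and since the remainder $\psi_{j,\sigma}$ vanishes identically on $\Sigma$ (Proposition \ref{pr-4.1}), the trace of $u_{j,\sigma}$ is exactly the trace of the already-localized principal part. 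No commutator errors arise and no comparison $\tilde u_{j,\lambda}=u_{j,\lambda}+o(1)$ is needed; your anticipated obstacle simply does not occur.

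The genuine gap is in the quantitative step: as written it cannot deliver the H\"older estimate \eqref{n-t3bb}. You bound the boundary pairing by $Ce^{C\lambda}\norm{\Lambda_{A_1,q_1,r'}-\Lambda_{A_2,q_2,r'}}$ and then choose $\lambda\sim\log\big(1/\norm{\Lambda_{A_1,q_1,r'}-\Lambda_{A_2,q_2,r'}}\big)$; combined with a remainder $O(\lambda^{-\kappa})$, this optimization yields only a logarithmic rate $\big(\log(1/\delta)\big)^{-\kappa}$, not a power $\delta^{\mu_1}$. The exponential growth you posit does not occur for these GO solutions: the phase $e^{i\sigma(x'\cdot\theta-\sigma t)}$ is unimodular, and all relevant norms of $f_\sigma$ and $g_\sigma$ grow only polynomially in $\sigma$ (see \eqref{a20}, where the pairing is bounded by $C\sigma^{5}\norm{\Lambda_{A_1,q_1}-\Lambda_{A_2,q_2}}\cN_{\theta,\sigma}(\phi_1)\cN_{\theta,\sigma}(\phi_2)$). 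With the correct polynomial bound $C(\sigma^{6}\delta+\sigma^{-\epsilon})$ of Lemma \ref{L7.1}, the calibration $\sigma=\rho^{8/\epsilon}$, $\rho=\delta^{-2/(M_\epsilon+2)}$ carried out in Subsection \ref{sec-stability-th1} produces the H\"older exponent $\mu_1$; you should replace your exponential/logarithmic calibration by this polynomial one.
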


We stress out that Theorem \ref{Th2} applies not only to magnetic (resp., electric) potentials $A_j$ (resp., $q_j$), $j=1,2$, which coincide outside $\omega \times (-r,r)$, but to a fairly more general class of magnetic potentials, containing, e.g., $2r$-periodic potentials with respect to $x_3$. More generally, if $g \in W^{2,\infty}(\R,\R_+)$ (resp. $g \in W^{1,\infty}(\R,\R_+)$) is an even and non-increasing function in $\R_+$, then it is easy to see that potentials of the form $g \times A_j$ (resp., $g \times q_j$), where $A_j$ (resp., $q_j$) are suitable $2r$-periodic magnetic (resp., electric) potentials with respect to $x_3$, fulfill the conditions of Theorem \ref{Th2}.

Notice that the absence of stability for the electric potential $q$, manifested in both Theorems \ref{Th1} and \ref{Th2}, arises from the infinite extension of the spatial domain $\Omega$ in the $x_3$ direction. Indeed, the usual derivation of a stability equality for $q$, from estimates such as \eqref{t3bb} or \eqref{n-t3bb}, requires that the differential operator $\d$ be invertible in $\Omega$. Such a property is true in bounded domains (see e.g. \cite{[Tzou]}), but, to the best of our knowledge, it is not known whether it can be extended to unbounded waveguides. One way to overcome this technical difficulty is to impose certain gauge condition on the magnetic potentials, by prescribing their divergence.
In this case, we establish in Theorem \ref{Th22}, below, that the electric and magnetic potentials can be simultaneously and stably determined by the DN map.

\subsubsection{Simultaneous stable recovery of magnetic and electric potentials}  
We first introduce the set of divergence free transverse magnetic potentials,
$$
\cA_0 := \{ A = (a_1,a_2,0);\ a_1,a_2 \in L_{x_3}^\infty(\R, H^2_0(\omega))\cap W^{2,\infty}(\Omega),\ \pd_{x_1} a_1+ \pd_{x_2} a_2=0\ \mbox{in}\ \Omega \},
$$
in such a way that we have $\nabla \cdot A = \nabla \cdot A_*$ for any $A \in A_*+\cA_0$. Here $A^* \in W^{2,\infty}(\Omega)^3$ is an arbitrary fixed magnetic potential.
Since identifying $A \in A_*+\cA_0$ from the knowledge of the DN map, amounts to determining the magnetic field $\d A$, we have the following result.

\begin{theorem}
\label{Th22} 
Let $M>0$, and let $A_* \in W^{2,\infty}(\Omega,\R)^3$. For $j=1,2$, let $q_j \in W^{1,\infty}(\Omega,\R)$, and let $A_j \in A_*+\cA_0$ satisfy \eqref{t3b}. Then, there exist two constant $\mu_2 \in (0,1)$ and $C=C(T,\omega,M)>0$, such that we have
\bel{t22a}
\norm{A_1-A_2}_{L_{x_3}^\infty(\R,L^2(\omega))^3} + \norm{q_1-q_2}_{L_{x_3}^\infty(\R,H^{-1}(\omega))} \leq C\norm{\Lambda_{A_1,q_1}-\Lambda_{A_2,q_2}}^{\mu_2}.
\ee
Assume moreover that the two following conditions
\bel{Ma}
\norm{A_1-A_2}_{L_{x_3}^\infty(\R,L^2(\omega))^3}=\norm{A_1-A_2}_{L_{x_3}^\infty(-r,r;L^2(\omega))^3},
\ee
and 
\bel{el}
\norm{q_1-q_2}_{L_{x_3}^\infty(\R,H^{-1}(\omega))}=\norm{q_1-q_2}_{L_{x_3}^\infty(-r,r;H^{-1}(\omega))},
\ee
hold simultaneously for some $r>0$. Then, for each $r'>r$, we have
\bel{t22c}
\norm{A_1-A_2}_{L_{x_3}^\infty(\R,L^2(\omega))}+\norm{q_1-q_2}_{L_{x_3}^\infty(\R,H^{-1}(\omega))} \leq C \norm{\Lambda_{A_1,q_1, r'}-\Lambda_{A_2,q_2,r'}}^{\mu_2},
\ee
where $C$ is a positive constant depending only on $T$, $\omega$, $M$, $r$ and $r'$.
\end{theorem}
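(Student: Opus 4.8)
The plan is to deduce the magnetic part of \eqref{t22a} from Theorem~\ref{Th1}, by reconstructing $A_1-A_2$ itself from its magnetic field --- which is now possible because the divergence of $A_1-A_2$ is prescribed, namely zero --- and to deduce the electric part from the bound on the zeroth-order coefficient that is established in the course of proving Theorem~\ref{Th1}.

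First I would note that Theorem~\ref{Th1} applies: writing $A:=A_1-A_2$, its third component vanishes (both $A_1$ and $A_2$ have third component $a_{3,*}$), so $A=(a_1,a_2,0)\in\cA_0\subset\cA$, the last inclusion holding because the vanishing third component trivially fulfils \eqref{decay}--\eqref{bord}; the functions $e_j$ appearing in \eqref{t3b} vanish identically, so \eqref{t3b} is nothing but the hypothesis made here; and \eqref{t3a} reduces to $\pd_{x_3}(\pd_{x_1}a_1+\pd_{x_2}a_2)=0$, which holds by the divergence-free condition defining $\cA_0$. Thus \eqref{t3bb} gives
\[
\norm{\d A}_{L_{x_3}^\infty(\R,L^2(\omega))}\leq C\norm{\Lambda_{A_1,q_1}-\Lambda_{A_2,q_2}}^{\mu_0}.
\]
Next I would recover $A$ from $\d A$ by a stream-function argument, performed for a.e.\ $x_3\in\R$ on the cross-section $\omega$: the transverse field $(a_1(\cdot,x_3),a_2(\cdot,x_3))$ is divergence free on the simply connected domain $\omega$ and has vanishing trace on $\pd\omega$, hence equals $(\pd_{x_2}\phi,-\pd_{x_1}\phi)$ where $\phi(\cdot,x_3)\in H^1_0(\omega)$ solves the two-dimensional Dirichlet problem $-\Delta_{x'}\phi=\pd_{x_1}a_2-\pd_{x_2}a_1$, the right-hand side being, up to a sign, a component of $\d A$. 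The energy estimate for $-\Delta_{x'}$ and the Poincar\'e inequality on $\omega$, both uniform in $x_3$, then give $\norm{A(\cdot,x_3)}_{L^2(\omega)}=\norm{\nabla_{x'}\phi(\cdot,x_3)}_{L^2(\omega)}\leq C\norm{\d A(\cdot,x_3)}_{L^2(\omega)}$; taking the essential supremum over $x_3$ yields
\[
\norm{A_1-A_2}_{L_{x_3}^\infty(\R,L^2(\omega))^3}\leq C\norm{\Lambda_{A_1,q_1}-\Lambda_{A_2,q_2}}^{\mu_0}.
\]

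To recover the electric potential I would revisit the geometric-optics/orthogonality identity underlying the proof of Theorem~\ref{Th1}: besides the magnetic field, that argument controls, with the same H\"older power of $\norm{\Lambda_{A_1,q_1}-\Lambda_{A_2,q_2}}$, the difference of the zeroth-order coefficients of $\Delta_{A_1}+q_1$ and $\Delta_{A_2}+q_2$, that is $(q_1-q_2)+i\,\nabla\cdot(A_1-A_2)-(|A_1|^2-|A_2|^2)$, measured in $L_{x_3}^\infty(\R,H^{-1}(\omega))$. Here $\nabla\cdot(A_1-A_2)=0$, while $|A_1|^2-|A_2|^2=(A_1-A_2)\cdot(A_1+A_2)$ is controlled in $L_{x_3}^\infty(\R,L^2(\omega))\hookrightarrow L_{x_3}^\infty(\R,H^{-1}(\omega))$ by $C\norm{\Lambda_{A_1,q_1}-\Lambda_{A_2,q_2}}^{\mu_0}$, thanks to the previous step and to the uniform bound on $A_1+A_2$ provided by \eqref{t3b}. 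Subtracting gives $\norm{q_1-q_2}_{L_{x_3}^\infty(\R,H^{-1}(\omega))}\leq C\norm{\Lambda_{A_1,q_1}-\Lambda_{A_2,q_2}}^{\mu_2}$ for a suitable $\mu_2\in(0,1)$, which together with the magnetic estimate is \eqref{t22a}. For \eqref{t22c} I would re-run the same three steps with the localized geometric-optics construction of Theorem~\ref{Th2}: the hypotheses \eqref{Ma} and \eqref{el}, which say that the $L^\infty$-norms in $x_3$ of $A_1-A_2$ and of $q_1-q_2$ are attained on $\omega\times(-r,r)$ with $r<r'$, are exactly what is needed to upgrade the local-in-$x_3$ estimates furnished by the partial DN maps $\Lambda_{A_j,q_j,r'}$ to the global ones in \eqref{t22c}; they play the role played in Theorem~\ref{Th2} by \eqref{BB} and by the localization hypothesis on the electric potential.

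The genuinely delicate ingredients --- the construction of suitable geometric-optics solutions and the companion Carleman estimate in the waveguide --- are already packaged in Theorems~\ref{Th1} and \ref{Th2}, so I expect the crux of the present proof to be the second step, the reconstruction of $A$ from $\d A$. As observed after Theorem~\ref{Th2}, that is where the unbounded geometry bites: $\d$ is not known to be boundedly invertible on $\Omega$, so a stability bound for $\d A$ does not by itself yield one for $A$, and hence none for $q$. Prescribing the divergence removes this obstruction by turning the reconstruction of $A$ into a parametrized family of uniformly well-posed two-dimensional Poisson problems; the rest is the routine but necessary bookkeeping ensuring that all constants and H\"older exponents stay uniform in $x_3$, and, in the partial-data case, that \eqref{Ma}--\eqref{el} indeed match the localization requirements behind Theorem~\ref{Th2}.
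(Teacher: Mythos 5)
Your treatment of the magnetic part is essentially sound and is a legitimate variant of what the paper does: instead of your cross-sectional stream-function/Dirichlet-problem argument, the paper exploits the divergence-free condition on the Fourier side, showing $\widehat{A^\sharp}(\xi',x_3)\cdot\xi'=0$ and hence $|\xi'|\,\widehat{A^\sharp}(\xi',x_3)=-i\widehat{\beta}(\xi',x_3)$, and then reruns the frequency-splitting optimization of Subsection \ref{sec-stability-th1} with $\widehat{A^\sharp}$ in place of $\pd_{x_3}\widehat{a_j}$. Both routes convert the bound on $\widehat{\beta}$ from Lemma \ref{L7.1} into \eqref{t22e}, and your verification that \eqref{t3a} and the hypotheses of Theorem \ref{Th1} are met for $A_j\in A_*+\cA_0$ is correct.

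The genuine gap is in the electric part. You assert that the orthogonality identity "underlying the proof of Theorem \ref{Th1}" already controls the zeroth-order coefficient $(q_1-q_2)+i\nabla\cdot A-(|A_1|^2-|A_2|^2)$ in $L^\infty_{x_3}(\R,H^{-1}(\omega))$ with a H\"older power of the DN gap. No such estimate is established there: the proof of Theorem \ref{Th1} only yields \emph{uniqueness} of $q$ (via injectivity of the X-ray transform, after first reducing to $A_1=A_2$ so that the first-order term disappears), and the paper explicitly remarks that no stability for $q$ is obtained in Theorems \ref{Th1}--\ref{Th2}. The obstruction you do not address is the first-order term $2i\langle A\cdot\nabla u_{2,\sigma},u_{1,\sigma}\rangle_{L^2(Q)}$ in \eqref{5.5}: paired with the GO solutions it is of order $\sigma$ relative to $\langle q u_{2,\sigma},u_{1,\sigma}\rangle_{L^2(Q)}$, so it dominates the quantity you want to extract, and bounding it only through $\norm{A}_{L^\infty_{x_3}(\R,L^2(\omega))}$ (as your step for $|A_1|^2-|A_2|^2$ would suggest) is not enough. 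The paper's proof handles this by first upgrading the already-proved bound \eqref{t22e} to a smallness estimate in the \emph{full} sup norm, $\norm{A}_{L^\infty(\Omega)^3}\leq C\norm{\Lambda_{A_1,q_1}-\Lambda_{A_2,q_2}}^{\mu_1\slash p}$, via the two-dimensional Sobolev embedding $W^{1,p}(\omega)\hookrightarrow L^\infty(\omega)$ and interpolation between $W^{2,p}(\omega)$ and $L^p(\omega)$; only then can the term $\sigma^{8\mu+1}\norm{A}_{L^\infty(\Omega)^3}$ in \eqref{t22h} be absorbed and the $\rho,\sigma$-optimization in \eqref{p19} closed. This Sobolev-interpolation step, together with the new choice of test functions \eqref{def-phik} (with the phase correction $A^\sharp_\sigma$ removed so that the plain Fourier transform $\widehat{q}$ is produced), is the actual content of the proof of \eqref{t22a} for $q$, and it is missing from your proposal.
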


\subsubsection{Comments}
The key ingredient in the analysis of the inverse problem under examination is a suitable set of GO solutions to the magnetic Schr\"odinger equation appearing in \eqref{1.1}. These functions are specifically designed for the waveguide geometry of $\Omega$, in such a way that the unknown coefficients can be recovered by a separation of variables argument.
More precisely, we seek GO solutions that are functions of $x=(x',x_3) \in \Omega$, but where the transverse variable $x' \in \omega$ and the translational variable $x_3 \in \R$ are separated. This approach was already used in \cite{[Ki]}, for determining zero order unknown coefficients of the wave equation. Since we consider first order unknown coefficients in this paper, the main issue here is to take into account both the cylindrical shape of $\Omega$ and the presence of the magnetic potential, in the design of the GO solutions.

When the domain $\Omega$ is bounded, we know from \cite{[BC2]} that the magnetic field $\d A$ is uniquely determined by the DN map associated with \eqref{1.1}. The main achievement of the present paper is to extend the above statement to unbounded cylindrical domains. Actually, we also improve the results of \cite{[BC2]} in two directions. First, we prove simultaneous determination of the magnetic field $\d A$ and the electric potential $q$. Second, the regularity condition imposed on admissible magnetic potentials entering the Schr\"odinger equation of \eqref{1.1}, is weakened from $W^{3,\infty}(\Omega)$ to $W^{2,\infty}(\Omega)$.

To our best knowledge, this is the first mathematical paper claiming identification by boundary measurements, of non-compactly supported magnetic field and electric potential. Moreover, in contrast to the other works \cite{[BKS],[CKS1],[KPS1]} dealing with the stability issue of inverse problems for the Schr\"odinger equation in an infinite cylindrical domain, available in the mathematics literature, here we no longer require that the various unknown coefficients be periodic, or decay exponentially fast, in the translational direction of the waveguide. 

Finally, since the conditions \eqref{BB} and \eqref{Ma}-\eqref{el} are imposed in $\omega \times (-r,r)$ only, and since the solution to \eqref{1.1} lives in the infinitely extended cylinder $(0,T) \times \Omega$, we point out that the results of Theorems \ref{Th2} and \ref{Th22}
cannot be derived from similar statements derived in a bounded domain.

\subsection{Outlines}
The paper is organized as follows. In Section 2 we examine the forward problem associated with \eqref{1.1}, by rigorously defining the transposition solutions to \eqref{1.1}, and proving Theorem \ref{Th0}. In Section 3, we build the GO solutions to the Schr\"odinger equation appearing in \eqref{1.1}, which are the key ingredient in the analysis of the inverse problem carried out in the two last sections of this paper. In Section 4, we estimate the X-ray transform of first-order partial derivatives of the transverse magnetic potential, and the Fourier transform of the aligned magnetic field, in terms of the DN map. Finally, Section 5 contains the proofs of Theorems \ref{Th1}, \ref{Th2}, and \ref{Th22}.

\section{Analysis of the forward problem}
\setcounter{equation}{0}

In this section we study the forward problem associated with \eqref{1.1}, that is, we prove the statement of Theorem \ref{Th0}. Although this problem is very well documented when $\Omega$ is bounded (see e.g. \cite{[BC2]}), to our best knowledge, it cannot be directly derived from any published mathematical work in the framework of the unbounded waveguide $\Omega$ under consideration in this paper.

The proof of Theorem \ref{Th0}, which is presented in Subsection \ref{sec-prT0}, deals with transposition solutions to \eqref{1.1}, that are rigorously defined in Subsection \ref{sec-ts}. As a preliminary, we start by examining in Subsection \ref{sec-elliptic}, the elliptic part of the dynamic magnetic Schr\"odinger operator appearing in \eqref{1.1}, and we establish an existence and uniqueness result for the corresponding system in Subsection \ref{sec-eur}.

\subsection{Elliptic magnetic Schr\"odinger operator}
\label{sec-elliptic}
For $A \in W^{1,\infty}(\Omega,\R)^3$, we set $\nabla_A:=\nabla+iA$, where $i A$ denotes the multiplier by $i A$, and notice for all $u \in H^1(\Omega)$, that
\bel{coercive}
| \nabla_A u(x) |^2 \geq (1-\eps) | \nabla u(x) | + (1-\eps^{-1}) | Au(x) |^2,\ \eps >0,\ \ x \in \Omega.
\ee
Next, for $q \in L^\infty(\Omega;\R)$, we introduce the sesquilinear form
$$
\mathbf{h}_{A,q}(u,v) :=\int_\Omega\nabla_A u(x)\cdot\overline{\nabla_A v}(x) \d x -\int_\Omega q(x) u(x) \overline{v}(x) \d x,\ u,v\in \D(\mathbf{h}_{A,q}):=H^1_0(\Omega),
$$
and consider the self-adjoint operator $\sH_{A,q}$ in $L^2(\Omega)$, generated by $\mathbf{h}_{A,q}$. In light of \cite[Proposition 2.5]{[KPS2]}, $\sH_{A,q}$ acts on its domain $\D(\sH_{A,q}):=H_0^1(\Omega) \cap H^2(\Omega)$, as the operator $-(\Delta_A+q)$, where $\Delta_A:= \nabla_A \cdot \nabla_A$ is expressed by \eqref{DelA}.

Further, for all $x \in \Omega$ fixed, taking $\epsilon=|A(x)|^2 \slash (1+|A(x)|^2)$ in \eqref{coercive}, we get that $| \nabla_A u(x) |^2 \geq | \nabla u(x) |^2 \slash (1 + |A(x)|^2) - |u(x)|^2$, whence
$$
\mathbf{h}_{A,0}(u,u) + \norm{u}_{L^2(\Omega)}^2 \geq \frac{\norm{\nabla u}^2_{L^2(\Omega)^3}}{1+\norm{A}_{L^\infty(\Omega)}^2},\ u \in H^1_0(\Omega),
$$
where $\mathbf{h}_{A,0}$ stands for $\mathbf{h}_{A,q}$ when $q$ is uniformly zero.
Thus, we deduce from the Poincar\'e inequality and Lax Milgram's theorem, that for any $v \in H^{-1}(\Omega)$, there exists a unique $\phi_v \in H^1_0(\Omega)$ satisfying
\bel{surjective}
-\Delta_A \phi_v+\phi_v=v.
\ee
Next, for $u$ and $v$ in $H^{-1}(\Omega)$, we put
$$
\langle u , v \rangle_{-1} :=\re\left(\int_\Omega \nabla_A \phi_u(x) \cdot \overline{\nabla_A \phi_v}(x) \d x + \int_\Omega \phi_u(x) \overline{\phi_v}(x) \d x \right),
$$
and check that the space $H^{-1}(\Omega)$, endowed with the above scalar product, is Hilbertian. Having said that, we may now prove the following technical result.

\begin{lemma}
\label{ll1} 
For each $A \in W^{1,\infty}(\Omega,\R)^3$, the linear operator $\sB_A:=\Delta_A$, with domain $\D(\sB_A):=H^1_0(\Omega)$, is self-adjoint and negative in $H^{-1}(\Omega)$.
\end{lemma}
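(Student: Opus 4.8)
The plan is to verify the two defining properties of $\sB_A$ — symmetry together with the range condition that upgrades it to self-adjointness, and negativity — directly from the scalar product $\langle \cdot,\cdot\rangle_{-1}$ on $H^{-1}(\Omega)$. The central computational device is the following duality identity: for $u \in H^1_0(\Omega) = \D(\sB_A)$ and $v \in H^{-1}(\Omega)$, the element $\phi_v \in H^1_0(\Omega)$ from \eqref{surjective} is the Riesz representative of $v$, so that, using the definition of $\langle \cdot,\cdot \rangle_{-1}$ and the fact that $\sB_A u = \Delta_A u$ means $\phi_{\sB_A u}$ solves $-\Delta_A \phi_{\sB_A u}+\phi_{\sB_A u} = \Delta_A u$, one obtains $\phi_{\sB_A u} = -u + \phi_u$ (apply $(-\Delta_A+1)$ to the right-hand side and use $-\Delta_A u + u = (-\Delta_A+1)\phi_u - \Delta_A u$... more cleanly: $(-\Delta_A+1)(-u+\phi_u) = \Delta_A u - u + v + \phi_u - \phi_u = \Delta_A u$ where $v=(-\Delta_A+1)\phi_u$; since $-\Delta_A u + \Delta_A u$ cancels appropriately, $\phi_{\sB_A u}=-u+\phi_u$). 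Hence for $u,w \in H^1_0(\Omega)$,
$$
\langle \sB_A u, w \rangle_{-1} = \re\!\left( \int_\Omega \nabla_A(\phi_u - u)\cdot\overline{\nabla_A \phi_w}\,\d x + \int_\Omega (\phi_u-u)\overline{\phi_w}\,\d x \right) = \langle u,w\rangle_{-1} - \re\,\mathbf{h}_{A,0}\!\left(u+\int_\Omega u\overline{\phi_w},\,\cdot\right),
$$
which I would rearrange using $-\Delta_A \phi_w + \phi_w = w$ (tested against $u$) to get the symmetric expression $\langle \sB_A u, w\rangle_{-1} = -\re\big( \mathbf{h}_{A,0}(u,w) + \int_\Omega u\,\overline{w}\,\big)$ — wait, I should be careful to land on something manifestly symmetric in $u,w$. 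The correct target identity, which I would establish by this computation, is
$$
\langle \sB_A u, w\rangle_{-1} = -\,\re\!\left( \int_\Omega \nabla_A u \cdot \overline{\nabla_A w}\,\d x \right), \qquad u,w \in H^1_0(\Omega),
$$
i.e. $\langle \sB_A u, w\rangle_{-1} = -\,\re\,\mathbf{h}_{A,0}(u,w)$. Symmetry of $\sB_A$ is then immediate since the right-hand side is Hermitian-symmetric, and negativity is immediate since $\langle \sB_A u, u\rangle_{-1} = -\int_\Omega |\nabla_A u|^2\,\d x \leq 0$.

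Next I would promote symmetry to self-adjointness. Because $\sB_A$ is symmetric and negative, it suffices (by the basic criterion for essential self-adjointness / self-adjointness of semibounded symmetric operators) to show that $\mathrm{Ran}(\sB_A - I) = H^{-1}(\Omega)$, or equivalently that $\mathrm{Ran}(I - \sB_A)$ is all of $H^{-1}(\Omega)$. Given $g \in H^{-1}(\Omega)$, solving $(I-\sB_A)u = g$ with $u \in H^1_0(\Omega)$ means, after applying the representation above and the isomorphism $(-\Delta_A+1): H^1_0(\Omega)\cap H^2(\Omega)\to L^2(\Omega)$ together with \eqref{surjective}, reducing to the weak problem $\mathbf{h}_{A,0}(u,w) + 2\int_\Omega u\overline w = \langle g, \text{(rep.)}\rangle$ for all $w$; the bilinear form $\mathbf{h}_{A,0}(u,w) + 2\int_\Omega u\overline w$ is bounded and coercive on $H^1_0(\Omega)$ by the coercivity estimate recalled just before the lemma (the one giving $\mathbf{h}_{A,0}(u,u)+\|u\|^2 \geq \|\nabla u\|^2/(1+\|A\|_\infty^2)$), so Lax–Milgram yields a unique solution $u \in H^1_0(\Omega) = \D(\sB_A)$. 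This establishes the range condition. A cleaner route, which I would actually prefer in the write-up: observe that $\sB_A = -\sH_{A,0} \cdot (\text{something})$ is unitarily related to $\sH_{A,0}$ through the map $v \mapsto \phi_v$, which is by construction a unitary from $(H^{-1}(\Omega), \langle\cdot,\cdot\rangle_{-1})$ onto $(H^1_0(\Omega), \mathbf{h}_{A,0}+\langle\cdot,\cdot\rangle)$; under this unitary $\sB_A$ corresponds to the operator $\phi \mapsto \Delta_A \phi$ on $H^1_0(\Omega)$ with the graph-norm-type inner product, whose self-adjointness follows from that of $\sH_{A,0} = -\Delta_A$ in $L^2(\Omega)$ (known from \cite[Proposition 2.5]{[KPS2]}) by a second, simpler change of inner product. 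Either way, self-adjointness is reduced to properties of $\sH_{A,0}$ already in hand.

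I expect the main obstacle to be purely bookkeeping: getting the duality identity $\langle \sB_A u, w\rangle_{-1} = -\re\,\mathbf{h}_{A,0}(u,w)$ exactly right, since it requires tracking the relation between $\phi_{\sB_A u}$ and $\phi_u$, correctly using \eqref{surjective} both to define $\phi_u$ and to "integrate by parts" the $\nabla_A$-pairing, and being attentive that $-\Delta_A$ is only formally self-adjoint on $H^1_0(\Omega)$ (the genuine integration-by-parts identity $\int \nabla_A u\cdot\overline{\nabla_A w} = -\int (\Delta_A u)\overline w$ holds for $u\in H^2\cap H^1_0$, $w\in H^1_0$, which is the regularity we have). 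Once that identity is in place, symmetry, negativity, and — via Lax–Milgram on the coercive form $\mathbf h_{A,0}+2\langle\cdot,\cdot\rangle$ — the essential-range condition for self-adjointness all follow with no further difficulty. No Carleman-type or PDE-technical ingredient is needed here; the lemma is an abstract operator-theoretic consequence of the elliptic theory set up in this subsection.
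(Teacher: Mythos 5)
Your overall architecture matches the paper's: compute $\phi_{\sB_A u}$ from \eqref{surjective}, extract a symmetric expression for $\langle \sB_A u,w\rangle_{-1}$, deduce dissipativity, and upgrade symmetry to self-adjointness via surjectivity of $1-\sB_A$ (which, note, is literally the content of \eqref{surjective}, so no separate Lax--Milgram step is needed there). However, the identity you announce as your ``correct target'', namely $\langle \sB_A u, w\rangle_{-1} = -\re\, \mathbf{h}_{A,0}(u,w)$, is false, and your own computation in fact disproves it. From your (correct) relation $\phi_{\sB_A u}=\phi_u-u$ one gets $\langle \sB_A u,w\rangle_{-1} = \langle u,w\rangle_{-1} - \re\left(\mathbf{h}_{A,0}(u,\phi_w)+\langle u,\phi_w\rangle_{L^2(\Omega)}\right)$, and testing $-\Delta_A\phi_w+\phi_w=w$ against $u$ turns the bracketed group into $\re\langle u,w\rangle_{L^2(\Omega)}$ --- not into $\re\,\mathbf{h}_{A,0}(u,w)$, precisely because $\phi_w\neq w$. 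The correct identity, which is the one the paper establishes, is $\langle \sB_A u,w\rangle_{-1} = -\re\langle u,w\rangle_{L^2(\Omega)} + \langle u,w\rangle_{-1}$. A sanity check with $A=0$ and a Dirichlet eigenfunction $e_k$, $-\Delta e_k=\lambda_k e_k$, gives $\langle \Delta e_k,e_k\rangle_{-1}=-\lambda_k/(1+\lambda_k)$, whereas your formula would give $-\lambda_k$.

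This error propagates. Your one-line proof of negativity, $\langle\sB_A u,u\rangle_{-1}=-\int_\Omega|\nabla_A u|^2\,\d x$, is not available; with the correct identity, negativity reads $\langle\sB_A u,u\rangle_{-1}=\langle u,u\rangle_{-1}-\|u\|_{L^2(\Omega)}^2$, and one must additionally verify that $\langle u,u\rangle_{-1}\leq\|u\|_{L^2(\Omega)}^2$ (the paper does this via $\langle u,u\rangle_{-1}=\re\langle\phi_u,u\rangle_{L^2(\Omega)}$, the bound $\|\phi_u\|_{L^2(\Omega)}^2\leq\langle u,u\rangle_{-1}$, and Cauchy--Schwarz); this step is absent from your write-up. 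Likewise, the coercive weak problem you propose to feed into Lax--Milgram for the range condition is derived from the wrong identity, and the ``cleaner route'' via a unitary $v\mapsto\phi_v$ is only gestured at. Symmetry happens to survive, since both the true and the false right-hand sides are symmetric in $u$ and $w$, and all the ingredients for the correct identity are already in your hands, so the argument is repairable --- but as written the central computation, and everything downstream of it, is wrong.
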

\begin{proof} 
We proceed as in the proof of \cite[Proposition 2.6.14 and Corollary 2.6.15]{[CH]}. Namely, we pick $u$ and $v$ in $\cC^\infty_0(\Omega)$, and write
$$
\langle \sB_A u, v \rangle_{-1}=\langle w , v \rangle_{-1}+ \langle u , v\rangle_{-1},
$$
with $w:=\sB_A u-u$. Taking into account that $\phi_w=-u$, we obtain that
\bel{a0}
\langle \sB_A u,v \rangle_{-1}=-\re \left( \int_\Omega \nabla_A u(x) \cdot \overline{\nabla_A \phi_v}(x) \d x+ \int_\Omega u(x) \overline{\phi_v}(x) \d x \right)+ \langle u,v \rangle_{-1}.
\ee
Next, integrating by parts, we get that
$$
-\re \left( \int_\Omega \nabla_A u(x) \cdot \overline{\nabla_A \phi_v}(x) \d x + \int_\Omega u(x) \overline{\phi_v}(x) \d x\right)=-\re \langle u,-\Delta_A \phi_v+\phi_v \rangle_{L^2(\Omega)}=-\re \langle u,v \rangle_{L^2(\Omega)},
$$
so \eqref{a0} yields
\bel{2.22}
\langle \sB_A u , v \rangle_{-1}=-\re \langle u , v \rangle_{L^2(\Omega)} + \langle u , v \rangle_{-1}.
\ee
Further, since $\langle u , u \rangle_{-1}=\re \langle \phi_u,(-\Delta_A+1)\phi_u \rangle_{L^2(\Omega)}=\re \langle \phi_u, u \rangle_{L^2(\Omega)}$ 
and $\norm{\phi_u}_{L^2(\Omega)}^2 \leq \langle u , u \rangle_{-1}$,
we see that $\langle u , u \rangle_{-1} \leq \norm{u}_{L^2(\Omega)}^2$. Therefore, we obtain
\bel{dissipative}
\langle \sB_A u,u \rangle_{-1}=-\norm{u}_{L^2(\Omega)}^2+ \langle u , u \rangle_{-1} \leq 0,
\ee
by taking $v=u$ in \eqref{2.22}.

By density of $\mathcal C^\infty_0(\Omega)$ in $H^1_0(\Omega)$, both estimates \eqref{2.22} and \eqref{dissipative} remain valid for all $u$ and $v$ in $H^1_0(\Omega)$. As a consequence, the operator $\sB_A$ is dissipative. Furthermore, $1-\sB_A$ being surjective from $H^1_0(\Omega)$ onto $H^{-1}(\Omega)$, by \eqref{surjective}, we get that
$\sB_A$ is $m$-dissipative.
Moreover, as it follows readily from \eqref{2.22} that
$$
\langle \sB_A u , v \rangle_{-1}=\langle u , \sB_A v \rangle_{-1},\ u,v\in H^1_0(\Omega),
$$
we see that the graph of $\sB_A$ is contained into the one of its adjoint $\sB_A^*$. Therefore, $\sB_A$ is self-adjoint, in virtue of \cite[Corollary 2.4.10]{[CH]}.
\end{proof}

\subsection{Existence and uniqueness result}
\label{sec-eur}

For further use, we establish the following existence and uniqueness result for the system
\bel{eq2}
\left\{\begin{array}{ll}
(i\pd_t +\Delta_A + q ) v = F, & \mbox{in}\ Q,\\ 
 v(0,\cdot)=0, &\mbox{in}\ \Omega,\\ 
 v=0, &\mbox{on}\ \Sigma,
\end{array}
\right.
\ee
with homogeneous Dirichlet boundary condition and suitable source term $F$.

\begin{lemma}
\label{ll2} 
Let $M$, $A$ and $q$ be the same as in Theorem \ref{Th0}. 
\begin{enumerate}[(i)]
\item Assume that $F \in L^1(0,T;H_0^1(\Omega))$. Then, the system
\eqref{eq2} admits a unique solution $v \in \cC([0,T],H_0^1(\Omega))$, satisfying
\bel{ll2z}
\| v \|_{\cC([0,T],H^1(\Omega)} \leq C \| F \|_{L^1(0,T;H^1(\Omega))},
\ee
for some constant $C>0$, depending only on $T$, $\omega$ and $M$.
\item If $F \in W^{1,1}(0,T;L^2(\Omega))$, then \eqref{eq2} admits a unique solution
$$ v \in \cZ:= \cC^1([0,T],L^2(\Omega)) \cap \cC([0,T],H_0^1(\Omega) \cap H^2(\Omega)), $$
and there exists $C=C(T,\omega,M)>0$, such that
$$
\norm{v}_{\cZ} \leq C \norm{F}_{W^{1,1}(0,T;L^2(\Omega))}.
$$
\end{enumerate}
\end{lemma}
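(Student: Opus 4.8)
The plan is to solve \eqref{eq2} by semigroup methods, on the basis of Lemma \ref{ll1}. Since $\Delta_A$, with domain $H^1_0(\Omega)$, is self-adjoint and negative in $H^{-1}(\Omega)$, Stone's theorem shows that $i\Delta_A$ generates a strongly continuous group of unitary operators on $H^{-1}(\Omega)$. As $q \in W^{1,\infty}(\Omega)$, multiplication by $q$ is a bounded operator on $H^{-1}(\Omega)$ of norm not exceeding $C\norm{q}_{W^{1,\infty}(\Omega)} \leq CM$, so the bounded perturbation theorem yields a $C_0$-group $\{ S(t) \}_{t \in \R}$ on $H^{-1}(\Omega)$, generated by $i(\Delta_A+q)$ with domain $H^1_0(\Omega)$, and obeying $\norm{S(t)}_{\cB(H^{-1}(\Omega))} \leq e^{CM|t|}$. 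The solution of \eqref{eq2} is then produced by the Duhamel formula $v(t) := -i \int_0^t S(t-s) F(s) \, \d s$, the analysis being carried out in $H^1_0(\Omega)$ for (i) and in $L^2(\Omega)$ for (ii).

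Next I would record two stability features of $\{ S(t) \}$. First, on $H^1_0(\Omega) = \D(i(\Delta_A+q))$ equipped with the graph norm $u \mapsto \norm{u}_{H^{-1}(\Omega)} + \norm{(\Delta_A+q)u}_{H^{-1}(\Omega)}$, the group $\{S(t)\}$ restricts to a $C_0$-group with $\norm{S(t)}_{\cB(H^1_0(\Omega))} \leq e^{CM|t|}$; here the graph norm is equivalent to $\norm{\cdot}_{H^1(\Omega)}$, the nontrivial inequality $\norm{u}_{H^1(\Omega)} \leq C(\norm{u}_{H^{-1}(\Omega)} + \norm{(\Delta_A+q)u}_{H^{-1}(\Omega)})$ following from the identity $(1-\Delta_A)u = -(\Delta_A+q)u + (1+q)u$, the isomorphism property of $1-\Delta_A \colon H^1_0(\Omega) \to H^{-1}(\Omega)$ recorded in \eqref{surjective}, and the $H^{-1}(\Omega)$-boundedness of multiplication by $1+q$. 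Second, on $L^2(\Omega)$ the operator $i(\Delta_A+q)$ coincides with $-i\sH_{A,q}$, which is skew-adjoint by \cite[Proposition 2.5]{[KPS2]}, so a density argument shows that $\{S(t)\}$ leaves $L^2(\Omega)$ invariant and restricts there to the unitary group generated by $-i\sH_{A,q}$. Finally, using \eqref{DelA}, the elliptic a priori estimate $\norm{u}_{H^2(\Omega)} \leq C(\norm{\Delta u}_{L^2(\Omega)} + \norm{u}_{L^2(\Omega)})$ valid on $H^1_0(\Omega) \cap H^2(\Omega)$ with a constant depending only on $\omega$ since $\pd\omega$ is $C^2$, and an interpolation inequality to absorb the first-order terms, the graph norm of $\sH_{A,q}$ on $\D(\sH_{A,q}) = H^1_0(\Omega) \cap H^2(\Omega)$ is equivalent to $\norm{\cdot}_{H^2(\Omega)}$, with constants depending only on $\omega$ and $M$.

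With these ingredients, case (i) is routine: for $F \in L^1(0,T;H^1_0(\Omega))$ the Duhamel integral defines $v \in \cC([0,T],H^1_0(\Omega))$ with $\norm{v(t)}_{H^1(\Omega)} \leq \int_0^t \norm{S(t-s)}_{\cB(H^1_0(\Omega))} \norm{F(s)}_{H^1(\Omega)}\,\d s \leq e^{CMT}\norm{F}_{L^1(0,T;H^1(\Omega))}$, which is \eqref{ll2z}; this $v$ is the mild solution of the $H^{-1}(\Omega)$-valued problem $v' = i(\Delta_A+q)v - iF$, $v(0)=0$, hence it solves \eqref{eq2}, the equation holding in $\cC([0,T],H^{-1}(\Omega))$, with $v(0,\cdot)=0$ and $v=0$ on $\Sigma$ since $v(t) \in H^1_0(\Omega)$. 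For case (ii), given $F \in W^{1,1}(0,T;L^2(\Omega))$, the standard regularity theory for inhomogeneous linear evolution equations (see e.g.~\cite{[CH]}), applied to the unitary group generated by $-i\sH_{A,q}$ on $L^2(\Omega)$, shows that the same Duhamel integral defines $v \in \cC^1([0,T],L^2(\Omega)) \cap \cC([0,T],\D(\sH_{A,q}))$, i.e. $v \in \cZ$ by the norm equivalence above, together with $\norm{v}_{\cZ} \leq C\norm{F}_{W^{1,1}(0,T;L^2(\Omega))}$. In both cases uniqueness is immediate: any solution in the stated class is in particular a solution of the corresponding abstract linear Cauchy problem, hence coincides with the mild solution given by the Duhamel formula.

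The main obstacle is not the abstract well-posedness, which follows from Lemma \ref{ll1} and standard semigroup theory, but the passage from the abstract statements to the Sobolev-space estimates of the lemma: one must identify the graph norm of $i(\Delta_A+q)$ on $H^{-1}(\Omega)$ with $\norm{\cdot}_{H^1(\Omega)}$, and the graph norm of $\sH_{A,q}$ on $L^2(\Omega)$ with $\norm{\cdot}_{H^2(\Omega)}$, with constants uniform over the admissible coefficients, i.e. depending only on $T$, $\omega$ and $M$. This rests on the coercivity inequality \eqref{coercive}, the surjectivity statement \eqref{surjective}, and elliptic regularity up to the $C^2$ boundary $\pd\Omega$.
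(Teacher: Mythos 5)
Your proof is correct and follows essentially the same route as the paper: the paper invokes \cite[Lemma 2.1]{[CKS1]} (an abstract Duhamel/perturbation result for $m$-dissipative generators) with $X=H^{-1}(\Omega)$, $U=i\sB_A$ for part (i) and $X=L^2(\Omega)$, $U=-i\sH_{A,q}$ for part (ii), which is precisely the semigroup construction you carry out by hand. The only cosmetic difference is that you absorb the multiplication by $q$ as a bounded perturbation of the generator on the base space, whereas the paper feeds it into the cited lemma as a term $B(t)=iq$ bounded on the domain; both hinge on the same self-adjointness facts from Lemma \ref{ll1} and Subsection \ref{sec-elliptic} and on identifying the graph norms with the $H^1$ and $H^2$ norms.
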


\begin{proof}
The proof boils down on the following statement, borrowed from \cite[Lemma 2.1]{[CKS1]}.

Let $X$ be a Banach space, $U$ be a m-dissipative operator in $X$ with dense domain $D(U)$
and $B \in \cC([0,T],\cB(D(U)))$. Then for all $v_0 \in D(U)$ and
$f\in \cC([0,T],X)\cap L^1(0,T;D(U))$ $($resp. $f\in W^{1,1}(0,T;X))$ there is a unique solution
$v \in Z_0=\cC([0,T], D(U))\cap C^1([0,T],X)$ to the following Cauchy problem
$$
\left\{
\begin{array}{ll}
v'(t)=U v(t)+ B(t)v(t)+f(t),
\\
v(0)=v_0,
\end{array}
\right.
$$
such that
$$
\|v\|_{Z_0} =\|v\|_{C^0([0,T],D(U))}+\|v\|_{C^1([0,T],X)} \leq C(\|v_0\|_{D(U)}+\|f\|_\ast).
$$
Here $C$ is some positive constant depending only on $T$ and $\|B\|_{C ([0,T],\cB(D(U)))}$, and 
$\|f\|_\ast$ stands for the norm $\| f \|_{C([0,T], X)\cap L^1(0,T;D(U))}$ $($resp. $\|f\|_{W^{1,1}(0,T;X)})$.

Notice that the operator $i \sB_{A}$ is skew-adjoint, since $\sB_{A}$ is self-adjoint in $H^{-1}(\Omega)$. Hence $i \sB_{A}$ is $m$-dissipative with dense domain in $H^{-1}(\Omega)$. Further, the multiplier by $i q$ being bounded in $\cC[0,T],H_0^1(\Omega)$, we obtain (i) 
by applying the above result with $X=H^{-1}(\Omega)$, $U=i \sB_{A,q}$, $f=i F$, $B(t)=i q$, and $v_0=0$.

Similarly, as $\sH_{A,q}$ is self-adjoint in $L^2(\Omega)$, then the operator $-i \sH_{A,q}$ is $m$-dissipative with dense domain in $L^2(\Omega)$, so we derive (ii) by applying
\cite[Lemma 2.1]{[CKS1]} with $X=L^2(\Omega)$, $U=-i \sH_{A,q}$, $f=i F$, $B(t)=0$, and $v_0=0$. 
\end{proof}

\begin{remark}
\label{rmk-eur}
Since $w(t,x):=\overline{v(T-t,x)}$, for $(t,x) \in Q$, is solution to
\bel{eq2t}
\left\{\begin{array}{ll}
(i\pd_t +\Delta_A + q ) w = F, & \mbox{in}\ Q,\\ 
w(T,\cdot)=0, &\mbox{in}\ \Omega,\\ 
w=0 &\mbox{on}\ \Sigma,
\end{array}
\right.
\ee
whenever $v$ is solution to the IBVP \eqref{eq2}, where the function $(t,x) \mapsto \overline{F(T-t,x)}$ is substituted for $F$, we infer from Lemma \ref{ll2} that the transposed system \eqref{eq2t} admits a unique solution $w$ in $\cC^0([0,T],H_0^1(\Omega))$ (resp., $\cZ$) provided $F$ is in $L^1(0,T;H_0^1(\Omega))$ (resp., $W^{1,1}(0,T;L^2(\Omega))$).
\end{remark}

\subsection{Transposition solutions}
\label{sec-ts}
As a preamble to the definition of transposition solutions to \eqref{1.1}, we establish that the normal derivative of the $\cC([0,T],H_0^1(\Omega))$-solution to \eqref{eq2} is lying in $L^2(\Sigma)$.

\begin{lemma}
\label{ll4} 
Let $M$, $A$ and $q$ be as in Lemma \ref{ll2}. 
Then, the linear map $F \mapsto \pd_\nu v$, where $v$ denotes the $\cC([0,T],H_0^1(\Omega))$-solution to \eqref{eq2} associated with $F \in L^1(0,T;H_0^1(\Omega))$, given by Lemma \ref{ll2}, is bounded from $L^1(0,T;H_0^1(\Omega))$ into $L^2(\Sigma)$.
\end{lemma}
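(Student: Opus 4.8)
The plan is to establish the so‑called hidden regularity (or trace regularity) estimate for the magnetic Schr\"odinger equation by a multiplier argument adapted to the unbounded cylinder $\Omega = \omega \times \R$. First I would reduce to the case of a smooth, compactly supported source term: pick $F \in \cC^\infty_0((0,T) \times \Omega)$, let $v$ be the corresponding solution furnished by Lemma \ref{ll2} (which then lies in $\cZ$, so in particular $\pd_\nu v$ makes classical sense), and plan to derive the bound $\norm{\pd_\nu v}_{L^2(\Sigma)} \leq C \norm{F}_{L^1(0,T;H^1(\Omega))}$ with $C = C(T,\omega,M)$; the general case $F \in L^1(0,T;H_0^1(\Omega))$ then follows by density, since the estimate \eqref{ll2z} guarantees the map $F \mapsto v$ is continuous and the a priori trace bound lets one pass to the limit of $\pd_\nu v$ in $L^2(\Sigma)$.

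The heart of the argument is the multiplier identity. I would choose a vector field $\eta \in \cC^2(\overline{\omega};\R^2)$ on the cross–section with $\eta = \nu'$ on $\pd\omega$ (where $\nu'$ is the outward unit normal of $\pd\omega$), extend it to $\Omega$ independently of $x_3$, and multiply the equation $(i\pd_t + \Delta_A + q)v = F$ by $\overline{\eta \cdot \nabla_A v}$ (or by $\overline{\eta \cdot \nabla v}$ together with lower‑order corrections), then integrate over $Q$ and take real parts. Integrating by parts in $x'$ produces, on $\Sigma$, the positive boundary term $\tfrac12 \int_\Sigma |\pd_\nu v|^2 \, \d\sigma \d t$ (using the homogeneous Dirichlet condition $v|_\Sigma = 0$, so that $\nabla_{\Sigma} v = 0$ and $\nabla v = (\pd_\nu v)\nu$ on $\Sigma$, and $\eta\cdot\nu = 1$ there); the remaining interior terms involve $\nabla_A v$, $\pd_t v$, the magnetic coefficients $A$ (and $\nabla\cdot A$, hence the need for $A \in W^{1,\infty}$), the potential $q$, and $F$. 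All interior terms are to be bounded, using Cauchy–Schwarz and the uniform bound \eqref{c-bded}, by $C\big(\norm{v}_{\cC([0,T],H^1(\Omega))}^2 + \norm{F}_{L^1(0,T;H^1(\Omega))} \norm{v}_{\cC([0,T],H^1(\Omega))}\big)$; combined with \eqref{ll2z} this yields the claimed estimate. Because $\eta$ and $A$ do not depend on $x_3$ (or are uniformly controlled in $x_3$), every integration by parts in the $x_3$ variable either vanishes or is absorbed harmlessly, so the unboundedness of $\Omega$ in the $x_3$‑direction causes no boundary contribution at infinity.

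The main obstacle I anticipate is precisely making the integration by parts rigorous on the non‑compact domain $\Omega$: one must justify that there are no contributions "from $x_3 = \pm\infty$" and that all integrals converge. For $F$ smooth and compactly supported, the solution $v$ is not itself compactly supported in $x_3$ (the Schr\"odinger flow has infinite speed of propagation), so I would handle this by a cutoff argument — introduce $\chi_R(x_3) = \chi(x_3/R)$ with $\chi \in \cC^\infty_0(\R)$, $\chi \equiv 1$ on $[-1,1]$, run the multiplier computation with $\chi_R \eta$ in place of $\eta$, and control the commutator terms (which carry a factor $R^{-1}\chi'(x_3/R)$) by the global $\cC([0,T],H^1(\Omega))$ bound on $v$, so that they vanish as $R \to \infty$. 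This reduces the whole argument to a finite‑$R$ computation that is entirely standard, at the cost of one extra limiting step; the $x'$‑regularity of $\eta$ (namely $\eta \in \cC^2$, which exists since $\pd\omega \in C^2$) is exactly what is needed to control the terms where derivatives fall on $\eta$.
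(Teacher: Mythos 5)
Your strategy is essentially the paper's: a Rellich-type multiplier $N=(N_1(x'),0)$ with $N_1=\nu_1$ on $\pd\omega$, applied first to a regularized source (the paper takes $F\in W^{1,1}(0,T;L^2(\Omega))$ so that $v\in\cZ$ by Lemma \ref{ll2}(ii); your choice $F\in\cC_0^\infty$ works equally well), followed by density in $L^1(0,T;H_0^1(\Omega))$ using \eqref{ll2z}. Your cutoff $\chi_R(x_3)$ is an unnecessary (though harmless) precaution: since the multiplier has no $x_3$-component, the Green formula is only invoked on the bounded cross-section $\omega$, and the integrations by parts in $x_3$ involve products of functions lying in $H^1(\R_{x_3})$ for a.e.\ $x'$, which carry no contribution at infinity. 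The reduction to $A=0$, $q=0$ that the paper performs at the outset is implicit in your "lower-order corrections."

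The one step that would fail as written is the claim that \emph{all} interior terms can be bounded by Cauchy--Schwarz by $C\big(\norm{v}_{\cC([0,T],H^1(\Omega))}^2+\norm{F}_{L^1(0,T;H^1(\Omega))}\norm{v}_{\cC([0,T],H^1(\Omega))}\big)$. The term $\re\langle i\pd_t v, N\cdot\nabla v\rangle_{L^2(Q)}$ cannot be estimated this way: for the regularized data $\pd_t v$ is controlled only in $\cC([0,T],L^2(\Omega))$ through $\norm{F}_{W^{1,1}(0,T;L^2(\Omega))}$, a norm that does not survive the passage to the limit in $L^1(0,T;H_0^1(\Omega))$, so a direct Cauchy--Schwarz bound yields a constant that blows up under your density argument. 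The paper's (standard) remedy is to integrate by parts in $t$ and in $x'$, exploiting $v_{|\Sigma}=0$, so that $2\re\langle i\pd_t v, N\cdot\nabla v\rangle_{L^2(Q)}$ collapses to $i\langle v(T,\cdot),N\cdot\nabla v(T,\cdot)\rangle_{L^2(\Omega)}-\langle(\nabla\cdot N)v, i\pd_t v\rangle_{L^2(Q)}$, and then to substitute the equation $i\pd_t v=F-\Delta v$ in the remaining term; only after this does everything reduce to quantities controlled by $\norm{v}_{\cC([0,T],H^1(\Omega))}$ and $\norm{F}_{L^1(0,T;H^1(\Omega))}$. Add this step and your outline matches the paper's proof.
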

\begin{proof} 
Since $\norm{v}_{\cC([0,T],H^1(\Omega))} \leq C \norm{F}_{L^1(0,T;H^1(\Omega))}$, by \eqref{ll2z}, we may assume without loss of generality that $A=0$ and $q=0$.

Assume that $F \in W^{1,1}(0,T;L^2(\Omega))$ in such a way $v \in \cZ$, in virtue of Lemma \ref{ll2}.
Let $N_1 \in \cC^2(\overline{\omega})^2$ satisfy $N_1=\nu_1$ on $\pd \omega$, where $\nu_1$ denotes the unit outward
normal vector to $\pd \omega$. Put $N(x',x_3):=(N_1(x'),0)$ for all $x'\in\omega$ and  $x_3\in\R$, so that $N \in \cC^2(\overline{\Omega})^3\cap W^{2,\infty}(\Omega)^3$ verifies
$N=\nu$ on $\pd \Omega$. Then, we have
\bel{ll4a}
\langle i \pd_t v+\Delta  v , N \cdot \nabla v \rangle_{L^2(Q)} = \langle F , N \cdot \nabla v \rangle_{L^2(Q)}.
\end{equation}
By integrating by parts with respect to $t$, we get 
\bea
\langle \pd_t v , N \cdot \nabla v \rangle_{L^2(Q)} & = & \langle v(T,\cdot) , N \cdot \nabla v(T,\cdot) \rangle_{L^2(\Omega)} - \langle v , N \cdot \nabla \pd_t v \rangle_{L^2(Q)}\nonumber\\
& = & \langle v(T,\cdot) , N \cdot \nabla v(T,\cdot) \rangle_{L^2(\Omega)} + \langle  N \cdot \nabla v , \pd_t v\rangle_{L^2(Q)} - I,
\label{a1}
\eea
where $I := \int_Q N \cdot \nabla (v \overline{\pd_t v}) \d x \d t$. Taking into account that  $N \cdot \nabla = N_1 \cdot \nabla_{x'}$, where $\nabla_{x'}$ denotes the gradient operator with respect to $x' \in \omega$, we have $I=\int_Q N_1 \cdot \nabla_{x'} (v \overline{\pd_t v}) \d x \d t$, hence
\bea
I & = &  \int_{Q} \nabla_{x'} \cdot (v(t,x) \overline{\pd_t v}(t,x) N_1(x')) \d x' \d x_3 \d t - \langle (\nabla \cdot N) v , \overline{v} \pd_t v \rangle_{L^2(Q)} \nonumber \\
& = & \int_{\Sigma} v(t,x) \overline{\pd_t v}(t,x) N_1(x') \cdot \nu_1(x') \d x' \d x_3 \d t - \langle (\nabla \cdot N) v , \pd_t v \rangle_{L^2(Q)} \nonumber \\
& = & - \langle (\nabla \cdot N) v , \pd_t v \rangle_{L^2(Q)}, \label{a2}
\eea
by Green's formula, since $v_{\vert \Sigma}=0$. Putting \eqref{a1}-\eqref{a2} together, we obtain that
\bea
& & 2 \re \langle i \pd_t v , N \cdot \nabla v \rangle_{L^2(Q)} \nonumber \\
& = & i \langle  v(T,\cdot) , N \cdot \nabla v(T,\cdot) \rangle_{L^2(\Omega)}  - \langle (\nabla \cdot N) v , i \pd_t v \rangle_{L^2(Q)} \nonumber \\
& = & i \langle  v(T,\cdot) , N \cdot \nabla v(T,\cdot) \rangle_{L^2(\Omega)} + \langle (\nabla \cdot N) v , \Delta v \rangle_{L^2(Q)} - \langle (\nabla \cdot N) v , F \rangle_{L^2(Q)}. \label{ll4b}
\eea
Applying the Green formula with respect to $x' \in \omega$ and integrating by parts with respect to $x_3 \in \R$, we find that
$$
\langle (\nabla \cdot N) v , \Delta v \rangle_{L^2(Q)} = 
- \langle (\nabla \cdot N) \nabla v , \nabla v \rangle_{L^2(Q)^3} - \langle v \nabla (\nabla \cdot N), \nabla v \rangle_{L^2(Q)^3},
$$
so \eqref{ll4b} entails
\beas
2 \re \langle i \pd_t v , N \cdot \nabla v \rangle_{L^2(Q)} 
&=& i \langle  v(T,\cdot) , N \cdot \nabla v(T,\cdot) \rangle_{L^2(\Omega)}  - \langle (\nabla \cdot N) \nabla v , \nabla  v \rangle_{L^2(Q)^3} \\
& & - \langle v \nabla (\nabla \cdot N), \nabla v \rangle_{L^2(Q)^3} - \langle (\nabla \cdot N) v , F \rangle_{L^2(Q)}.
\eeas
This and \eqref{ll2z} yield
\beas
\abs{\re \langle i \pd_t v , N \cdot \nabla v \rangle_{L^2(Q)}} & \leq &  C \norm{v}_{\cC([0,T],H^1(\Omega))} \left( \norm{v}_{\cC([0,T],H^1(\Omega))} + \norm{F}_{L^1(0,T;H^1(\Omega))} \right) \\
& \leq & C\norm{F}_{L^1(0,T;H^1(\Omega))}^2.
\eeas
From this and \eqref{ll4a}, it then follows that
\bel{ll4c}
\abs{\re \langle \Delta  v , N \cdot \nabla v \rangle_{L^2(Q)}} \leq  C \norm{F}_{L^1(0,T;H^1(\Omega))}^2
\ee
On the other hand, we get upon applying the Green formula with respect to $x' \in \omega$ and integrating by parts with respect to $x_3 \in \R$, that
\bea
\langle \Delta  v , N \cdot \nabla v \rangle_{L^2(Q)} & = & - \langle \nabla v , \nabla (N \cdot \nabla v) \rangle_{L^2(Q)^3}
+ \langle \nabla v \cdot \nu , N \cdot \nabla v \rangle_{L^2(\Sigma)} \nonumber \\
& = & - \langle \nabla v , \nabla (N \cdot \nabla v) \rangle_{L^2(Q)^3} + \| \pd_\nu v \|_{L^2(\Sigma)}^2. \label{a3}
\eea
Moreover, since
$\re \left( \nabla v \cdot \nabla (N \cdot \overline{\nabla v}) \right) = \re \left( (H \nabla v) \cdot \overline{\nabla v} \right) +\frac{1}{2} N \cdot \nabla \abs{\nabla v}^2$ with $H:=(\pd_{x _i} N_j)_{1 \leq i , j \leq 3}$ and $N:=(N_j)_{1 \leq j \leq 3}$,
we infer from \eqref{a3} that
\bel{a4}
\re \langle \Delta  v , N \cdot \nabla v \rangle_{L^2(Q)} 
= \| \pd_\nu v \|_{L^2(\Sigma)}^2 - \re \langle H \nabla v , \nabla v \rangle_{L^2(Q)^3}
-\frac{1}{2} \int_Q N \cdot \nabla \abs{\nabla v}^2 \d x \d t. 
\ee
Further, by applying once more the Green formula with respect to $x' \in \omega$, we find for a.e. $(t,x_3) \in (0,T) \times \R$, that
\bea
& & \int_\omega N(x',x_3) \cdot \nabla \abs{\nabla v(t,x',x_3)}^2 \d x'= \int_\omega N_1(x') \cdot \nabla_{x'} \abs{\nabla v(t,x',x_3)}^2 \d x' \nonumber \\
& = & \| \nabla v(t,\cdot,x_3) \|_{L^2(\pd \omega)^3}^2 - \langle (\nabla \cdot N) \nabla v(t,\cdot,x_3) , \nabla v(t,\cdot,x_3) \rangle_{L^2(\omega)^3}. \label{a5}
\eea
Bearing in mind that $v_{\vert \Sigma}=0$, we have $\abs{\nabla v}^2=\abs{\pd_\nu v}^2$ on $\Sigma$, so we deduce from \eqref{a5} that
$$
\int_Q N \cdot \nabla \abs{\nabla v}^2 \d x \d t = \| \pd_\nu v \|_{L^2(\Sigma)}^2 - \langle (\nabla \cdot N) \nabla v , \nabla v \rangle_{L^2(Q)^3}.
$$
From this and \eqref{a4}, it then follows that
$$
\| \pd_\nu v \|_{L^2(\Sigma)}^2 = 2 \re \langle  \Delta v ,   N \cdot \nabla v \rangle_{L^2(Q)} + 2 \re \langle H \nabla v , \nabla v \rangle_{L^2(Q)^3}
- \langle (\nabla \cdot N) \nabla v , \nabla v \rangle_{L^2(Q)^3},
$$
and hence
$$
\norm{\pd_\nu v}_{L^2(\Sigma))}\leq C \left( \norm{F}_{L^1(0,T;H^1(\Omega))}+\norm{v}_{\mathcal{C}([0,T],H^1(\Omega))} \right) \leq C \norm{F}_{L^1(0,T;H^1(\Omega))},
$$
according to \eqref{ll2z} and \eqref{ll4c}.
By density of $W^{1,1}(0,T;H_0^1(\Omega))$ in $L^1(0,T;H^1_0(\Omega))$, it is clear that the above estimate extends to every $F \in L^1(0,T;H^1_0(\Omega))$, which proves the desired result.
\end{proof}

Armed with Lemma \ref{ll4}, we now introduce the transposition solution to \eqref{1.1}.
For $F \in L^1(0,T;H_0^1(\Omega))$, we denote by $v \in \cC^0([0,T],H_0^1(\Omega))$ the solution to \eqref{eq2t}, given by Remark \ref{rmk-eur}. Since $(t,x) \mapsto \overline{v(T-t,x)}$ is solution to \eqref{eq2} associated with the source term $(t,x) \mapsto \overline{F(T-t,x)}$, we infer from Lemma \ref{ll4}, that the mapping $F \mapsto \pd_\nu v$ is bounded from $L^1(0,T;H_0^1(\Omega))$ into $L^2(\Sigma)$. Therefore, for each $f \in L^2(\Sigma)$, the mapping
$$
\ell_f : F \mapsto \langle f , \pd_\nu v \rangle_{L^2(\Sigma)}, 
$$
is an anti-linear form on $L^1(0,T;H_0^1(\Omega))$. 
Thus, there exists a unique $u \in L^\infty(0,T;H^{-1}(\Omega))$ such that we have
\bel{tran}
\langle u , F \rangle_{L^\infty(0,T;H^{-1}(\Omega)),L^1(0,T;H^1_0(\Omega))}=\ell_f(F),\ F \in L^1(0,T;H_0^1(\Omega)),
\ee
according to Riesz's representation theorem.
The function $u$, characterized by \eqref{tran}, is named the solution in the transposition sense to \eqref{1.1}.

\subsection{Proof of Theorem \ref{Th0}} 
\label{sec-prT0}
Let $w \in L^\infty(0,T;H^{-1}(\Omega))$ be the solution in the transposition sense to the system
$$
\left\{
\begin{array}{ll}
\para{i\pd_t+\Delta_A+q} w=0,  & \mbox{in}\ Q,\\
w(0,\cdot)=0, & \mbox{in}\ \Omega,\\
w= \pd_t^2 f, & \mbox{on}\ \Sigma.
\end{array}
\right.
$$
For any $t \in (0,T)$, put $v(t,\cdot):= \int_0^t w(s,\cdot) \d s$, in such a way that $v$ is the solution in the transposition sense to the system
\bel{a6}
\left\{
\begin{array}{ll}
\para{i\pd_t+\Delta_A+q} v=0,  & \mbox{in}\ Q,\\
v(0,\cdot)=0, & \mbox{in}\ \Omega,\\
v= \pd_t f, & \mbox{on}\ \Sigma.
\end{array}
\right.
\ee
We have $v=\pd_t f \in H^{1,1/2}(\Sigma)$ by \cite[Section 4, Proposition 2.3]{[LM2]}, and since $H^{1,1/2}(\Sigma) \subset L^2(0,T; H^{1/2}(\pd \Omega))$,
and $-\Delta_A v=  iw + qv$ in $Q$, from the first line of \eqref{a6}, then $v\in L^2(0,T;H^1(\Omega)) \cap W^{1,\infty}(0,T;H^{-1}(\Omega))$. Moreover, we have the following estimate
\bel{a7}
\norm{v}_{L^2(0,T;H^1(\Omega))} \leq C \left( \norm{w}_{L^2(0,T;H^{-1}(\Omega))} + \norm{qv}_{L^2(0,T;H^{-1}(\Omega))} + \norm{\pd_t f}_{L^2(0,T; H^{1/2}(\pd \Omega))} \right),
\ee
where the constant $C>0$ depends only on $T$, $\omega$, and $M$. 

On the other hand, from the very definition of the transposition solution $w$, we obtain
\bel{a8}
\norm{w}_{L^2(0,T;H^{-1}(\Omega))}\leq T^{1/2}\norm{w}_{L^\infty(0,T;H^{-1}(\Omega))}\leq C \norm{\partial_t^2f}_{L^2(\Sigma)} \leq C\norm{f}_{H^{2,1}(\Sigma)}, 
\ee
with the aid of Lemma \ref{ll4}. As a consequence we have
\bel{a9}
\norm{q v}_{L^2(0,T;H^{-1}(\Omega))}\leq \norm{q}_{W^{1,\infty}(\Omega)} T \norm{w}_{L^2(0,T;H^{-1}(\Omega))} \leq C \norm{f}_{H^{2,1}(\Sigma)}.
\ee
Putting \eqref{a7}--\eqref{a9} together, we find that
\bel{a10}
\norm{v}_{L^2(0,T;H^1(\Omega))} \leq C\norm{f}_{H^{2,1}(\Sigma)},
\ee
for some constant $C=C(T,\omega,M)>0$. 

Finally, as $u(t)=\int_0^t v(s) ds$ is solution to \eqref{1.1} in the transposition sense, we have
$$ \norm{u}_{H^1(0,T; H^1(\Omega))} \leq (1+T)^{1 \slash 2} \norm{v}_{L^2(0,T;H^1(\Omega))}, $$
hence \eqref{t1a} follows from this and \eqref{a10}.

We turn now to proving \eqref{t1b}. To do that, we pick $f \in \cC^\infty([0,T] \times \pd \Omega) \cap H_0^{2,1}(\Sigma)$, and proceed as in the derivation of Lemma \ref{ll4}. We get that
$$
\norm{\pd_\nu u}_{L^2(\Sigma)} \leq C \left( \norm{u}_{H^1(0,T;H^1(\Omega))} + \norm{f}_{H^{2,1}(\Sigma)} \right),
$$
for some constant $C=C(T,\omega,M)>0$, so we deduce from \eqref{t1a} that
$$ 
\norm{\pd_\nu u}_{L^2(\Sigma)} \leq C\norm{f}_{H^{2,1}(\Sigma)}.
$$
The desired result follows from this by density of $\cC^\infty([0,T] \times \pd \Omega) \cap H_0^{2,1}(\Sigma)$ in $H_0^{2,1}(\Sigma)$.

\section{GO solutions}
\setcounter{equation}{0}

In this section we build GO solutions to the magnetic Schr\"odinger equation in $\Omega$. These functions are essential tools in the proof of Theorems \ref{Th1}, \ref{Th2} and \ref{Th22}. As in \cite{[Ki1]}, we take advantage of the translational invariance of $\Omega$ with respect to the longitudinal direction $x_3$, in order to adapt the method suggested by Bellassoued and Choulli in
\cite{[BC2]} for building GO solutions to the magnetic Schr\"odinger equation in a bounded domain, to the framework of the unbounded waveguide $\Omega$. Moreover, as we aim to reduce the regularity assumption imposed on the magnetic potential by the GO solutions construction method, we follow the strategy developed in \cite{[DKSU],[KLU],[KU],[Salo]} for magnetic Laplace operators, and rather build GO solutions to the Schr\"odinger equation associated with a suitable smooth approximation of the magnetic potential. 

Throughout the entire section, we consider two magnetic potentials 
$$ A_j = (A_j^{\sharp},a_{j,3}) \in W^{2,\infty}(\Omega,\R)^2 \times W^{2,\infty}(\Omega,\R),\ j=1,2, $$
and two electric potentials $q_j \in W^{1,\infty}(\Omega,\R)$, 
obeying the conditions
\bel{cond0} 
\norm{A_j}_{W^{2,\infty}(\Omega)^3} + \norm{q_j}_{W^{1,\infty}(\Omega)}\leq M,\ j=1,2,
\ee
and
\bel{cond1}
\pd_x^\alpha A_1=\pd_x^\alpha A_2\ \mbox{on}\ \pd \Omega,\ \mbox{for all}\ \alpha \in\mathbb N^3\ \mbox{such that}\ |\alpha|\leq 1. 
\ee
For $\sigma >0$, we denote by $A_{j,\sigma}^{\sharp}$ a suitable $\cC^\infty(\R^3,\R)^2 \cap W^{\infty,\infty}(\R^3,\R)^2$-approximation of $A_j^\sharp$, we shall specify in Lemma \ref{Lmol}, below.
We seek solutions $u_{j,\sigma}$ to the magnetic Schr\"odinger equation of \eqref{1.1}, where $(A_j,q_j)$ is substituted for $(A,q)$, of the form
\bel{GO} 
u_{j,\sigma}(t,x',x_3) :=\Phi_{j}(2\sigma t,x) b_{j,\sigma}(2\sigma t,x) e^{i\sigma(x'\cdot\theta-\sigma t)}
+\psi_{j,\sigma}(t,x),\ 
t \in \R,\ x=(x',x_3) \in \omega \times \R.
\ee
Here, $\theta \in \mathbb S^{1}:=\{y\in\R^2:\ |y|=1\}$ is fixed,
\bel{def-b}
b_{j,\sigma}(t,x) : =\exp \para{-i\int_0^t\theta\cdot A^\sharp_{j,\sigma}(x'-s\theta,x_3) \d s},\ 
t \in \R,\ x=(x',x_3) \in \omega \times \R, 
\ee
$\Phi_{j}$ is a solution to the following transport equation
\bel{eq-transport}
\para{\pd_t + \theta\cdot\nablas}\Phi_{j}=0\ \mbox{in}\ \R \times \Omega,
\ee
and we imposed that the remainder term $\psi_{j,\sigma} \in L^2(Q)$ scales at best like $\sigma^{-1/2}$ when $\sigma$ is large, i.e.
\bel{asy-cond}
\lim_{\sigma \to +\infty} \sigma^{1/2} \norm{\psi_{j,\sigma}}_{L^2(Q)} = 0.
\ee
Such a construction requires that $A^\sharp_{j,\sigma}$ be sufficiently close to $A_j^\sharp$, as will appear in the coming subsection.

\subsection{Magnetic potential mollification}

\label{sec-mol}

We aim to define a suitable smooth approximation 
$$A_{j,\sigma}^{\sharp}\in \cC^\infty(\R^3,\R)^2 \cap W^{\infty,\infty}(\R^3,\R)^2 $$ 
of $A_{j}^{\sharp}=(a_{1,j},a_{2,j})$, for $j=1,2$. This preliminarily requires that $A_{j}^{\sharp}$ be appropriately extended to a larger domain than $\Omega$, as follows.

\begin{lemma}
\label{ll1m}
Let $A_{j}^{\sharp}$, for $j=1,2$, be in $W^{2,\infty}(\Omega,\R)^2$ and satisfy \eqref{cond1}. Let $\tilde{\omega}$ be a smooth open bounded subset of $\R^2$, containing $\overline{\omega}$. Then, there exist two potentials $\tilde{A}_1^{\sharp}$ and $\tilde{A}_2^{\sharp}$ in $W^{2,\infty}(\R^3,\R)^2$, both of them being supported in $\tilde{\Omega}:=\tilde{\omega}\times\R$, such that we have
\bel{ll1a} 
\tilde{A}_j^\sharp=A_j^\sharp\ \mbox{in}\ \Omega,\ \mbox{for}\ j=1,2,\ \mbox{and}\ \tilde{A}_1^\sharp = \tilde{A}_2^\sharp\ \mbox{in}\ \tilde{\Omega} \setminus \Omega.
\ee
Moreover, the two estimates
\bel{ll1b}
\norm{\tilde{A}_j^\sharp}_{W^{2,\infty}(\R^3)^2} \leq C \max \left( \norm{A_1^\sharp}_{W^{2,\infty}(\Omega)^2}, \norm{A_2^\sharp}_{W^{2,\infty}(\Omega)^2} \right),\ j=1,2,
\ee
hold for some constant $C>0$, depending only on $\omega$ and $\tilde{\omega}$.
\end{lemma}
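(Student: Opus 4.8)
The plan is to construct the extensions $\tilde{A}_j^\sharp$ in two stages: first build a common extension on the ``collar'' region $\tilde\Omega\setminus\overline\Omega$ that does not depend on $j$, then patch it together with the original potentials $A_j^\sharp$ on $\Omega$ and verify that the resulting functions lie in $W^{2,\infty}(\R^3,\R)^2$. Since the hypothesis \eqref{cond1} gives $\pd_x^\alpha A_1^\sharp=\pd_x^\alpha A_2^\sharp$ on $\pd\Omega$ for $|\alpha|\leq1$, the natural object to extend is the common Cauchy data of $A_1^\sharp$ and $A_2^\sharp$ on $\pd\Omega$ (values and first-order derivatives, equivalently values and normal derivatives on $\pd\omega\times\R$). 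I would first extend, say, $A_1^\sharp$ from $\overline\Omega$ to all of $\R^3$ by a bounded linear extension operator $E\colon W^{2,\infty}(\Omega)^2\to W^{2,\infty}(\R^3)^2$ (Stein-type extension, applicable because $\pd\Omega=\pd\omega\times\R$ is a $C^2$, in fact uniformly regular, boundary, so the extension constant depends only on $\omega$), then multiply by a cutoff $\chi\in\cC_0^\infty(\tilde\omega)$ with $\chi\equiv1$ on a neighborhood of $\overline\omega$ to force support in $\tilde\Omega$; call the result $\tilde A_1^\sharp$. This already satisfies $\tilde A_1^\sharp=A_1^\sharp$ in $\Omega$ and the bound \eqref{ll1b} for $j=1$.

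Next, for $j=2$, I would define $\tilde A_2^\sharp:=\tilde A_1^\sharp$ on $\tilde\Omega\setminus\Omega$ and $\tilde A_2^\sharp:=A_2^\sharp$ on $\Omega$, extending by $0$ outside $\tilde\Omega$. The content of the lemma is then that this piecewise definition yields a genuine $W^{2,\infty}(\R^3)^2$ function. The key point is the matching of traces across $\pd\Omega$: by \eqref{cond1}, $A_2^\sharp$ and $\tilde A_1^\sharp=A_1^\sharp$ share the same values and first-order derivatives on $\pd\Omega$, so the function $\tilde A_2^\sharp$ and all of its first-order distributional derivatives are continuous across $\pd\Omega$. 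The jump in the second derivatives is an $L^\infty$ function (no singular surface measure appears precisely because the first derivatives match), hence $\tilde A_2^\sharp\in W^{2,\infty}(\R^3)^2$, with norm controlled by $\max(\|A_1^\sharp\|_{W^{2,\infty}(\Omega)},\|A_2^\sharp\|_{W^{2,\infty}(\Omega)})$ times a constant depending only on $\omega$ and $\tilde\omega$ (through the extension operator and the cutoff $\chi$). The identities in \eqref{ll1a} hold by construction, and the bound \eqref{ll1b} for $j=2$ follows from the triangle inequality applied to the two pieces.

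The main obstacle I anticipate is the rigorous verification that matching the $0$th and $1$st order traces across the interface $\pd\Omega$ is exactly enough to land in $W^{2,\infty}$ and no more — i.e.\ making precise the claim that the distributional second derivatives of the glued function have no surface-supported singular part. The clean way to do this is to note that a function which is $W^{2,\infty}$ on each side of a $C^2$ hypersurface and whose trace together with its normal derivative agree from both sides is $C^1$ globally and its gradient is Lipschitz, hence $W^{2,\infty}$ globally; one can either invoke a standard gluing lemma for Sobolev functions across a smooth interface, or argue directly by testing against $\cC_0^\infty$ functions and integrating by parts on each side, observing that the boundary terms cancel precisely because of \eqref{cond1}. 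A secondary point is bookkeeping the dependence of constants: the extension operator constant depends only on $\omega$ (uniformly in $x_3$, by translation invariance of $\Omega$ in the $x_3$ direction), and the cutoff $\chi$ depends only on the pair $(\omega,\tilde\omega)$, so the final constant $C$ in \eqref{ll1b} indeed depends only on $\omega$ and $\tilde\omega$ as claimed.
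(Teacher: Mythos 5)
Your construction is essentially identical to the paper's: extend $A_1^\sharp$ by a Stein-type operator, cut off to force support in $\tilde\Omega$, and then define $\tilde A_2^\sharp$ piecewise as $A_2^\sharp$ on $\Omega$ and $\tilde A_1^\sharp$ outside, with \eqref{cond1} guaranteeing that the glued function is $W^{2,\infty}$. The only difference is that you spell out the trace-matching/gluing argument that the paper dismisses as ``clear,'' which is a correct and welcome elaboration rather than a deviation.
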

\begin{proof} 
By \cite[Section 3, Theorem 5]{[St]} and \cite[Lemma 2.7]{[KPS2]}, there exists $\tilde{A}_1^\sharp\in W^{2,\infty}(\R^3,\R)^2$, such that
$\tilde{A}_1^\sharp=A_1^\sharp$ in $\Omega$, and \eqref{ll1b} holds true for $j=1$. Then, upon possibly substituting $\chi \tilde{A}_1^\sharp$ for $\tilde{A}_1^\sharp$, where $\chi \in C_0^\infty(\R^3,\R)$ is supported in $\tilde{\Omega}$ and verifies $\chi(x)=1$ for all $x \in \Omega$, we may assume that $\tilde{A}_1^\sharp$ is supported in $\tilde{\Omega}$ as well.

Next, we put
\bel{def-a2ts}
\tilde{A}_2^\sharp(x) :=
\left\{
\begin{array}{ll} 
A_2^\sharp(x), & \mbox{if}\ x \in \Omega, \\ \tilde{A}_1^\sharp(x), & \mbox{if}\ x \in \R^3 \setminus \Omega. 
\end{array}
\right.
\ee
Then, it is clear from \eqref{cond1} that $\tilde{A}_2^\sharp \in W^{2,\infty}(\R^3,\R)^2$ and that it satisfies \eqref{ll1b} with $j=2$.
\end{proof}

Having seen this, we define for each $\sigma>0$ the smooth approximation $a_{\sigma}\in \cC^\infty(\R^3,\R) \cap W^{\infty,\infty}(\R^3,\R)$ of a function $\tilde{a} \in W^{2,\infty}(\R^3,\R)$, supported in $\tilde{\Omega}$, by 
\bel{def-as}
a_{\sigma}(x):=\int_{\R^3} \chi_\sigma(x-y) \left( \tilde{a}(y) + (x-y) \cdot \nabla \tilde{a}(y) \right) \d y,\ x \in \R^3.
\ee
Here we have set $\chi_\sigma(x):=\sigma \chi(\sigma^{1/3} x)$ for all $x \in \R^3$, where $\chi \in \cC_0^\infty(\R^3,\R_+)$ is such that  
$$ \supp\ \chi \subset \{x \in \R^3;\ |x|\leq 1\}\ \mbox{and}\ \int_{\R^3}\chi(x) \d x=1. $$
This terminology is justified by the fact that $a_\sigma$ gets closer to $\tilde{a}$ as the parameter $\sigma$ becomes larger, as can be seen from the following result.

\begin{lemma}
\label{Lmol0}
Let $\tilde{a} \in W^{2,\infty}(\R^3,\R)$ be supported in $\tilde{\Omega}$ and satisfy $\| \tilde{a} \|_{W^{2,\infty}(\R^3)} \leq M$, for some $M>0$.
Then, there exists a constant $C>0$, depending only on $\omega$, $\tilde{\omega}$, and $M$, such that for all $\sigma >0$, we have 
\bel{a-mol0}
\norm{a_{\sigma}-\tilde{a}}_{W^{k,\infty}(\R^3)}\leq C \sigma^{(k-2)/3},\ k=0,1,
\ee
where $W^{0,\infty}(\Omega)$ stands for $L^\infty(\Omega)$, and
\bel{a-mol2}
\norm{a_{\sigma}}_{W^{k,\infty}(\R^3)} \leq C\sigma^{(k-2)/3},\ k \geq 2.
\ee
\end{lemma}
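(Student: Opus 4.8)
The idea is standard mollification analysis, but with the twist that the kernel $\chi_\sigma$ concentrates at rate $\sigma^{1/3}$ rather than $\sigma$, and that the mollifier in \eqref{def-as} includes the first-order Taylor correction term $(x-y)\cdot\nabla\tilde a(y)$. The first step is to record the scaling properties of $\chi_\sigma$: from $\chi_\sigma(x)=\sigma\chi(\sigma^{1/3}x)$ one has $\int_{\R^3}\chi_\sigma=1$, $\supp\chi_\sigma\subset\{|x|\le \sigma^{-1/3}\}$, and more generally $\int_{\R^3}|x|^m|\pd_x^\beta\chi_\sigma(x)|\,\d x\le C\sigma^{(|\beta|-m)/3}$ for every multi-index $\beta$ and every $m\ge 0$, by the change of variables $y=\sigma^{1/3}x$ (the prefactor $\sigma$ cancels the Jacobian $\sigma^{-1}$). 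These are the only facts about the kernel we shall use.

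\textbf{Estimate \eqref{a-mol0}.} Write $a_\sigma(x)-\tilde a(x)=\int_{\R^3}\chi_\sigma(x-y)\big(\tilde a(y)+(x-y)\cdot\nabla\tilde a(y)-\tilde a(x)\big)\,\d y$, using $\int\chi_\sigma=1$. For a.e.\ $x$, the quantity in parentheses is exactly the first-order Taylor remainder of $\tilde a$ at $y$ evaluated at $x$, so since $\tilde a\in W^{2,\infty}$ it is bounded by $\tfrac12\|\nabla^2\tilde a\|_{L^\infty}|x-y|^2\le CM|x-y|^2$; integrating against $\chi_\sigma$ and using $\int|x-y|^2\chi_\sigma(x-y)\,\d y\le C\sigma^{-2/3}$ gives the case $k=0$. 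For $k=1$ one differentiates under the integral sign. The clean way is to move the derivative onto $\tilde a$: after an integration by parts transferring $\pd_{x_i}$ off $\chi_\sigma$, one finds $\pd_{x_i}a_\sigma(x)=\int_{\R^3}\chi_\sigma(x-y)\big(\pd_{x_i}\tilde a(y)+(x-y)\cdot\nabla\pd_{x_i}\tilde a(y)\big)\,\d y$, i.e.\ the mollification formula \eqref{def-as} commutes with $\pd_{x_i}$ (the two terms produced by differentiating the explicit $(x-y)$ factor cancel against the boundary-free integration-by-parts term — this small computation is worth spelling out). Hence $\pd_{x_i}a_\sigma$ is the same type of object built from $\pd_{x_i}\tilde a\in W^{1,\infty}$, and the Taylor-remainder argument now only gives a first-order remainder $\le CM|x-y|$, so $\|\pd_{x_i}a_\sigma-\pd_{x_i}\tilde a\|_{L^\infty}\le C\sigma^{-1/3}$, which is \eqref{a-mol0} for $k=1$. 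Throughout, the support condition on $\tilde a$ guarantees all integrals are over a fixed bounded set in the transverse variables, so the constants depend only on $\omega$, $\tilde\omega$, $M$.

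\textbf{Estimate \eqref{a-mol2}.} For $k\ge 2$ we can no longer use $W^{2,\infty}$-regularity of $\tilde a$ to absorb derivatives, so instead we keep $k-2$ derivatives on the kernel. Writing a general derivative $\pd_x^\alpha$ with $|\alpha|=k$ as $\pd_x^\beta\pd_x^\gamma$ with $|\gamma|\le 2$ and $|\beta|=k-2$, apply the commutation identity above $|\gamma|$ times to land $\pd_x^\gamma$ on $\tilde a$ (legitimate since $\tilde a\in W^{2,\infty}$), then differentiate the remaining $\pd_x^\beta$ by transferring it onto the kernel: $\pd_x^\beta a_\sigma^{(\gamma)}(x)=\int_{\R^3}(\pd^\beta\chi_\sigma)(x-y)\big(\pd^\gamma\tilde a(y)+(x-y)\cdot\nabla\pd^\gamma\tilde a(y)\big)\,\d y$ up to lower-order terms with fewer derivatives on $\chi_\sigma$, each handled identically. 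Bounding $\pd^\gamma\tilde a$ and $\nabla\pd^\gamma\tilde a$ in $L^\infty$ by $M$ (for $|\gamma|\le 1$) and using $\|\nabla^2\tilde a\|_{L^\infty}\le M$ together with $\int|x-y|^m|\pd^\beta\chi_\sigma(x-y)|\,\d y\le C\sigma^{(|\beta|-m)/3}=C\sigma^{(k-2-m)/3}$ for $m=0,1$ yields $\|\pd_x^\alpha a_\sigma\|_{L^\infty}\le C\sigma^{(k-2)/3}$, which is \eqref{a-mol2}.

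\textbf{Main obstacle.} The only genuinely delicate point is the bookkeeping in the case $|\gamma|=2$ in \eqref{a-mol2}: one cannot put a second derivative on $\tilde a$ inside the Taylor-correction term $(x-y)\cdot\nabla\tilde a(y)$ without creating $\nabla^2\tilde a$ paired with the $(x-y)$ weight, but this is precisely the combination $\int|x-y|\,|\pd^\beta\chi_\sigma|\le C\sigma^{(k-3)/3}\le C\sigma^{(k-2)/3}$, so it is in fact harmless; the care needed is simply to track which terms carry the $(x-y)$ weight (gaining a factor $\sigma^{-1/3}$) and which do not, and to check that the worst term saturates the claimed exponent. No new idea beyond the scaling bounds on $\chi_\sigma$ and the commutation identity is required.
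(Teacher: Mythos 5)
Your proof is correct and follows essentially the same route as the paper: both arguments rest on the normalization $\int_{\R^3}\chi_\sigma=1$, the moment bounds $\int_{\R^3}|z|^m|\pd^\beta\chi_\sigma(z)|\,\d z\leq C\sigma^{(|\beta|-m)/3}$, and the $\cC^{1,1}$ regularity of $\tilde{a}$ inherited from $W^{2,\infty}$; the paper organizes the cases $k=0,1$ through the change of variables $\eta=\sigma^{1/3}(x-y)$ and an exact integral formula for $a_\sigma-\tilde{a}$, whereas you use the pointwise Taylor remainder together with the commutation identity $\pd_{x_i}a_\sigma=(\pd_{x_i}\tilde{a})_\sigma$ — an equivalent bookkeeping. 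One small slip in your treatment of \eqref{a-mol2}: the saturating contribution from the correction term carries $k-1$ (not $k-2$) derivatives on the kernel together with the weight $|x-y|$ and a factor $\nabla^2\tilde{a}$, so the correct bound for it is $C\sigma^{(k-2)/3}$ rather than $C\sigma^{(k-3)/3}$; this is exactly the claimed rate, so the conclusion is unaffected (and since the paper omits the proof of \eqref{a-mol2} altogether, your sketch of that case is a useful supplement).
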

\begin{proof}
We only establish \eqref{a-mol0}, the estimate \eqref{a-mol2} being obtained with similar arguments.
For $x \in \R^3$ fixed, we make the change of variable $\eta=\sigma^{1/3}(x-y)$ in \eqref{def-as}. We get
\bel{a11}
a_{\sigma}(x)=\int_{\R^3} \chi(\eta) \tilde{a}(x-\sigma^{-1/3}\eta) \d \eta + \sigma^{-1/3} \int_{\R^3} \chi(\eta) \left( \eta \cdot \nabla \tilde{a}(x-\sigma^{-1/3}\eta) \right) 
\d \eta.
\ee
On the other hand, we have
\beas
\int_{\R^3} \chi(\eta) \tilde{a}(x-\sigma^{-1/3}\eta) \d \eta - \tilde{a}(x) & = & \int_{\R^3} \chi(\eta) \left( \tilde{a}(x-\sigma^{-1/3}\eta)-\tilde{a}(x) \right) \d \eta \\
& = & -\sigma^{-1/3} \int_{\R^3} \chi(\eta) \left( \int_0^1 \eta \cdot \nabla \tilde{a}(x-s \sigma^{-1/3} \eta) \d s \right) \d \eta,
\eeas
so we infer from \eqref{a11} that
\bel{lmol1}
a_{\sigma}(x) - \tilde{a}(x) =
\sigma^{-1/3}\int_{\R^3}\chi(\eta)\left( \int_0^1 \eta \cdot \left( \nabla \tilde{a}(x-\sigma^{-1/3}\eta) - \nabla \tilde{a}(x- s \sigma^{-1/3}\eta) \right) \d s \right) \d \eta.
\ee
By the Sobolev embedding theorem (see e.g.  \cite[Theorem 1.4.4.1]{[Gr]} and \cite[Lemma 3.13]{[Ta]}), we know that
$\tilde{a} \in \cC^{1,1}(\R^3)$ satisfies the estimate $\norm{\tilde{a}}_{\cC^{1,1}(\R^3)} \leq C \norm{\tilde{a}}_{W^{2,\infty}(\R^3)}$, where $C>0$ is independent of $\tilde{a}$.
From this and \eqref{lmol1}, it then follows that
$$
| a_{\sigma}(x)-\tilde{a}(x) |\leq C \norm{\tilde{a}}_{W^{2,\infty}(\R^3)} \left( \int_{\R^3} \chi(\eta) |\eta|^2 \d \eta \right) \sigma^{-2/3},
$$
which, together with the estimate $\| \tilde{a} \|_{W^{2,\infty}(\R^3)} \leq M$, yields \eqref{a-mol0} with $k=0$. Further, upon differentiating \eqref{lmol1} with respect to $x_i$, for $i=1,2,3$, and upper bounding the integrand
function $(\eta, s) \mapsto \nabla \pd_i \tilde{a}(x-\sigma^{-1/3}\eta) - \nabla \pd_i \tilde{a}(x - s \sigma^{-1/3}\eta)$ by $2 \| \tilde{a} \|_{W^{2,\infty}(\R^3)}$, uniformly over $\R^3 \times (0,1)$, we obtain \eqref{a-mol0} for $k=1$. 
\end{proof}

We notice for further use from \eqref{def-as} and the expression of $\chi_\sigma$, that
\beas
a_\sigma(x) & = & \int_{\R^3} \left( \chi_{\sigma}(x-y) - \nabla \cdot \left( (x-y) \chi_\sigma(x-y) \right)  \right) \tilde{a}(y) \d y \\
& = & \int_{\R^3} \left( 4 \sigma \chi(\sigma^{1 \slash 3}(x-y))  + \sigma^{4 \slash 3} (x-y) \cdot \nabla \chi(\sigma^{1 \slash 3}(x-y)) \right) \tilde{a}(y)\d y,\ x \in \R^3.
\eeas
Making the change of variable $z=\sigma^{1 \slash 3}(x-y)$ in the above integral, we find that
$$ a_\sigma(x) = \int_{\R^3} \left( 4 \chi(z) + z \cdot \nabla \chi(z) \right) \tilde{a}(\sigma^{-1 \slash 3} z - x) \d z,\ x \in \R^3. $$
Since $\chi$ is compactly supported in $\R^3$, this entails that
\bel{a40}
\norm{a_{\sigma}}_{L^\infty(\R^3)} \leq C \norm{\tilde{a}}_{L^\infty(\R^3)},\ \sigma >0, 
\ee
where the constant $C>0$ depends only on $\chi$.

Let $\tilde{A}_j^\sharp=(\tilde{a}_{1,j},\tilde{a}_{2,j})$, $j=1,2$, be given by Lemma \ref{ll1m}. With reference to \eqref{def-as}, we define the smooth magnetic potentials
$$ A_{j,\sigma}^{\sharp}=(a_{1,j,\sigma},a_{2,j,\sigma}) \in \cC^\infty(\R^3,\R)^2 \cap W^{\infty,\infty}(\R^3,\R)^2, $$
by setting
\bel{def-aijsigma}
a_{i,j,\sigma}(x):=\int_{\R^3} \chi_\sigma(x-y) \left( \tilde{a}_{i,j}(y) + (x-y) \cdot \nabla \tilde{a}_{i,j}(y) \right) \d y,\ x \in \R^3,\ i,j=1,2.
\ee
Thus, applying Lemma \ref{Lmol0} with $\tilde{a}=\tilde{a}_{i,j}$ for $i,j=1,2$, we obtain the following result.

\begin{lemma}
\label{Lmol}
For $j=1,2$, let $A_{j}^{\sharp}$ be the same as in Lemma \ref{ll1m}, and satisfy \eqref{cond0}.
Then, there exists a constant $C>0$, depending only on $\omega$ and $M$, such that for each $j=1,2$, and all $\sigma >0$, we have
\bel{mol1}
\norm{A_{j,\sigma}^{\sharp}-A_{j}^{\sharp}}_{W^{k,\infty}(\Omega)^2} \leq \norm{A_{j,\sigma}^{\sharp}-\tilde{A}_{j}^{\sharp}}_{W^{k,\infty}(\R^3)^2} \leq C \sigma^{(k-2)/3},\ k=0,1,
\ee
where $\tilde{A}_{j}^{\sharp}$ is given by Lemma \ref{ll1m}, and
\bel{mol2}
\norm{A_{j,\sigma}^{\sharp}}_{W^{k,\infty}(\R^3)^2} \leq C \sigma^{(k-2)/3} \norm{\tilde{A}_{j}^{\sharp}}_{W^{2,\infty}(\R^3)} \leq C\sigma^{(k-2)/3},\ k \geq 2.
\ee
\end{lemma}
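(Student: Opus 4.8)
The proof will be an immediate consequence of Lemma \ref{Lmol0}, applied to each scalar component of the extended potentials furnished by Lemma \ref{ll1m}. First I would invoke Lemma \ref{ll1m} to produce $\tilde{A}_j^\sharp=(\tilde{a}_{1,j},\tilde{a}_{2,j})\in W^{2,\infty}(\R^3,\R)^2$, supported in $\tilde\Omega$, coinciding with $A_j^\sharp$ in $\Omega$, and satisfying, by \eqref{ll1b} and \eqref{cond0}, the bound $\norm{\tilde{a}_{i,j}}_{W^{2,\infty}(\R^3)}\leq\norm{\tilde{A}_j^\sharp}_{W^{2,\infty}(\R^3)^2}\leq CM$ for $i,j=1,2$, the constant $C$ depending only on $\omega$ and $\tilde\omega$ (which we may fix once and for all in terms of $\omega$). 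Since, by \eqref{def-aijsigma}, each $a_{i,j,\sigma}$ is precisely the smooth approximation \eqref{def-as} of $\tilde{a}_{i,j}$, I would then apply Lemma \ref{Lmol0} with $\tilde{a}=\tilde{a}_{i,j}$ (and $M$ replaced by $CM$), obtaining for all $\sigma>0$ and $i,j=1,2$ the estimate $\norm{a_{i,j,\sigma}-\tilde{a}_{i,j}}_{W^{k,\infty}(\R^3)}\leq C\sigma^{(k-2)/3}$ for $k=0,1$, and, keeping track in the proof of \eqref{a-mol2} of the linear dependence of the bound on $\norm{\tilde{a}_{i,j}}_{W^{2,\infty}(\R^3)}$, the estimate $\norm{a_{i,j,\sigma}}_{W^{k,\infty}(\R^3)}\leq C\sigma^{(k-2)/3}\norm{\tilde{a}_{i,j}}_{W^{2,\infty}(\R^3)}$ for $k\geq2$, with $C$ depending only on $\omega$ (through $\chi$ and $\tilde\omega$) and $M$.

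Summing these componentwise estimates over $i=1,2$ yields the second inequality in \eqref{mol1} and the first inequality in \eqref{mol2}. To finish, for the leftmost inequality in \eqref{mol1} I would use the fact, from \eqref{ll1a}, that $\tilde{A}_j^\sharp=A_j^\sharp$ in $\Omega$, so that $A_{j,\sigma}^\sharp-A_j^\sharp$ and $A_{j,\sigma}^\sharp-\tilde{A}_j^\sharp$ agree on $\Omega$; hence $\norm{A_{j,\sigma}^\sharp-A_j^\sharp}_{W^{k,\infty}(\Omega)^2}=\norm{A_{j,\sigma}^\sharp-\tilde{A}_j^\sharp}_{W^{k,\infty}(\Omega)^2}\leq\norm{A_{j,\sigma}^\sharp-\tilde{A}_j^\sharp}_{W^{k,\infty}(\R^3)^2}$. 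For the rightmost inequality in \eqref{mol2}, I would simply bound $\norm{\tilde{A}_j^\sharp}_{W^{2,\infty}(\R^3)}\leq CM$ using \eqref{ll1b} and \eqref{cond0}, and absorb the resulting constant.

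There is essentially no serious obstacle here: the entire analytic content is already packaged in Lemma \ref{Lmol0}, and the only points demanding a little care are, first, checking that the constant from Lemma \ref{Lmol0} — which a priori depends on the $W^{2,\infty}$-bound of the function being mollified — can be re-expressed in terms of $M$ alone once \eqref{ll1b} is invoked; and second, extracting from the (merely sketched) proof of \eqref{a-mol2} the linear dependence of the $k\geq2$ bound on $\norm{\tilde a}_{W^{2,\infty}(\R^3)}$, which is immediate since the map $\tilde a\mapsto a_\sigma$ defined by \eqref{def-as} is linear.
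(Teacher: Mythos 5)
Your proposal is correct and is essentially identical to the paper's argument, which likewise derives the lemma by applying Lemma \ref{Lmol0} componentwise to the extensions $\tilde{a}_{i,j}$ furnished by Lemma \ref{ll1m} and using \eqref{ll1b} to re-express the constant in terms of $M$. The only difference is that you spell out the routine bookkeeping (the restriction from $\R^3$ to $\Omega$ and the linear dependence of the $k\geq 2$ bound on $\norm{\tilde a}_{W^{2,\infty}(\R^3)}$) that the paper leaves implicit.
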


For further use, we notice from \eqref{def-b} and from \eqref{mol2} with $k=2$, that the following estimate
\bel{a1b}
\| b_{j,\sigma} \|_{W^{2,\infty}(\R \times \Omega)} + \| \pd_t b_{j,\sigma} \|_{W^{2,\infty}(\R \times \Omega)} \leq C,\ j=1,2,
\ee
holds uniformly in $\sigma >0$, for some constant $C>0$ which is independent of $\sigma$. Moreover, it can be checked through direct calculation from \eqref{def-b}, that
\beas
\theta \cdot \nablas b_{j,\sigma}(t,x) 
&  = & -i \left( \sum_{m=1}^2 \theta_m \int_0^t \sum_{k=1}^2 \theta_k \pd_{x_k} a_{j,m,\sigma}(x'-s\theta,x_3) \d s \right) b_j(t,x) \\
& = & i \left( \sum_{k=1}^2 \theta_k \int_0^t \frac{\d}{\d s} a_{j,k,\sigma}(x'-s\theta,x_3) \d s \right) b_j(t,x) \\
& = & i \left( \theta \cdot A^\sharp_{j,\sigma} (x'-t\theta,x_3 ) -\theta \cdot A^\sharp_{j,\sigma}(x',x_3) \right) b_j(t,x),  
\eeas
for all $(t,x) \in Q$, from where we see that $b_{j,\sigma}$ is solution to the transport equation
\bel{eq-trb}
( \pd_t + \theta \cdot \nablas +i \theta\cdot A_{j,\sigma}^\sharp ) b_{j,\sigma}=0,\ \mbox{in}\ Q,\ \sigma \in \R_+^*,\ j=1,2.
\ee

We turn now to building suitable GO solutions to the magnetic Schr\"odinger equation of \eqref{1.1}.

\subsection{Building GO solutions to magnetic Schr\"odinger equations}
For $j=1,2$, we seek GO solutions to the magnetic Schr\"odinger equation of \eqref{1.1} with $(A,q)$ replaced by $(A_j,q_j)$, obeying \eqref{GO}--\eqref{asy-cond}, where the function $A_{j,\sigma}^{\sharp}$, appearing in \eqref{def-b}, is the smooth magnetic potential described by Lemma \ref{Lmol}. This requires that the functions $\Phi_j$, appearing in \ref{GO}, be preliminarily defined more explicitly.
To do that, we set $B(0,r) := \{ x' \in \R^2;\ |x'|< r\}$ for all $r>0$, and take $R>1$ so large that $\overline{\tilde{\omega}} \subset B(0,R-1)$,
where $\tilde{\omega}$ is the same as in Lemma \ref{ll1m}. 
Next, we pick $\phi_j \in \cC_0^\infty(\R^3)$, such that
\bel{def-phi}
\supp\ \phi_j(\cdot,x_3) \subset \cD_R:= B(0,R+1) \backslash \overline{B(0,R)},\ x_3 \in \R,
\ee
and put
\bel{def-Phi}
\Phi_j(t,x):=\phi_j(x'-t \theta ,x_3),\ (t,x) \in \R \times \R^3. 
\ee
It is apparent from \eqref{def-phi} and the embedding $\omega \subset B(0,R-1)$, that 
\bel{supp-phi1}
\supp\ \phi_j(\cdot,x_3) \cap \omega = \emptyset,\ x_3 \in \R,
\ee
and from \eqref{def-Phi}, that $\Phi$ is solution to the transport equation \eqref{eq-transport}.

In the sequel, we choose $\sigma > \sigma_* :=(R+1) \slash T$, in such a way that
\bel{supp-phi2}
\supp\ \Phi_j(\pm 2 \sigma t,\cdot,x_3) \cap \omega = \supp\ \phi_j(\cdot \mp 2 \sigma t \theta,x_3)  \cap \omega = \emptyset,\ (t,x_3) \in [T,+\infty) \times \R.
\ee
Notice that upon possibly enlarging $R$, we may assume that $\sigma_* \geq 1$, which will always be the case in the remaining part of this text. 

Next, for $k \in \mathbb{N}$, we introduce the following subspace of $H^k(\R^3)$,
$$ \cH_\theta^k := \{ \phi \in H^k(\R^3);\ \theta \cdot \nabla_{x'} \cdot \phi \in  H^k(\R^3)\ \mbox{and}\ \supp\ \phi(\cdot,x_3) \subset \cD_R\ \mbox{for a.e.}\ x_3 \in \R \}, $$
endowed with the norm
\bel{def-nrm}
N_{k,\theta}(\phi) := \norm{\phi}_{H^k(\R^3)}+ \norm{\theta \cdot \nabla_{x'} \phi}_{H^k(\R^3)},\ \phi \in \cH_\theta^2.
\ee
For notational simplicity, we put
\bel{def-snrm}
\cN_{\theta,\sigma}(\phi):= N_{2,\theta}(\phi)+ \sigma^{1 \slash 3} N_{0,\theta}(\phi).
\ee
The coming statement claims existence of GO solutions $u_{j,\sigma}$, expressed by \eqref{GO}, with 
$L^2(0,T;H^k(\Omega))$-norm of correction term $\psi_{j,\sigma}$ bounded by $\cN_{\theta,\sigma}(\phi_j) \slash \sigma^{1-k}$ for $k=0,1$.

\begin{Prop}
\label{pr-4.1}
Let $M>0$, and let $A_{j} \in W^{2,\infty}(\Omega,\R^3)$ and $q_j \in W^{1,\infty}(\Omega,\R)$, $j=1,2$, satisfy \eqref{cond0}-\eqref{cond1}. Then, for all $\sigma > \sigma_*$, there exists 
$u_{j,\sigma} \in \cC^1([0,T],L^2(\Omega)) \cap \cC([0,T],H^2(\Omega))$ obeying \eqref{GO}-\eqref{asy-cond}, where $\Phi_j$ is defined by \eqref{def-phi}-\eqref{def-Phi}, such that
we have
$$
\para{i \pd_t +\Delta_{A_j} + q_j} u_{j,\sigma}=0\ \mbox{in}\ Q,
$$
and the correction term satisfies $\psi_{j,\sigma}=0$ on $\Sigma$, for $j=1,2$, and $\psi_{1,\sigma}(T,\cdot)=\psi_{2,\sigma}(0,\cdot)=0$ in $\Omega$.\\
Moreover, the following estimate
\bel{4.6}
\sigma \norm{\psi_{j,\sigma}}_{L^2(Q)}+\norm{\nabla \psi_{j,\sigma}}_{L^2(Q)^3}
\leq  C \cN_{\theta,\sigma}(\phi_j),\ j=1,2,
\ee
holds for some constant $C>0$ depending only on $T$, $\omega$, and $M$, where the function $\phi_j \in \cC_0^\infty(\R^3)$ fulfills \eqref{def-phi}. 
\end{Prop}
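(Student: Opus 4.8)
The plan is to substitute the ansatz \eqref{GO} into the magnetic Schrödinger equation and derive the equation that the correction term $\psi_{j,\sigma}$ must satisfy, then solve that equation using the well-posedness theory of Section 2 (Lemma \ref{ll2} and Remark \ref{rmk-eur}), keeping careful track of the dependence on $\sigma$. First I would write $u_{j,\sigma} = v_{j,\sigma} + \psi_{j,\sigma}$ with $v_{j,\sigma}(t,x) := \Phi_j(2\sigma t,x) b_{j,\sigma}(2\sigma t,x) e^{i\sigma(x'\cdot\theta - \sigma t)}$, and compute $(i\pd_t + \Delta_{A_j} + q_j) v_{j,\sigma}$. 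The point of the construction is that the two leading powers of $\sigma$ cancel: the $\sigma^2$ term vanishes because $|\theta|=1$, and the $\sigma^1$ term vanishes because $\Phi_j$ solves the transport equation \eqref{eq-transport} while $b_{j,\sigma}$ solves \eqref{eq-trb} (this is precisely why we introduced the smooth approximation $A_{j,\sigma}^\sharp$ and the phase $b_{j,\sigma}$). What remains is a source term $F_{j,\sigma} := -(i\pd_t + \Delta_{A_j} + q_j)v_{j,\sigma}$ which, after collecting terms, is $O(\sigma)$ in $L^2$ but only through the discrepancy $A_j^\sharp - A_{j,\sigma}^\sharp$, controlled by Lemma \ref{Lmol}; more precisely one expects $\|F_{j,\sigma}\|_{L^1(0,T;L^2(\Omega))} \leq C \cN_{\theta,\sigma}(\phi_j)$, where the factor $\sigma^{1/3} N_{0,\theta}(\phi_j)$ absorbs the $\sigma \cdot \|A_j^\sharp - A_{j,\sigma}^\sharp\|_{L^\infty} = O(\sigma^{2/3})$ contribution and the $N_{2,\theta}(\phi_j)$ factor absorbs the $\sigma^0$ contributions coming from two derivatives falling on $\Phi_j$ and $b_{j,\sigma}$.

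Next I would define $\psi_{j,\sigma}$ as the solution of the inhomogeneous magnetic Schrödinger equation $(i\pd_t + \Delta_{A_j} + q_j)\psi_{j,\sigma} = F_{j,\sigma}$ with homogeneous Dirichlet boundary condition on $\Sigma$ — legitimate because $v_{j,\sigma}$ vanishes on $\Sigma$ by the support condition \eqref{supp-phi1} — and with the initial condition $\psi_{2,\sigma}(0,\cdot)=0$ (for $j=2$) or the final condition $\psi_{1,\sigma}(T,\cdot)=0$ (for $j=1$); here \eqref{supp-phi2} guarantees $v_{j,\sigma}$ itself vanishes at the relevant time endpoint, so $u_{j,\sigma}$ satisfies the claimed endpoint conditions. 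Lemma \ref{ll2}(i), applied after first regularizing $F_{j,\sigma}$ and then passing to the limit (or applied in the $H^{-1}$ setting), yields $\psi_{j,\sigma} \in \cC([0,T],H_0^1(\Omega))$ with $\|\psi_{j,\sigma}\|_{\cC([0,T],H^1(\Omega))} \leq C\|F_{j,\sigma}\|_{L^1(0,T;H^1(\Omega))}$; to get the sharper $\sigma^{-1}$ decay in the $L^2(Q)$-norm of $\psi_{j,\sigma}$ claimed in \eqref{4.6}, I would instead estimate $\psi_{j,\sigma}$ in $H^{-1}(\Omega)$ via the skew-adjointness of $i\sB_{A_j}$ (Lemma \ref{ll1}) and a Duhamel/energy argument, interpolate with the $H^1$ bound, and exploit that $F_{j,\sigma}$ is a sum of terms each carrying an explicit power of $\sigma$. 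The regularity $u_{j,\sigma} \in \cC^1([0,T],L^2(\Omega)) \cap \cC([0,T],H^2(\Omega))$ follows from Lemma \ref{ll2}(ii) since $v_{j,\sigma}$ is smooth in $x$ and $F_{j,\sigma}$ is in $W^{1,1}(0,T;L^2(\Omega))$ (the coefficients $A_{j,\sigma}^\sharp$ and $b_{j,\sigma}$ being smooth, with time-derivative bounds from \eqref{a1b}).

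The main obstacle I anticipate is the bookkeeping of $\sigma$-powers in $F_{j,\sigma}$ and the extraction of the $\sigma^{-1}$ gain in \eqref{4.6}. One must carefully isolate which terms in $(i\pd_t + \Delta_{A_j})v_{j,\sigma}$ are genuinely $O(1)$ (second derivatives of $\Phi_j b_{j,\sigma}$, cross terms $\theta \cdot \nabla$ of the amplitude, the potential terms $q_j v_{j,\sigma}$ and $|A_j|^2 v_{j,\sigma}$, and the term $2i A_j^\sharp \cdot \sigma\theta v_{j,\sigma}$ minus its mollified counterpart which is where the $\sigma^{2/3}$ appears) versus the ones that were arranged to cancel, and then verify that solving $i\pd_t\psi + \Delta_{A_j}\psi = F$ with data vanishing at one endpoint produces an extra factor $\sigma^{-1}$ relative to the $L^1_t L^2_x$ size of $F$ — this is the standard "one power of $\sigma$ gained per solve" heuristic but needs the $H^{-1}$-based estimate to be made rigorous, since a naive $L^2$ energy estimate for the Schrödinger equation does not by itself gain derivatives or powers of $\sigma$. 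A secondary technical point is justifying the endpoint time conditions and the homogeneous boundary condition for $\psi_{j,\sigma}$ simultaneously, which is exactly what the support conditions \eqref{supp-phi1}–\eqref{supp-phi2} on $\phi_j$ and the choice $\sigma > \sigma_* = (R+1)/T$ are designed to handle, so I would invoke them explicitly at that step.
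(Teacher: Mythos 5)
Your overall architecture coincides with the paper's: same ansatz and decomposition $u_{j,\sigma}=v_{j,\sigma}+\psi_{j,\sigma}$, same cancellation of the $\sigma^2$ and $\sigma^1$ terms via \eqref{eq-transport} and \eqref{eq-trb}, the same splitting of the residual source into an $O(1)$ part (two derivatives on the amplitude, potential terms) and the mollification discrepancy $2\sigma\theta\cdot(A_{j,\sigma}^\sharp-A_j^\sharp)\vartheta_\sigma$ of size $O(\sigma^{1\slash 3})$, and the same appeal to Lemma \ref{ll2}(ii) for existence and regularity of $\psi_{j,\sigma}$. However, there is a genuine gap in how you propose to obtain the factor $\sigma^{-1}$ in the $L^2(Q)$-bound of \eqref{4.6}. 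You assert $\norm{F_{j,\sigma}}_{L^1(0,T;L^2(\Omega))}\leq C\,\cN_{\theta,\sigma}(\phi_j)$ and then plan to recover the missing power of $\sigma$ from the PDE solve via an $H^{-1}$ energy estimate (skew-adjointness of $i\sB_{A_j}$) and interpolation with an $H^1$ bound, on the grounds that the naive $L^2$ estimate cannot gain a power of $\sigma$. This mislocates the mechanism. The gain does \emph{not} come from the solve: the paper uses precisely the naive estimate $\norm{\psi_{j,\sigma}(t,\cdot)}_{L^2(\Omega)}\leq\norm{g_\sigma}_{L^1(0,T;L^2(\Omega))}$ from the unitary group $e^{-it\sH_{A_j,q_j}}$. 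The factor $\sigma^{-1}$ is already present in the source: since the amplitude is evaluated at the rescaled time $2\sigma t$ and $s\mapsto g_{m,\sigma}(s,\cdot)$ belongs to $L^1(\R,L^2(\Omega))$ (because the support of $\phi_j(\cdot-s\theta,x_3)$ leaves $\omega$ after $s$ of order $R$), the substitution $s=2\sigma t$ yields $\int_0^T\norm{g_{m,\sigma}(2\sigma t,\cdot)}_{L^2(\Omega)}\d t\leq\sigma^{-1}\norm{g_{m,\sigma}}_{L^1(\R,L^2(\Omega))}$, i.e. the sharp bound is $C\sigma^{-1}\cN_{\theta,\sigma}(\phi_j)$, not $C\cN_{\theta,\sigma}(\phi_j)$. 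Your alternative $H^{-1}$-plus-interpolation route is not carried out and would be delicate: to recover $\norm{\psi}_{L^2}\lesssim\sigma^{-1}$ by interpolation you would need $\norm{\psi}_{H^{-1}}\lesssim\sigma^{-2}$, which requires gaining two powers of $\sigma$ and is not obviously available.

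A related omission concerns the gradient bound $\norm{\nabla\psi_{j,\sigma}}_{L^2(Q)^3}\leq C\cN_{\theta,\sigma}(\phi_j)$. A direct $H^1$ energy estimate would require $\norm{g_\sigma}_{L^1(0,T;H^1(\Omega))}$, hence three derivatives of $\phi_j$, which is not controlled by $\cN_{\theta,\sigma}(\phi_j)$ (the latter only controls $\phi_j$ in $H^2$ plus its derivative in the direction $\theta$ in $H^2$). The paper circumvents this with \cite[Lemma 3.2]{[BD]} (applicable since $g_\sigma(0,\cdot)=0$, with $\varepsilon=\sigma^{-1}$), which bounds $\norm{\nabla\psi}$ by $\sigma\norm{g_\sigma}_{L^1L^2}+\sigma^{-1}\norm{\pd_tg_\sigma}_{L^1L^2}$; the time derivative of the amplitude is a \emph{transport} derivative $\theta\cdot\nabla_{x'}$, which is exactly what $N_{2,\theta}$ controls. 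Your proposal does not identify this step. Finally, a minor factual slip: $v_{j,\sigma}$ does \emph{not} vanish on $\Sigma$ (its trace is the nontrivial Dirichlet input $f_\sigma$ of \eqref{def-fsigma}, since $\phi_j(\cdot-2\sigma t\theta,x_3)$ meets $\pd\omega$ at intermediate times); imposing the homogeneous boundary condition on $\psi_{j,\sigma}$ needs no such justification, as it is simply part of the definition of the correction term.
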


\begin{proof}
We prove the result for $j=2$, the case $j=1$ being obtained in the same way.

In light of \eqref{GO}--\eqref{eq-transport} and the identity $(i \pd_t + \Delta_{A_2} +q_2) u_{2,\sigma}=0$ imposed on $ u_{2,\sigma}$ in $Q$, we seek
a solution $\psi_{2,\sigma}$ to the following IBVP
\bel{4.7}
\left\{
\begin{array}{ll}
\para{i \pd_t + \Delta_{A_2}+q_2} \psi_{2,\sigma}=g_\sigma, & \mbox{in}\ Q, \\
\psi_2(0,\cdot)=0, & \mbox{in}\ \Omega, \\
\psi_2=0, & \mbox{on}\ \Sigma,
\end{array}
\right.
\ee
where
$$
g_\sigma :=-\para{i\partial_t+\Delta_{A_2}+q_2}\para{w_\sigma \varphi_\sigma},
$$
with
\bel{a12}
w_\sigma(t,x') :=e^{i\sigma (x'\cdot \theta -\sigma t)}\ \mbox{and}\ \varphi_\sigma(t,x):=\vartheta_\sigma(2 \sigma t , x),\ \mbox{where}\ 
\vartheta_\sigma:= \Phi_2 b_{2,\sigma}.
\ee

Next, taking into account that $(i \pd_t +\Delta_{A_2}+q_2) w_\sigma = (i \nabla \cdot A_2 -|A_2|^2  - 2 \sigma \theta \cdot A_2^\sharp + q_2) w_\sigma$, and recalling from
\eqref{eq-transport} and \eqref{eq-trb} that
$i(\pd_t +2 \sigma \theta \cdot \nabla_{x'})\varphi_\sigma = 2\sigma \theta \cdot A^\sharp_{2,\sigma} \varphi_\sigma$, we get by straightforward computations that
\bel{a13}
g_\sigma(t,x) = -w_\sigma(t,x) \sum_{m=0,1} g_{m,\sigma}(2\sigma t,x),\ \mbox{with}\
g_{0,\sigma} := (\Delta _{A_2}+q_2) \vartheta_\sigma,\ 
g_{1,\sigma}:= 2 \sigma \theta \cdot (A_{2,\sigma}^\sharp-A_2^\sharp) \vartheta_\sigma.
\ee

As $g_{\sigma} \in W^{1,1}(0,T; L^2(\Omega ))$, by \eqref{def-phi}-\eqref{def-Phi}, we know from Lemma \ref{ll2} that \eqref{4.7} admits a unique solution 
$\psi_{2,\sigma}\in \cC^1([0,T],L^2(\Omega))\cap \cC([0,T],H^1_0(\Omega)\cap H^2(\Omega))$.
Moreover, since
$$ \psi_{2,\sigma}(t,x) =-i \int_0^t e^{-i(t-s) \sH_{A_2,q_2}} g_{\sigma}(s,x) \d s,\ (t,x) \in Q, $$
where $\sH_{A_2,q_2}$ is the self-adjoint operator acting in $L^2(\Omega)$, which is defined in Subsection \ref{sec-elliptic}, we have
$$
\| \psi_{2,\sigma}(t,\cdot) \|_{L^2(\Omega)}  \leq \int_0^t \| e^{-i(t-s) \sH_{A_2,q_2}} g_{\sigma}(s,\cdot) \|_{L^2(\Omega)} \d s \leq \| g_{\sigma} \|_{L^1(0,T;L^2(\Omega))}, 
$$
uniformly in $t \in (0,T)$. This entails $\norm{\psi_{2,\sigma}}_{L^2(Q)} \leq T^{1/2} \norm{g_{\sigma}}_{L^1(0,T;L^2(\Omega))}$, which together with 
\eqref{a13}, yields
\bel{l4.1a}
\norm{\psi_{2,\sigma}}_{L^2(Q)} \leq T^{1/2} \sum_{m=0,1} \int_0^T \norm{g_{m,\sigma}(2\sigma t,\cdot)}_{L^2(\Omega)} \d t \leq \sigma^{-1} T^{1/2} \sum_{m=0,1} \norm{g_{m,\sigma}}_{L^1(\R,L^2(\Omega))}.
\ee
We are left with the task of bounding each term $\norm{g_{m,\sigma}}_{L^1(\R,L^2(\Omega))}$, for $m=0,1$, separately. We start with $m=0$, and obtain
\bea
\norm{g_{0,\sigma}}_{L^1(\R,L^2(\Omega))} & = & \int_\R \norm{(\Delta_{A_2}+q_2)(\Phi_2 b_{2,\sigma})(s,\cdot)}_{L^2(\Omega)} \d s \nonumber \\
& \leq & C \norm{b_{2,\sigma}}_{W^{2,\infty}(\R \times \Omega)} \norm{\phi_2}_{H^2(\R^3)} \leq C \norm{\phi_2}_{H^2(\R^3)}, \label{a14}
\eea
by combining estimate \eqref{a1b} with definitions \eqref{def-phi}-\eqref{def-Phi} and \eqref{a13}.
Next, applying \eqref{mol1} with $k=0$, we get that
\bel{a15}
\norm{g_{1,\sigma}}_{L^1(\R,L^2(\Omega))} \leq C \sigma \norm{A_{2,\sigma}^\sharp-A_2^\sharp}_{L^\infty(\Omega)} \norm{\phi_2}_{L^2(\R^3)} \leq C \sigma^{1 \slash 3} \norm{\phi_2}_{L^2(\R^3)},
\ee
Putting \eqref{l4.1a}--\eqref{a15} together, and recalling \eqref{def-nrm}, we find that
\bel{l4.1c}
\sigma \norm{\psi_{2,\sigma}}_{L^2(Q)} \leq C \left( \norm{\phi_2}_{H^2(\R^3)} + \sigma^{1 \slash 3} \norm{\phi_2}_{L^2(\R^3)} \right) \leq C \left( N_{2,\theta}(\phi_2) + \sigma^{1 \slash 3} N_{0,\theta}(\phi_2) \right).
\ee
It remains to bound $\norm{\nabla \psi_{2,\sigma}}_{L^1(\R,L^2(\Omega))}$ from above. To do that, we apply \cite[Lemma 3.2]{[BD]}, which is permitted since $g_{\sigma}(0,\cdot)=0$, with $\varepsilon=\sigma^{-1}$, getting
\bea
\norm{\nabla \psi_{2,\sigma}(t,\cdot)}_{L^2(\Omega)}  & \leq & C \left( \sigma \norm{g_{\sigma}}_{L^1(0,T;L^2(\Omega))} + \sigma^{-1} \norm{\pd_t g_{\sigma}}_{L^1(0,T;L^2(\Omega))}\right) \nonumber \\
& \leq & C \sum_{m=0,1} \left( \sigma \int_0^T \norm{g_{m,\sigma}(2 \sigma t,\cdot)}_{L^2(\Omega)} \d t
+ \int_0^T \norm{\pd_t g_{m,\sigma}(2\sigma t,\cdot)}_{L^2(\Omega)} \d t \right) \nonumber \\
& \leq & C \sum_{m=0,1} \left( \| g_{m,\sigma} \|_{L^1(\R,L^2(\Omega))} + \| \pd_t g_{m,\sigma} \|_{L^1(\R,L^2(\Omega))} \right), \label{a16}
\eea
for every $t \in (0,T)$, according to \eqref{a12}-\eqref{a13}. Further, as we have
$$ \pd_t g_{0,\sigma}(t,x) = -(\Delta_{A_2}+q_2)  \theta \cdot \left( \nabla_{x'} \phi_2+ i A_{2,\sigma}^\sharp \phi_2 \right)(x'-t \theta,x_3) b_{2,\sigma}(t,x), $$
for a.e. $(t,x) \in \R \times \Omega$, by direct computation, we obtain
\bel{a16b}
\| \pd_t g_{0,\sigma} \|_{L^1(\R,L^2(\Omega))} \leq C N_{2,\theta}(\phi_2),
\ee
from \eqref{def-b}, \eqref{mol2} with $k=2$, \eqref{a1b} with $j=2$, \eqref{def-phi}-\eqref{def-Phi} and \eqref{a12}-\eqref{a13}. 
Similarly, as
$$ \pd_t g_{1,\sigma}(t,x) = -2 \sigma \theta \cdot (A_2^\sharp-A^\sharp_{2,\sigma})(x)  \theta \cdot \left( \nabla_{x'} \phi_2 + i A_{2,\sigma}^\sharp  \phi_2 \right)(x'-t \theta,x_3) b_{2,\sigma}(t,x), $$
for a.e. $(t,x) \in \R \times \Omega$, we find that
\bel{a16c}
\| \pd_t g_{1,\sigma} \|_{L^1(\R,L^2(\Omega))}
\leq C \sigma \norm{A_2^\sharp-A^\sharp_{2,\sigma}}_{L^\infty(\Omega)} N_{0,\theta}(\phi_2)
\leq C \sigma^{1/3}  N_{0,\theta}(\phi_2),
\ee
according to \eqref{mol1} with $j=2$ and $k=0$. Thus, we infer from \eqref{a14}-\eqref{a15} and \eqref{a16}--\eqref{a16c}, that 
$$
\norm{\nabla \psi_{2,\sigma}(t,\cdot)}_{L^2(\Omega)^3}\leq  C \left( N_{2,\theta}(\phi_2) + \sigma^{1 \slash 3} N_{0,\theta}(\phi_2) \right),\ t \in (0,T),\ \sigma > \sigma_*.
$$
This and \eqref{l4.1c} yield \eqref{4.6} with $j=2$, upon recalling the definition \eqref{def-snrm}.
\end{proof}

Let us now establish for further use that we may substitute $\sigma^{-1 \slash 6} u_{j,\sigma}$ for $\psi_{j,\sigma}$ in the estimate \eqref{4.6}.

\begin{Corollary}
\label{cor-1}
For $j=1,2$, let $q_j$, $A_j$, $\phi_j$, and $u_{j,\sigma}$, be the same as in Proposition \ref{pr-4.1}.
Then, there exists a constant $C>0$, depending only on $T$, $\omega$ and $M$, such that
the estimate
\bel{r0}
\sigma \norm{u_{j,\sigma}}_{L^2(Q)}+\norm{\nabla u_{j,\sigma}}_{L^2(Q)^3}
\leq  C \sigma^{1 \slash 6} \cN_{\theta,\sigma}(\phi_j),\ j=1,2,
\ee
holds for all $\sigma > \sigma_*$.
\end{Corollary}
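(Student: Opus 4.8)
The plan is to estimate the three pieces of $u_{j,\sigma}$ separately using the decomposition \eqref{GO}, namely $u_{j,\sigma} = \Phi_j(2\sigma t,\cdot) b_{j,\sigma}(2\sigma t,\cdot) w_\sigma + \psi_{j,\sigma}$ with $w_\sigma(t,x') = e^{i\sigma(x'\cdot\theta - \sigma t)}$. The correction term $\psi_{j,\sigma}$ is already controlled by Proposition \ref{pr-4.1}, which gives $\sigma\norm{\psi_{j,\sigma}}_{L^2(Q)} + \norm{\nabla\psi_{j,\sigma}}_{L^2(Q)^3} \leq C\cN_{\theta,\sigma}(\phi_j)$, and since $\sigma > \sigma_* \geq 1$ this is bounded by $C\sigma^{1/6}\cN_{\theta,\sigma}(\phi_j)$; so the whole issue reduces to bounding the principal term $\Phi_j b_{j,\sigma} w_\sigma$ in the $L^2(Q)$ and $\dot H^1$ norms.

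First I would handle the $L^2(Q)$-norm of the principal term. Since $|w_\sigma| = 1$, we have $\norm{\Phi_j(2\sigma t,\cdot) b_{j,\sigma}(2\sigma t,\cdot) w_\sigma}_{L^2(Q)} \leq \norm{b_{j,\sigma}}_{L^\infty(\R\times\Omega)} \para{\int_0^T \norm{\Phi_j(2\sigma t,\cdot)}_{L^2(\Omega)}^2 \d t}^{1/2}$; using $\norm{b_{j,\sigma}}_{L^\infty} \leq C$ from \eqref{a1b} and the change of variable $s = 2\sigma t$ together with $\Phi_j(s,x',x_3) = \phi_j(x'-s\theta,x_3)$ and the compact support of $\phi_j$ in $\cD_R$ in the $x'$ variable, the time integral contributes a factor $\sigma^{-1/2}$, giving $\norm{\Phi_j b_{j,\sigma} w_\sigma}_{L^2(Q)} \leq C\sigma^{-1/2}\norm{\phi_j}_{L^2(\R^3)} \leq C\sigma^{-1/2} N_{0,\theta}(\phi_j)$. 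Multiplying by $\sigma$ this is $C\sigma^{1/2} N_{0,\theta}(\phi_j) = C\sigma^{1/6}\cdot\sigma^{1/3} N_{0,\theta}(\phi_j) \leq C\sigma^{1/6}\cN_{\theta,\sigma}(\phi_j)$, which is exactly the required form.

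Next I would bound $\norm{\nabla(\Phi_j b_{j,\sigma} w_\sigma)}_{L^2(Q)^3}$. The gradient falls on three factors: the derivative of $w_\sigma$ produces the large factor $\sigma$ (since $\nabla_{x'} w_\sigma = i\sigma\theta\, w_\sigma$), while derivatives on $\Phi_j$ and $b_{j,\sigma}$ are $O(1)$ by \eqref{a1b} and the smoothness of $\phi_j$. Thus the dominant contribution is $\sigma\norm{\Phi_j b_{j,\sigma} w_\sigma}_{L^2(Q)} \leq C\sigma^{1/2} N_{0,\theta}(\phi_j)$ as above (note $\theta\cdot\nablas$ only involves the $x'$-derivative, so $\norm{\theta\cdot\nablas\Phi_j}$-type terms are absorbed into $N_{0,\theta}$ and $N_{2,\theta}$). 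The subdominant terms involving $\nablas\Phi_j$ and $\nabla b_{j,\sigma}$ are bounded by $C\norm{\phi_j}_{H^1(\R^3)}$-type quantities, hence by $C N_{2,\theta}(\phi_j) \leq C\sigma^{1/6}\cN_{\theta,\sigma}(\phi_j)$. Collecting everything, $\sigma\norm{u_{j,\sigma}}_{L^2(Q)} + \norm{\nabla u_{j,\sigma}}_{L^2(Q)^3} \leq C\sigma^{1/6}\cN_{\theta,\sigma}(\phi_j)$.

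I do not expect a serious obstacle here: this is essentially a bookkeeping argument combining the triangle inequality, the unimodularity of $w_\sigma$, the uniform bounds \eqref{a1b} on $b_{j,\sigma}$, the scaling gain $\sigma^{-1/2}$ from the compact support of $\phi_j$ in $\cD_R$ after the change of variables $s = 2\sigma t$, and Proposition \ref{pr-4.1} for $\psi_{j,\sigma}$. The only point requiring a little care is making sure that the factor $\sigma^{1/3}$ in $\cN_{\theta,\sigma}$ (from \eqref{def-snrm}) correctly absorbs the $\sigma^{1/2}$ arising from $\sigma\cdot\sigma^{-1/2}$ times the $L^2$-norm of $\phi_j$; since $\sigma^{1/2} = \sigma^{1/6}\cdot\sigma^{1/3}$, this matches the $\sigma^{1/6}$ prefactor in \eqref{r0} precisely, which also explains the otherwise mysterious exponent $1/6$ in the statement.
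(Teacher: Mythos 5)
Your proposal is correct and follows essentially the same route as the paper: split off the remainder via Proposition \ref{pr-4.1}, use $|w_\sigma|=1$ and the uniform bound \eqref{a1b} on $b_{j,\sigma}$, and gain the factor $\sigma^{-1/2}$ from the change of variable $s=2\sigma t$ combined with the compact support of $\phi_j$ (this is exactly the paper's estimate \eqref{55}), then absorb $\sigma^{1/2}\|\phi_j\|_{L^2}$ into $\sigma^{1/6}\cdot\sigma^{1/3}N_{0,\theta}(\phi_j)\leq\sigma^{1/6}\cN_{\theta,\sigma}(\phi_j)$. Your closing observation about the origin of the exponent $1/6$ matches the paper's computation precisely.
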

\begin{proof}
Notice from \eqref{def-Phi} and \eqref{supp-phi2} that
$$ \int_0^T \| \Phi_j(2 \sigma t, \cdot) \|_{H^k(\Omega)}^2 \d t  = \int_0^{+\infty} \| \Phi_j(2 \sigma t, \cdot) \|_{H^k(\Omega)}^2 \d t 
=  (2 \sigma)^{-1} \int_0^{2R} \| \Phi_j(s, \cdot) \|_{H^k(\Omega)}^2 \d s, $$
so we have
\bel{55}
\| \Phi_j(2 \sigma \cdot, \cdot) \|_{L^2(0,T;H^k(\Omega))} \leq R^{1 \slash 2} \sigma^{-1 \slash 2} \norm{\phi_j}_{H^k(\R^3)},\ j=1,2,\ k \in \mathbb{N}. 
\ee
From this, \eqref{GO}, \eqref{a1b} and \eqref{def-nrm}--\eqref{4.6}, it follows for each $j=1,2$, that
\beas
\| u_{j,\sigma} \|_{L^2(Q)} & \leq & \| b_{j,\sigma} \|_{L^\infty(\R \times \Omega)} \| \Phi_j(2 \sigma \cdot, \cdot) \|_{L^2(Q)} + \| \psi_{j,\sigma} \|_{L^2(Q)} \nonumber \\
& \leq & C \left( \sigma^{-1 \slash 2} \| \phi_j \|_{L^2(\R^3)}  + \sigma^{-1} \cN_{\theta,\sigma}(\phi_j) \right) 
\leq C \sigma^{-5 \slash 6} \cN_{\theta,\sigma}(\phi_j),
\eeas
and
\beas
& & \| \nabla u_{j,\sigma} \|_{L^2(Q)^3} \\
& \leq & \| b_{j,\sigma} \|_{W^{1,\infty}(\R \times \Omega)} \left( \sigma \| \Phi_j(2 \sigma \cdot, \cdot) \|_{L^2(Q)} + \| \Phi_j(2 \sigma \cdot, \cdot) \|_{L^2(0,T;H^1(\Omega))} \right) + 
\| \nabla \psi_{j,\sigma} \|_{L^2(Q)^3} \nonumber \\
& \leq & C \left( \sigma^{1 \slash 2} \| \phi_j \|_{L^2(\R^3)}  + \sigma^{-1 \slash 2} \| \phi_j \|_{H^1(\R^3)} + \cN_{\theta,\sigma}(\phi_j) \right) 
\leq C \sigma^{1 \slash 6} \cN_{\theta,\sigma}(\phi_j),
\eeas
which yields \eqref{r0}.
\end{proof}

In the coming subsection we probe the medium with the GO solutions described in Proposition \ref{pr-4.1} in order to upper bound the transverse magnetic potential in terms of suitable norm of the DN map.

\subsection{Probing the medium with GO solutions}
Let us introduce
\bel{def-Atilde-Asigma}
\tilde{A}^\sharp:=\tilde{A}_2^\sharp-\tilde{A}_1^\sharp,\mbox{and}\ A_\sigma^\sharp:=A_{2,\sigma}^\sharp-A_{1,\sigma}^\sharp,\ \sigma >0,
\ee
where the functions $\tilde{A}_j^\sharp$ and $A_{j,\sigma}^\sharp$, $j=1,2$, are defined in Lemma \ref{ll1m} and Lemma \ref{Lmol}, respectively.
Evidently, $\tilde{A}^\sharp$ is the function $A_2^\sharp - A_1^\sharp$, extended by zero outside $\Omega$, and we have
\bel{mmol1}
\norm{A_\sigma^\sharp - \tilde{A}^\sharp}_{W^{1,\infty}(\R^3)^2} \leq \sum_{j=1,2} \norm{A_{j,\sigma}^\sharp - \tilde{A}_j^\sharp}_{W^{1,\infty}(\R^3)^2} \leq 2 C \sigma^{-1/3},\ 
\sigma >0,
\ee
from \eqref{mol1} with $k=1$. Thus, writing $A_\sigma^\sharp=(a_{1,\sigma},a_{2,\sigma})$ and $\tilde{A}^\sharp=(\tilde{a}_1,\tilde{a}_2)$, it follows readily from \eqref{def-aijsigma}
that
\bel{mol3}
a_{i,\sigma}(x) = \int_{\R^3} \chi_\sigma(x-y) \left( \tilde{a}_i(y) + (x-y) \cdot \nabla \tilde{a}_i(y) \right) \d y,\ x \in \R^3,\ i=1,2.
\ee

The main purpose of this subsection is the following technical result.


\begin{lemma}
\label{L5.1}
Let $M>0$ and $\theta \in\s^{1}$ be fixed.
For $j=1,2$, let $A_j \in W^{2,\infty}(\Omega,\R)^3$, $q_j \in W^{1,\infty}(\Omega,\R^3)$, obey \eqref{cond0}-\eqref{cond1}, and let $\phi_j$ be defined by \eqref{def-phi}.
Then, for every $\sigma > \sigma_*$, there exists a constant $C>0$, depending only on $T$, $\omega$, and $M$, such that we have
\bea
& & \sigma \abs{\int_{(0,T) \times \R^3} \theta \cdot \tilde{A}^\sharp(x) (\overline{\phi}_1 \phi_2)(x'-2 \sigma t \theta,x_3) (\overline{b_{1,\sigma}} b_{2,\sigma})(2\sigma t,x) \d x' \d x_3 \d t} \nonumber \\
& \leq & C \left( \sigma^5 \norm{\Lambda_{A_1,q_1}-\Lambda_{A_2,q_2}} +\sigma^{-5 \slash 6} \right) \cN_{\theta,\sigma}(\phi_1) \cN_{\theta,\sigma}(\phi_2), \label{5.11}
\eea
where $\norm{\cdot}$ stands for the usual norm in $\cB(H^{2,1}(\Sigma),L^2(\Sigma))$, and $\tilde{A}^\sharp$ is given by \eqref{def-Atilde-Asigma}. 
\end{lemma}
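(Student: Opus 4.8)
The plan is to derive \eqref{5.11} by pairing the two families of GO solutions constructed in Proposition \ref{pr-4.1} through the integration-by-parts (Alessandrini-type) identity that links the difference of the DN maps to an interior integral. First I would take $u_{2,\sigma}$ the GO solution for $(A_2,q_2)$ with initial condition $u_{2,\sigma}(0,\cdot)=0$, and $u_{1,\sigma}$ the GO solution for $(A_1,q_1)$ constructed backwards so that $u_{1,\sigma}(T,\cdot)=0$; denote by $f_{j,\sigma}:=u_{j,\sigma|\Sigma}$ their boundary traces. The standard computation — multiply the equation for $u_{2,\sigma}$ by $\overline{u_{1,\sigma}}$, integrate over $Q$, integrate by parts in $t$ and in $x$, and use the vanishing of the initial/terminal data together with the boundary identity for the magnetic normal derivative — produces
\begin{equation}
\int_Q \left( 2i (A_1-A_2)\cdot \nabla u_{2,\sigma}+ (\text{lower order}) \right) \overline{u_{1,\sigma}}\, \d x\, \d t = \langle (\Lambda_{A_1,q_1}-\Lambda_{A_2,q_2}) f_{2,\sigma}, f_{1,\sigma}\rangle_{L^2(\Sigma)},
\end{equation}
where the lower-order terms involve $\nabla\cdot(A_1-A_2)$, $|A_1|^2-|A_2|^2$ and $q_1-q_2$, all multiplied by $u_{2,\sigma}\overline{u_{1,\sigma}}$, hence with no gain in $\sigma$.

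Next I would substitute the explicit ansatz \eqref{GO} for both $u_{j,\sigma}$. The leading contribution on the left comes from the term where $\nabla$ hits the oscillatory factor $e^{i\sigma(x'\cdot\theta-\sigma t)}$, producing a factor $i\sigma\theta$; combined with $2i(A_1-A_2)\cdot$ this gives exactly $-2\sigma\,\theta\cdot\tilde A^\sharp$ times the product of the amplitudes $\Phi_j b_{j,\sigma}$ and the complex exponentials, whose product $w_\sigma\overline{w_\sigma}=1$. (Here one uses $\tilde A^\sharp = A_2^\sharp - A_1^\sharp$ extended by zero, and the fact that the third component $a_{3}$ and the transport equation for $\Phi_j$ make the $\partial_{x_3}$ and $\partial_t$ contributions harmless.) This is precisely, up to the factor $\sigma$ absorbed on the left of \eqref{5.11}, the integral we want to estimate. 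All remaining terms must be shown to be $O(\sigma^{-5/6})\cN_{\theta,\sigma}(\phi_1)\cN_{\theta,\sigma}(\phi_2)$ after division by $\sigma$: the correction terms $\psi_{j,\sigma}$ are controlled by \eqref{4.6}, the full solutions $u_{j,\sigma}$ and their gradients by Corollary \ref{cor-1}, the coefficients $A_j,q_j$ by \eqref{cond0}, and the amplitudes $b_{j,\sigma}$ by \eqref{a1b} and the mollification estimates \eqref{mol1}--\eqref{mol2}. The right-hand side is bounded by $\norm{\Lambda_{A_1,q_1}-\Lambda_{A_2,q_2}}\,\norm{f_{1,\sigma}}_{H^{2,1}(\Sigma)}\norm{f_{2,\sigma}}_{H^{2,1}(\Sigma)}$, and the trace norms $\norm{f_{j,\sigma}}_{H^{2,1}(\Sigma)}$ must be estimated in terms of $\sigma$ and $\cN_{\theta,\sigma}(\phi_j)$; counting the two time derivatives and one space derivative against the oscillation of order $\sigma$, together with the $\sigma$-factor $2\sigma t$ in the argument, accounts for the power $\sigma^4$, which after multiplying by the outer $\sigma$ gives the $\sigma^5$ in \eqref{5.11}.

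The main obstacle I expect is the careful bookkeeping of the $\sigma$-powers: one has to track how many derivatives fall on the oscillatory exponential (each giving $\sigma$) versus on the slowly varying amplitudes (each giving $\cN$- or $N$-type norms), and in particular to verify that every error term genuinely carries a negative power of $\sigma$ after the global division by $\sigma$ — the worst error terms, coming from $g_{1,\sigma}$-type contributions and from the $\sigma^{1/3}N_{0,\theta}$ piece of $\cN_{\theta,\sigma}$, combine with the $\sigma^{1/2}$ loss in $\norm{\nabla u_{j,\sigma}}$ from Corollary \ref{cor-1} and the $\sigma^{-1/2}$ gain from \eqref{55} to land exactly at $\sigma^{-5/6}$. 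A secondary technical point is justifying the integration-by-parts identity itself at the regularity $u_{j,\sigma}\in\cC^1([0,T],L^2(\Omega))\cap\cC([0,T],H^2(\Omega))$ with boundary traces in $H^{2,1}_0(\Sigma)$, so that $\Lambda_{A_j,q_j}$ is applicable and Theorem \ref{Th0} guarantees $\partial_\nu u_{j,\sigma}\in L^2(\Sigma)$; this is routine but needs the density argument already used in the proof of Theorem \ref{Th0}.
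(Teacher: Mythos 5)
Your proposal follows essentially the same route as the paper: the Alessandrini-type orthogonality identity obtained by introducing the auxiliary solution of the $(A_1,q_1)$-problem with boundary trace $u_{2,\sigma}|_\Sigma$ (this is the content of your ``standard computation''; the paper makes it explicit via the functions $v$ and $w=v-u_{2,\sigma}$ in \eqref{eq-upsilon}--\eqref{eq-w}), then extraction of the leading term $i\sigma\,\theta\cdot A^\sharp$ from $2i\,A\cdot\nabla u_{2,\sigma}$ hitting the phase, with all remainders controlled by \eqref{4.6}, Corollary \ref{cor-1}, \eqref{a1b} and \eqref{55}, landing at $\sigma^{-5/6}$ exactly as you predict.

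One concrete correction in your bookkeeping of the DN term: you bound the right-hand side by $\norm{\Lambda_{A_1,q_1}-\Lambda_{A_2,q_2}}\,\norm{f_{1,\sigma}}_{H^{2,1}(\Sigma)}\norm{f_{2,\sigma}}_{H^{2,1}(\Sigma)}$, with \emph{both} traces measured in $H^{2,1}(\Sigma)$. Since each $H^{2,1}(\Sigma)$-norm of a GO trace costs $\sigma^{9/2}$ (two time derivatives of $e^{-i\sigma^2 t}$ and of $\Phi_j(2\sigma t,\cdot)$, partially offset by the $\sigma^{-1/2}$ gain of \eqref{55}), this route gives $\sigma^9$, not the $\sigma^4$ you announce, and hence not the $\sigma^5$ of \eqref{5.11}. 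The operator norm lives in $\cB(H^{2,1}(\Sigma),L^2(\Sigma))$, so the duality pairing only requires the \emph{test} trace $g_\sigma=u_{1,\sigma}|_\Sigma$ in $L^2(\Sigma)$, which costs only $\sigma^{1/2}$; the product $\sigma^{9/2}\cdot\sigma^{1/2}=\sigma^5$ is how the paper reaches the stated exponent. This is a fixable slip rather than a wrong approach, but as written your estimate of the boundary term does not deliver the claimed power of $\sigma$.
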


\begin{proof}
We proceed in two steps. The first step is to establish a suitable orthogonality identity for $A:=A_2-A_1$ and $V :=i \nabla \cdot A-(|A_2|^2-|A_1|^2)+q_2-q_1$, which is
the key ingredient in the derivation of the estimate \eqref{5.11}, presented in the second step.

\noindent {\it Step 1: Orthogonality identity.} We probe the system with the GO functions $u_{j,\sigma}$, $j=1,2$,
given by Proposition \ref{pr-4.1}, and recall for further use that $u_{j,\sigma} \in \cC^1([0,T],L^2(\Omega))\cap \cC([0,T],H^2(\Omega))$ is expressed by \eqref{GO} and satisfies the following equation
\bel{eq-go}
\para{i \pd_t + \Delta_{A_j} + q_j} u_{j,\sigma} =0,\ \mbox{in}\ Q.
\ee


Since $A_{2,\sigma}^\sharp \in W^{2,\infty}(\Omega)^2$ and $\phi_2 \in C_0^\infty(\R^3)$, it follows readily from \eqref{GO}-\eqref{def-b} and \eqref{def-Phi} that $u_{2,\sigma} - \psi_{2,\sigma} \in C^\infty([0,T],W^{2,\infty}(\Omega))$. Thus, $F:= -\para{i \pd_t + \Delta_{A_1} + q_1} (u_{2,\sigma} - \psi_{2,\sigma}) \in W^{1,1}(0,T;L^2(\Omega))$, and there is consequently a unique solution $z \in \cC^1([0,T],L^2(\Omega))\cap \cC([0,T], H_0^1(\Omega) \cap H^2(\Omega))$ to the IBVP
\bel{eq-upsilon}
\left\{\begin{array}{ll}
\para{i \pd_t + \Delta_{A_1} + q_1} z = F, & \mbox{in}\ Q, \\
z(0,\cdot)=0, & \mbox{in}\ \Omega, \\
z=0,  & \mbox{on}\ \Sigma ,
\end{array}
\right.
\ee
in virtue of Lemma \ref{ll2}. Further, as $(u_{2,\sigma} - \psi_{2,\sigma})(0,\cdot)=0$ in $\Omega$, by \eqref{def-Phi}-\eqref{supp-phi1}, we infer from \eqref{eq-upsilon} that 
$v:=z + u_{2,\sigma} - \psi_{2,\sigma} \in \cC^1([0,T],L^2(\Omega))\cap \cC([0,T], H^2(\Omega))$ verifies
\bel{5.2}
\left\{
\begin{array}{ll}
\para{i \pd_t +\Delta_{A_1} + q_1} v =0, & \mbox{in}\ Q,\\
v(0,\cdot)=0, & \mbox{in}\ \Omega,\\
v=f_{\sigma}, & \mbox{on}\ \Sigma,
\end{array}
\right.
\ee
where we have set
\bel{def-fsigma}
f_\sigma(t,x) := u_{2,\sigma}(t,x) = u_{2,\sigma}(t,x) - \psi_{2,\sigma}(t,x) = (\Phi_2  b_{2,\sigma})(2\sigma t,x) e^{i\sigma(x' \cdot \theta - \sigma t)},\ (t,x) \in \Sigma.
\ee
From this and Proposition \ref{pr-4.1}, it then follows that $w :=v-u_{2,\sigma}$ is the $\cC^1([0,T],L^2(\Omega))\cap \cC([0,T], H_0^1(\Omega) \cap H^2(\Omega))$-solution to
the IBVP
\bel{eq-w}
\left\{\begin{array}{ll}
\para{i \pd_t + \Delta_{A_1} + q_1}w = 2 i A \cdot \nabla u_{2,\sigma} + V u_{2,\sigma}, & \mbox{in}\ Q, \\
w(0,\cdot)=0, & \mbox{in}\ \Omega, \\
w=0,  & \mbox{on}\ \Sigma ,
\end{array}
\right.
\ee
In light of \eqref{eq-w}, we deduce from \eqref{eq-go} with $j=1$, upon applying the Green formula, that
\bea
\langle 2i A \cdot \nabla u_{2,\sigma} + V u_{2,\sigma}, u_{1,\sigma} \rangle_{L^2(Q)} & = & \langle \para{i \pd_t + \Delta_{A_1}+q_1} w , u_{1,\sigma} \rangle_{L^2(Q)} \nonumber \\
& = & \langle \para{\pd_\nu + i A_1 \cdot \nu} w , u_{1,\sigma} \rangle_{L^2(\Sigma)}. \label{5.4}
\eea
Next, taking into account that $A_1=A_2$ on $\pd \Omega$, from \eqref{cond1}, we see that
\beas 
\para{\pd_\nu + i A_1 \cdot \nu} w & = & \para{\pd_\nu + i A_1 \cdot \nu} v - \para{\pd_\nu + i A_1 \cdot \nu} u_{2,\sigma} \\
& = & \para{\pd_\nu + i A_1 \cdot \nu} v - \para{\pd_\nu + i A_2 \cdot \nu} u_{2,\sigma} \\
& = & ({\Lambda_{A_1,q_1}-\Lambda_{A_2,q_2}})f_{\sigma},
\eeas
according to \eqref{def-fsigma} and the last line of \eqref{5.2}. This and \eqref{5.4} yield the following orthogonality identity
\bel{5.5}
2i \langle A \cdot \nabla u_{2,\sigma} , u_{1,\sigma} \rangle_{L^2(Q)} + \langle  V u_{2,\sigma} , u_{1,\sigma} \rangle_{L^2(Q)} = \langle ({\Lambda_{A_1,q_1}-\Lambda_{A_2,q_2}})f_{\sigma}, g_\sigma \rangle_{L^2(\Sigma)},
\ee
with
\bel{def-gsigma}
g_\sigma (t,x) := u_{1,\sigma}(t,x) = u_{1,\sigma}(t,x) - \psi_{1,\sigma}(t,x) = (\Phi_1 b_{1,\sigma})(2 \sigma t,x) e^{i\sigma(x'\cdot\theta-2\sigma t)},\ (t,x) \in \Sigma.
\ee
Having established \eqref{5.5}, we turn now to proving the estimate \eqref{5.11}.\\

\noindent {\it Step 2: Derivation of \eqref{5.11}.}
In light of \eqref{GO}, we have
\bel{5.6}
\langle A \cdot \nabla u_{2,\sigma} , u_{1,\sigma} \rangle_{L^2(Q)} = I_{\sigma} + i \sigma \int_Q \theta \cdot A^\sharp(x) (\overline{\Phi}_1 \Phi_2 )(2\sigma
t,x) (\overline{b_{1,\sigma}} b_{2,\sigma})(2\sigma t,x) \d x \d t, 
\ee
with
\beas
I_\sigma & := &  \int_Q A \cdot \nabla(\Phi_2 b_{2,\sigma})(2\sigma t,x) \left( \overline{(\Phi_1 b_{1,\sigma})}(2\sigma t,x) + e^{i\sigma(x' \cdot \theta - \sigma t)} \overline{\psi_{1,\sigma}}(t,x) \right) \d x \d t \nonumber \\
& & +  \int_Q A \cdot \nabla \psi_{2,\sigma}(t,x) \left(e^{i\sigma(x' \cdot \theta - \sigma t)} \overline{(\Phi_1 b_{1,\sigma})}(2\sigma t,x)  + \overline{\psi_{1,\sigma}}(t,x) \right) \d x \d t \nonumber \\ 
& & + i \sigma \int_Q \theta \cdot A^\sharp(x) (\Phi_2 b_{2,\sigma})(2\sigma t,x) \overline{\psi_{1,\sigma}}(t,x) e^{i \sigma(x' \cdot \theta - \sigma t)} \d x \d t. 
\eeas
We infer from \eqref{a1b}, \eqref{4.6}, and \eqref{55}, that
$$
\abs{I_{\sigma}} \leq C \sigma^{-5/6} \cN_{\theta,\sigma}(\phi_1) \cN_{\theta,\sigma}(\phi_2),\ \sigma > \sigma_*. 
$$
Putting this together with \eqref{5.5} and \eqref{5.6}, we find that
\bea
& & \sigma \abs{\int_Q \theta \cdot A^\sharp(x) (\phi_2 \overline{\phi}_1)(x'-2\sigma t \theta,x_3) (b_{2,\sigma}\overline{b_{1,\sigma}})(2\sigma t,x) \d x' \d x_3 \d t} \nonumber \\
& \leq & C \left( \abs{\langle V u_{2,\sigma} ,u_{1,\sigma} \rangle_{L^2(Q)}} + \abs{\langle ({\Lambda_{A_1,q_1}-\Lambda_{A_2,q_2}})f_{\sigma}, g_\sigma \rangle_{L^2(\Sigma)}}
 + \sigma^{-5/6} \cN_{\theta,\sigma}(\phi_1) \cN_{\theta,\sigma}(\phi_2)  \right). \label{5.8}
\eea
Next, we notice from \eqref{r0} that
\bel{5.9}
\abs{\langle V u_{2,\sigma} , u_{1,\sigma} \rangle_{L^2(Q)}} \leq C \sigma^{-5 \slash 3} \cN_{\theta,\sigma}(\phi_1) \cN_{\theta,\sigma}(\phi_2).
\ee
Moreover, in view of \eqref{def-fsigma} and \eqref{def-gsigma}, we have
\beas
\abs{\langle ({\Lambda_{A_1,q_1}-\Lambda_{A_2,q_2}})f_{\sigma}, g_\sigma \rangle_{L^2(\Sigma)}} 
& \leq & \norm{\Lambda_{A_1,q_1}-\Lambda_{A_2,q_2}} \norm{f_\sigma}_{H^{2,1}(\Sigma)}\norm{g_\sigma}_{L^2(\Sigma)} \nonumber \\
& \leq & \norm{\Lambda_{A_1,q_1}-\Lambda_{A_2,q_2}} \norm{u_{2,\sigma}-\psi_{2,\sigma}}_{H^{2,1}(\Sigma)} \norm{u_{1,\sigma}-\psi_{1,\sigma}}_{L^2(\Sigma)}, 
\eeas
with
\beas
\norm{u_{1,\sigma}-\psi_{1,\sigma}}_{L^2(\Sigma)}  & \leq & \norm{u_{1,\sigma}-\psi_{1,\sigma}}_{L^2(0,T;H^1(\Omega))} \nonumber \\
& \leq & C \sigma \norm{\Phi_1(2 \sigma \cdot, \cdot)}_{L^2(0,T;H^1(\Omega))} \norm{b_{1,\sigma}}_{W^{1,\infty}(\R \times \Omega)} 
\nonumber \\
& \leq & C \sigma^{1 \slash 2} \cN_{\theta,\sigma}(\phi_1),
\eeas
and
\beas
\norm{u_{2,\sigma}-\psi_{2,\sigma}}_{H^{2,1}(\Sigma)} & \leq & 
C \left( \norm{u_{2,\sigma}-\psi_{2,\sigma}}_{H^2(0,T;H^1(\Omega))} + \norm{u_{2,\sigma}-\psi_{2,\sigma}}_{L^2(0,T;H^2(\Omega)} \right) \nonumber \\
& \leq & C  \sigma^5 \norm{\Phi_2(2 \sigma \cdot, \cdot)}_{L^2(0,T;H^2(\Omega))} \left(\norm{b_{2,\sigma}}_{W^{2,\infty}(\R \times \Omega)} + \norm{\pd_t b_{2,\sigma}}_{W^{2,\infty}(\R \times \Omega)} \right)  \nonumber \\
& \leq & C \sigma^{9 \slash 2} \cN_{\theta,\sigma}(\phi_2),
\eeas
according to \eqref{GO}, \eqref{a1b}, \eqref{def-nrm}, and \eqref{55}. As a consequence, we have
\bel{a20}
\abs{\langle (\Lambda_{A_1,q_1}-\Lambda_{A_2,q_2})f_{\sigma}, g_\sigma \rangle_{L^2(\Sigma)}} 
\leq C \sigma^5 \norm{\Lambda_{A_1,q_1}-\Lambda_{A_2,q_2}} \cN_{\theta,\sigma}(\phi_1) \cN_{\theta,\sigma}(\phi_2) .
\ee
This and \eqref{5.8}-\eqref{5.9} yield \eqref{5.11}. 
\end{proof}

\section{Preliminary estimates}
\setcounter{equation}{0}

\subsection{$X$-ray transform}

In this subsection we estimate the partial $X$-ray transform in $\R^3$, of the functions
\bel{6.1}
\tilde{\rho}_j(x',x_3) := \theta \cdot \frac{\pd \tilde{A}^\sharp}{\pd x_j}(x)=\sum_{i=1,2} \theta_i \frac{\pd \tilde{a}_i}{\pd x_j}(x),\ x \in \R^3,\ j=1,2,3,
\ee
in terms of the DN map. We recall that the partial $X$-ray transform in the direction $\theta \in \mathbb{S}^{1}$, of a function 
\bel{def-X}
f \in \sX : = \{ \varphi \in L_{\mathrm{loc}}^1(\R^3);\ x' \mapsto \varphi(x',x_3) \in L^1(\R^2)\ \mbox{for a.e.}\ x_3 \in \R \},
\ee
is defined as
\bel{def-P}
\cP(f)(\theta,x',x_3):= \int_\R f(x' + s \theta , x_3) \d s,\ x'\in \R^2,\ x_3 \in \R.
\ee


The $X$-ray transform stability estimate is as follows.

\begin{lemma}
\label{L6.2}
Let $M>0$, and let $A_j$ and $q_j$, for $j=1,2$, be as in Proposition \ref{pr-4.1}.
Then, there exists a constant $C>0$, depending only on $T$, $\omega$ and $M$, such that for all
$\theta \in \s^{1}$, all $\xi' \in \R^2$, and all $\phi \in C_0^\infty(\R^3)$ satisfying
$\supp\ \phi(\cdot,x_3) \subset \cD_R^-(\theta) := \{ x' \in \cD_R,\ x' \cdot \theta \leq 0 \}$ for every $x_3 \in \R$,
the estimate
\bea
& & \abs{\int_{\R^3} \phi^2(x) \cP(\tilde{\rho}_j)(\theta, x',x_3)
\exp\para{-i\int_\R \theta \cdot A_\sigma^\sharp (x'+s\theta,x_3) \d s} \d x} \nonumber \\
& \leq & C \left( \sigma^5 \norm{\Lambda_{A_1,q_1}-\Lambda_{A_2,q_2}} +\sigma^{-5/6} \right) \cN_{\theta,\sigma}(\phi) \cN_{\theta,\sigma}(\pd_{x_j} \phi),
\label{66.2}
\eea
holds uniformly in $\sigma >\sigma_*$ and $j=1,2,3$. 
\end{lemma}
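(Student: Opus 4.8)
\emph{The plan.} I will deduce \eqref{66.2} from Lemma \ref{L5.1} by a change of variables converting the space--time integral in \eqref{5.11} into one involving the partial $X$-ray transform $\cP$. Concretely, I apply Lemma \ref{L5.1} with the two test functions $\phi_1:=\overline{\phi}$ and $\phi_2:=\pd_{x_j}\phi$: both lie in $\cC_0^\infty(\R^3)$ and satisfy \eqref{def-phi}, since $\supp\,\phi(\cdot,x_3)\subset\cD_R^-(\theta)\subset\cD_R$ and $\supp\,\pd_{x_j}\phi\subset\supp\,\phi$, while $\cN_{\theta,\sigma}(\overline\phi)=\cN_{\theta,\sigma}(\phi)$. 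In the left-hand side of \eqref{5.11} I then substitute successively $\tau=2\sigma t$, then (at fixed $\tau$ and $x_3$) $y'=x'-\tau\theta$, and finally $s\mapsto\tau-s$ in the phase; recalling that $(\overline{b_{1,\sigma}}b_{2,\sigma})(2\sigma t,x)=\exp\!\big(-i\int_0^{2\sigma t}\theta\cdot A_\sigma^\sharp(x'-s\theta,x_3)\,\d s\big)$ with $A_\sigma^\sharp=A_{2,\sigma}^\sharp-A_{1,\sigma}^\sharp$, this turns \eqref{5.11} into a bound for $\big|\int_{\R^3}(\phi\,\pd_{x_j}\phi)(y',x_3)\,G_\sigma(y',x_3)\,\d y'\,\d x_3\big|$, where $G_\sigma(y',x_3):=\int_0^{2\sigma T}\theta\cdot\tilde{A}^\sharp(y'+\tau\theta,x_3)\exp\!\big(-i\int_0^\tau\theta\cdot A_\sigma^\sharp(y'+s\theta,x_3)\,\d s\big)\,\d\tau$. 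Since $\phi(\cdot,x_3)$ is supported in $\cD_R^-(\theta)$ and $\tilde{A}^\sharp$, together with $A_\sigma^\sharp$ for $\sigma\geq1$, are supported in $\overline{B(0,R)}\times\R$, the $\tau$-integrand vanishes for $\tau\leq0$ and for $\tau$ larger than a fixed multiple of $R$; as $\sigma>\sigma_*$ pushes $2\sigma T$ beyond that threshold, $\int_0^{2\sigma T}$ may be replaced by $\int_\R$ on $\supp\,\phi$ without changing the value of $G_\sigma$.

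The next step is to identify $G_\sigma$. Splitting $\theta\cdot\tilde{A}^\sharp=\theta\cdot A_\sigma^\sharp+\theta\cdot(\tilde{A}^\sharp-A_\sigma^\sharp)$, observing that $\theta\cdot A_\sigma^\sharp(y'+\tau\theta,x_3)$ is the $\tau$-derivative of $\int_0^\tau\theta\cdot A_\sigma^\sharp(y'+s\theta,x_3)\,\d s$, and that this inner phase vanishes for $\tau\leq0$ and equals $\Psi_\sigma(y',x_3):=\cP(\theta\cdot A_\sigma^\sharp)(\theta,y',x_3)$ for $\tau$ large, an integration in $\tau$ gives $G_\sigma=i(e^{-i\Psi_\sigma}-1)+R_\sigma$ on $\supp\,\phi$, with $\|R_\sigma\|_{L^\infty}\leq C\sigma^{-1/3}$ by \eqref{mmol1}. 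Differentiating in $x_j$ and using that $\cP$ commutes with $\pd_{x_j}$ acting on the variables $(x',x_3)$ yields $\pd_{x_j}G_\sigma=\cP(\theta\cdot\pd_{x_j}A_\sigma^\sharp)(\theta,\cdot)\,e^{-i\Psi_\sigma}+\pd_{x_j}R_\sigma$; finally, replacing $\pd_{x_j}A_\sigma^\sharp$ by $\pd_{x_j}\tilde{A}^\sharp$ at the expense of an $L^\infty$-error again bounded by $C\sigma^{-1/3}$ via \eqref{mmol1}, and recalling \eqref{6.1}, one arrives at $\pd_{x_j}G_\sigma=\cP(\tilde{\rho}_j)(\theta,\cdot)\,e^{-i\Psi_\sigma}+E_\sigma+\pd_{x_j}R_\sigma$ with $\|E_\sigma\|_{L^\infty}\leq C\sigma^{-1/3}$.

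It then remains to pair this identity with $\phi^2$. Because $\phi$ is compactly supported, $\int\phi^2\,\pd_{x_j}G_\sigma=-2\int(\phi\,\pd_{x_j}\phi)\,G_\sigma$, which by the first step is bounded by $C\big(\sigma^5\norm{\Lambda_{A_1,q_1}-\Lambda_{A_2,q_2}}+\sigma^{-5/6}\big)\cN_{\theta,\sigma}(\phi)\cN_{\theta,\sigma}(\pd_{x_j}\phi)$. The remainder $\int\phi^2\,\pd_{x_j}R_\sigma=-2\int(\phi\,\pd_{x_j}\phi)\,R_\sigma$ is controlled by $\|\phi\|_{L^2}\|\pd_{x_j}\phi\|_{L^2}\|R_\sigma\|_{L^\infty}$, and $\int\phi^2\,E_\sigma$ is handled in the same spirit: writing $E_\sigma$, up to the unimodular factor $e^{-i\Psi_\sigma}$, as $\pd_{x_j}$ of $\cP(\theta\cdot(A_\sigma^\sharp-\tilde{A}^\sharp))$, whose sup-norm is $\leq C\sigma^{-1/3}$ by \eqref{mmol1}, one integration by parts in $x_j$ moves that derivative onto $\phi^2$. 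Using the inequalities $\|\phi\|_{L^2},\,\|\pd_{x_j}\phi\|_{L^2}\leq\sigma^{-1/3}\cN_{\theta,\sigma}(\,\cdot\,)$ — immediate from \eqref{def-snrm} — together with Poincar\'e's inequality in the bounded cross-section, every remainder is $\leq C\sigma^{-1}\cN_{\theta,\sigma}(\phi)\cN_{\theta,\sigma}(\pd_{x_j}\phi)\leq C\sigma^{-5/6}\cN_{\theta,\sigma}(\phi)\cN_{\theta,\sigma}(\pd_{x_j}\phi)$ since $\sigma>\sigma_*\geq1$, and \eqref{66.2} follows.

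\emph{Main obstacle.} The delicate part is the accounting of the smoothing errors: one has to check that each contribution coming from substituting $A_\sigma^\sharp$ for $\tilde{A}^\sharp$ in both the amplitude and the phase of $G_\sigma$ is, after the integrations by parts above, genuinely of order $\sigma^{-5/6}$ relative to $\norm{\Lambda_{A_1,q_1}-\Lambda_{A_2,q_2}}$ — and not merely $O(\sigma^{-1/3})$ — the extra negative powers of $\sigma$ being produced precisely by the $\sigma^{1/3}$-weighted $L^2$-norms built into $\cN_{\theta,\sigma}$; closely related is the justification, from the support hypothesis on $\phi$ and the bound $\sigma>\sigma_*$, that the finite $\tau$-integral defining $G_\sigma$ coincides exactly with the full-line integral producing $\cP$.
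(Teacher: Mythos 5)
Your proposal is correct and follows essentially the same route as the paper: apply Lemma \ref{L5.1} to the pair $\{\phi,\pd_{x_j}\phi\}$ (the paper takes $\phi_1=\pd_{x_j}\overline{\phi}$, $\phi_2=\phi$, giving the same product $\phi\,\pd_{x_j}\phi$), change variables so that the amplitude $\theta\cdot A_\sigma^\sharp$ becomes a perfect $\tau$-derivative of the phase, use the support of $\phi$ in $\cD_R^-(\theta)$ and $\sigma>\sigma_*$ to replace the truncated line integrals by $\cP$, integrate by parts in $x_j$ to trade $\phi\,\pd_{x_j}\phi$ for $\phi^2\,\cP(\rho_{j,\sigma})$, and absorb the mollification errors via \eqref{mol1}--\eqref{mmol1}; the only organizational difference is that you keep the amplitude-mollification error inside $G_\sigma$ while the paper peels it off first as \eqref{66.1}. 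The one loose point — your integration by parts on $E_\sigma$ ignores the $x_j$-dependence of $e^{-i\Psi_\sigma}$, and the final conversion of $\|\phi\|_{L^2}^2$-type remainders into $\cN_{\theta,\sigma}(\phi)\cN_{\theta,\sigma}(\pd_{x_j}\phi)$ via Poincar\'e is only transparent for $j=1,2$ — is shared by the paper's own closing estimate and is harmless for the test functions actually used in Lemma \ref{L7.1}.
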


\begin{proof}
Let $\phi_j \in \cC_0^\infty(\R^3)$, $j=1,2$, be supported in $\cD_R \times \R$.
Bearing in mind that $\overline{\tilde{\omega}} \subset B(0,R-1)$, we infer from \eqref{mol3} that $\tilde{A}^\sharp$ and $A_\sigma^\sharp$ are both supported in $B(0,R) \times \R$. Further, as $|x'- 2 \sigma t \theta| > 2 \sigma_* T - R > R+1$ for all $x'\in B(0,R)$ and $t>T$, we see that
$$
\tilde{A}^\sharp(x) (\overline{\phi}_1 \phi_2)(x'-2\sigma t\theta,x_3) = A_\sigma^\sharp(x)(\overline{\phi}_1 \phi_2)(x'- 2 \sigma t \theta,x_3)=0,\ x=(x',x_3) \in\R^3,\ t >T, 
$$
As a consequence we have
\beas
& & \int_0^T \int_{\R^3} \theta \cdot \left( \tilde{A}^\sharp(x)-A_\sigma^\sharp(x) \right) (\overline{\phi_1} \phi_2)(x'-2\sigma t\theta,x_3) (\overline{b_{1,\sigma}} b_{2,\sigma})(2\sigma t,x) \d x \d t \\
& = & \int_0^{+\infty} \int_{\R^3} \theta \cdot \left( \tilde{A}^\sharp(x)-A_\sigma^\sharp(x) \right) (\overline{\phi_1} \phi_2)(x'-2\sigma t\theta,x_3) (\overline{b_{1,\sigma}} b_{2,\sigma})(2\sigma t,x) \d x \d t.
\eeas
Next, making the substitution $s=\sigma t$ in the above integral, we get that
\beas
& & \sigma \abs{\int_0^T \int_{\R^3} \theta \cdot \left( \tilde{A}^\sharp(x)-A_\sigma^\sharp(x) \right) (\overline{\phi_1} \phi_2)(x'-2\sigma t\theta,x_3) (\overline{b_{1,\sigma}} b_{2,\sigma})(2\sigma t,x) \d x \d t} \\
& = & \abs{\int_0^{+\infty}\int_{B(0,R) \times \R} \theta \cdot \left( \tilde{A}^\sharp(x)-A_\sigma^\sharp(x) \right) (\overline{\phi_1} \phi_2)(x'-2 s \theta,x_3) (\overline{b_{1,\sigma}} b_{2,\sigma})(2 s,x) \d x \d s} \\
& \leq & \norm{\tilde{A}^\sharp-A_\sigma^\sharp}_{L^\infty (\R^3)^2} \int_0^{R+1}\int_{B(0,R) \times \R} \abs{(\overline{\phi}_1 \phi_2)(x'-2s\theta,x_3)} \d x \d s \\
& \leq & \norm{\tilde{A}^\sharp-A_\sigma^\sharp}_{L^\infty (\R^3)^2} \int_0^{R+1}\int_{\R^3} \abs{(\overline{\phi_1} \phi_2)(x'-2s\theta,x_3)} \d x \d s \\
& \leq & (R+1) \norm{\tilde{A}^\sharp-A_\sigma^\sharp}_{L^\infty (\R^3)^2} \| \phi_1 \|_{L^2(\R^3)} \| \phi_2 \|_{L^2(\R^3)}.
\eeas
From this and \eqref{mol1} with $k=0$, and \eqref{def-nrm}, it follows that
\bea
& & \sigma \abs{\int_0^T \int_{\R^3} \theta \cdot \left( \tilde{A}^\sharp(x)-A_\sigma^\sharp(x) \right) (\overline{\phi_1} \phi_2)(x'-2\sigma t\theta,x_3) (\overline{b_{1,\sigma}} b_{2,\sigma})(2\sigma t,x) \d x \d t} \nonumber \\
& \leq & C \sigma^{-2 \slash 3} \norm{\phi_1}_{L^2(\R^3)} \norm{\phi_2}_{L^2(\R^3)} \leq C \sigma^{-4/3} \cN_{\theta,\sigma}(\phi_1) \cN_{\theta,\sigma}(\phi_2). \label{66.1} 
\eea
On the other hand, since
\beas
(\overline{b_{1,\sigma}} b_{2,\sigma})(2\sigma t,x'+2 \sigma t \theta,x_3) & = & \exp\para{-i \int_0^{2\sigma t} \theta \cdot A_\sigma^\sharp(x'+(2 \sigma t -s)\theta,x_3) \d s} \\
& =& \exp\para{-i \int_0^{2\sigma t} \theta \cdot A_\sigma^\sharp(x'+s\theta,x_3) \d s},
\eeas 
for a.e. $(t,x) \in (0,T) \times \R^3$, we have
\bea
& & \sigma \int_0^T \int_{\R^3} \theta\cdot A_\sigma^\sharp(x) (\overline{\phi_1} \phi_2)(x'- 2 \sigma t \theta,x_3)
(\overline{b_{1,\sigma}} b_{2,\sigma})(2\sigma t,x) \d x' \d x_3 \d t \nonumber \\
& = & \sigma \int_0^T \int_{\R^3} \theta \cdot A_\sigma^\sharp(x'+2 \sigma t \theta,x_3) (\overline{\phi_1} \phi_2)(x) (\overline{b_{1,\sigma}} b_{2,\sigma})(2\sigma t, x' + 2 \sigma
t \theta,x_3) \d x' \d x_3 \d t \nonumber \\
& = & \int_{\R^3} (\overline{\phi}_1 \phi_2)(x) \left( \int_0^T  \sigma \theta \cdot A_\sigma^\sharp(x'+2\sigma t\theta,x_3) \exp\para{-i\int_0^{2\sigma t} \theta \cdot
A_\sigma^\sharp(x'+s\theta,x_3) \d s} \d t \right) \d x' \d x_3 \nonumber \\
& = & \frac{i}{2} \int_{\R^3} (\overline{\phi}_1 \phi_2)(x) \left( \int_0^T \frac{\d}{\d t} \exp \para{-i \int_0^{2\sigma t} \theta \cdot
A_\sigma^\sharp(x'+s\theta,x_3) \d s} \d t \right) \d x' \d x_3 \nonumber \\ 
& = & \frac{i}{2}\int_{\R^3} (\overline{\phi}_1 \phi_2)(x) \left( \exp\para{-i\int_0^{2\sigma
T}\theta\cdot A^\sharp_\sigma(x'+s\theta,x_3) \d s}-1 \right) \d x' \d x_3. \label{6.2}
\eea
As $A_\sigma^\sharp$ is supported in $B(0,R) \times \R$ and $|x' + s  \theta| > 2 \sigma_* T - (R+1) > R$, for all $x' \in \cD_R$ and all $s > 2 \sigma T$, we have
\bel{a22}
\int_0^{2\sigma T} \theta \cdot A_\sigma^\sharp (x'+s\theta,x_3) \d s =\int_0^{+\infty} \theta \cdot A_\sigma^\sharp(x'+s\theta,x_3) \d s,\ x' \in \cD_R,\ x_3 \in \R.
\ee
Similarly, as $\abs{x'+s\theta}^2=\abs{x'}^2+s^2+2sx'\cdot \theta > R^2$, for every $x'\in \cD_R^-(\theta)$ and $s < 0$, we have
$$ \int_{-\infty}^0 \theta \cdot A_\sigma^\sharp(x'+s\theta,x_3) \d s = 0,\ x' \in \cD_R^-(\theta),\ x_3 \in \R. $$
This and \eqref{a22} entail
\bel{6.3}
\int_0^{2\sigma T} \theta \cdot A_\sigma^\sharp(x'+s\theta,x_3) \d s = \int_\R \theta \cdot A_\sigma^\sharp(x'+s\theta,x_3) \d s,\ x' \in \cD_R^-(\theta),\ x_3 \in \R.
\ee
Having seen this, we take $\phi_1:=\pd_{x_j} \overline{\phi}$, for $j=1,2,3$, and $\phi_2:=\phi$, in \eqref{6.2}, and find
\bea
& & \sigma \int_0^T \int_{\R^3} \theta \cdot A_\sigma^\sharp(x) (\overline{\phi_1} \phi_2)(x'-2\sigma t\theta,x_3)
(\overline{b_{1,\sigma}} b_{2,\sigma})(2\sigma t,x) \d x' \d x_3 \d t \nonumber \\
& = & \frac{i}{4}\int_{\R^3} \pd_{x_j} \phi^2(x)  \left( \exp\para{-i\int_0^{2\sigma
T}\theta\cdot A^\sharp_\sigma(x'+s\theta,x_3) \d s}-1 \right) \d x' \d x_3 \nonumber \\
& = & -\frac{1}{4} \int_{\R^3} \phi^2(x) \left( \int_0^{2\sigma T} \theta \cdot
\pd_{x_j} A_\sigma^\sharp(x'+s\theta,x_3) \d s \right) \exp \para{-i \int_0^{2\sigma T} \theta \cdot
A_\sigma^\sharp(x'+s\theta,x_3) \d s} \d x, \label{a21}
\eea
upon integrating by parts. Taking into account that $\phi$ is supported in $\cD_R^-(\theta) \times \R$, we deduce from \eqref{6.3} and \eqref{a21}, that
\bea
& & \sigma \int_0^T \int_{\R^3} \theta \cdot A_\sigma^\sharp(x) (\overline{\phi_1} \phi_2)(x'-2\sigma t\theta,x_3)
(\overline{b_{1,\sigma}} b_{2,\sigma})(2 \sigma t,x) \d x' \d x_3 \d t \nonumber \\
& =& -\frac{1}{4} \int_{\R^3} \phi^2(x) \para{\int_\R \theta \cdot \pd_{x_j} A_\sigma^\sharp(x'+s\theta,x_3) \d s} \exp\para{-i \int_\R \theta \cdot A_\sigma^\sharp(x'+s\theta,x_3) \d s} \d x' \d x_3 \nonumber \\
& = & -\frac{1}{4} \int_{\R^3} \phi^2(x) \cP(\rho_{j,\sigma})(\theta,x',x_3)
\exp\para{-i \int_\R \theta \cdot A_\sigma^\sharp(x'+s\theta,x_3) \d s} \d x' \d x_3. \label{6.4}
\eea
Here we used \eqref{def-P} and the notation
$$
\rho_{j,\sigma}(x):= \theta \cdot \pd_{x_j} A_\sigma^\sharp(x) = \sum_{i=1,2} \theta_i \pd_{x_j} a_{i,\sigma}(x),\ x \in\R^3,\ j=1,2,3.
$$
Finally, using once more that the functions $A_\sigma^\sharp$ and $\tilde{A}^{\sharp}$ are supported in $B(0,R)$, we infer from \eqref{mmol1} and \eqref{6.1}-\eqref{def-P}, that
$$ \abs{\left( \cP(\rho_{j,\sigma})-\cP(\tilde{\rho}_j) \right)(\theta,x',x_3)} \leq C \sigma^{-1/3},\ (x',x_3) \in B(0,R) \times \R, $$
for some positive constant $C$, depending only on $\omega$ and $M$. This entails that
$$ \abs{\int_{\R^3} \phi^2(x) \left( \cP(\rho_{j,\sigma})-\cP(\tilde{\rho}_j) \right)(\theta,x',x_3)
\exp \para{-i \int_\R \theta \cdot A_\sigma^\sharp(x'+s\theta,x_3) \d s} \d x' \d x_3} \leq C \sigma^{-1/3} \norm{\phi}_{L^2(\R^3)}^2, $$
which, 
together with \eqref{5.11},\eqref{66.1}, and \eqref{6.4}, yields \eqref{66.2}.
\end{proof}

As will be seen in the coming section, the result of Lemma \ref{L6.2} is a key ingredient in the estimation of the partial Fourier transform of the aligned magnetic field, in terms of the DN map. To this purpose, we recall for all $f \in \sX$, where $\sX$ is defined in \eqref{def-X}, that the partial Fourier transform with respect to $x' \in \R^2$ of $f$, expresses as
\bel{def-FT}
\widehat{f}(\xi',x_3) :=(2\pi)^{-1}\int_{\R^2} f(x',x_3) e^{-i x'\cdot \xi'} \d x',\ \xi' \in \R^2,\ x_3 \in \R.
\ee
Further, setting $\theta^\perp:= \{ x' \in \R^2;\ x ' \cdot \theta =0 \}$, we recall for further use from \cite[Lemma 6.1]{[BC2]}, that $x' \mapsto \cP(f)(\theta,x',x_3) \in L^1(\theta^\perp)$ for a.e. $x_3 \in \R$, and that
\bel{a30}
\widehat{\cP(f)}(\theta,\xi',x_3) := (2\pi)^{-{1 \slash 2}} \int_{\theta^\perp} \cP(f)(\theta,x',x_3) e^{-ix'\cdot \xi'} \d x' = (2\pi)^{1 \slash 2} \widehat{f}(\xi',x_3),\ \xi'\in \theta^\perp,\ x_3 \in \R.
\ee

\subsection{Aligned magnetic field estimation}

Let us now estimate the Fourier transform of the aligned magnetic field
\bel{def-beta}
\tilde{\beta}(x) :=\left( \pd_{x_1} \tilde{a}_2 - \pd_{x_2} \tilde{a}_1 \right)(x),\ x \in \R^3,
\ee
with the aid of Lemma \ref{L6.2}. More precisely, we aim to establish the following result.
\begin{lemma}
\label{L7.1}
Let $M>0$, and let $A_j$ and $q_j$, for $j=1,2$, be as in Proposition \ref{pr-4.1}.
Then, there exist two constants $\epsilon \in (0,1)$ and $C>0$, both of them depending only on $T$, $\omega$, and $M$, such that the estimates
\bel{7.5}
\norm{\widehat{\beta}(\xi',\cdot)}_{L^\infty(\R)}\leq C \langle \xi' \rangle^7 \para{\sigma^6 \norm{\Lambda_{A_1,q_1}-\Lambda_{A_2,q_2}} + \sigma^{-\epsilon}},
\ee
and
\bel{7.6aa}
\norm{\pd_{x_3} \widehat{\beta}(\xi',\cdot)}_{L^\infty(\R)}\leq C \langle \xi' \rangle^8 \para{\sigma^6\norm{\Lambda_{A_1,q_1}-\Lambda_{A_2,q_2}} + \sigma^{-\epsilon}},
\ee
hold for all $\sigma >\sigma_*$ and all $\xi'\in\R^2$, with $\seq{\xi'}:=\left( 1 + |\xi'|^2 \right)^{1 \slash 2}$.
\end{lemma}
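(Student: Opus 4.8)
The plan is to deduce \eqref{7.5}--\eqref{7.6aa} from Lemma \ref{L6.2} by a careful choice of the function $\phi$, combined with the Fourier slice identity \eqref{a30}. First I would reduce both estimates to bounds on the partial Fourier transforms $\widehat{\tilde{\rho}_j}(\xi',x_3)$, $j=1,2,3$. Since $\tilde{a}_1,\tilde{a}_2$ are compactly supported in $x'$, we have $\int_{\R^2}\tilde{\beta}(x',x_3)\,dx'=0$, so $\widehat{\tilde{\beta}}(0,\cdot)\equiv 0$ and the case $\xi'=0$ is trivial. For $\xi'\neq 0$ put $\theta=\theta(\xi'):=|\xi'|^{-1}(-\xi_2',\xi_1')\in\s^{1}$, so that $\xi'\in\theta^\perp$ and $\theta\cdot\tilde{A}^\sharp=|\xi'|^{-1}(\xi_1'\tilde{a}_2-\xi_2'\tilde{a}_1)$. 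Taking the Fourier transform \eqref{def-FT} in $x'$ and using $\tilde{\rho}_j=\theta\cdot\partial_{x_j}\tilde{A}^\sharp=\partial_{x_j}(\theta\cdot\tilde{A}^\sharp)$ (recall $\theta$ is constant), one obtains $\widehat{\tilde{\beta}}(\xi',x_3)=i|\xi'|\,\widehat{\theta\cdot\tilde{A}^\sharp}(\xi',x_3)$, $\widehat{\tilde{\rho}_j}(\xi',x_3)=i\xi_j'\,\widehat{\theta\cdot\tilde{A}^\sharp}(\xi',x_3)$ for $j=1,2$, and $\widehat{\tilde{\rho}_3}(\xi',x_3)=\partial_{x_3}\widehat{\theta\cdot\tilde{A}^\sharp}(\xi',x_3)$; hence $|\widehat{\tilde{\beta}}(\xi',x_3)|\leq|\widehat{\tilde{\rho}_1}(\xi',x_3)|+|\widehat{\tilde{\rho}_2}(\xi',x_3)|$ and $|\partial_{x_3}\widehat{\tilde{\beta}}(\xi',x_3)|=|\xi'|\,|\widehat{\tilde{\rho}_3}(\xi',x_3)|$. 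It thus suffices to bound $|\widehat{\tilde{\rho}_j}(\xi',x_3)|$ for every $x_3\in\R$ and $j\in\{1,2,3\}$ by $C\langle\xi'\rangle^{7}(\sigma^6\norm{\Lambda_{A_1,q_1}-\Lambda_{A_2,q_2}}+\sigma^{-\epsilon})$.

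Fix $\xi'\neq 0$, $\theta=\theta(\xi')$, $\tau\in\R$ and $\sigma>\sigma_*$ large. The weight $\exp(-i\int_\R\theta\cdot A_\sigma^\sharp(x'+s\theta,x_3)\,ds)$ in \eqref{66.2} equals $e^{-i\cP(\theta\cdot A_\sigma^\sharp)(\theta,x',x_3)}$, and the idea is to build $\phi$ so that $\phi^2$ cancels this oscillation. Since the unmollified phase $\cP(\theta\cdot\tilde{A}^\sharp)$ is only $W^{2,\infty}$ while $\cN_{\theta,\sigma}(\partial_{x_j}\phi)$ involves up to four derivatives of $\phi$, and the standard smoothing $A_\sigma^\sharp$ satisfies $\norm{A_\sigma^\sharp}_{W^{4,\infty}}\sim\sigma^{2/3}$ (which would destroy the decay of the error), I would put in the phase of $\phi$ a \emph{coarser} approximation $A_{\sigma'}^\sharp$ with $\sigma':=\sqrt{\sigma}$, so that $\norm{A_{\sigma'}^\sharp}_{W^{k,\infty}}\leq C\sigma^{(k-2)/6}$ by Lemma \ref{Lmol} while $\norm{A_\sigma^\sharp-A_{\sigma'}^\sharp}_{L^\infty(\R^3)^2}\leq C\sigma^{-1/3}$. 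Choose $\chi=\chi_\theta\in\cC_0^\infty(\R^2)$ supported in $\cD_R^-(\theta)$ (uniformly in $\theta$) and normalized so that $\int_\R\chi(y\theta+z\theta^\perp)^2\,dy=1$ for $|z|\leq R$, fix $\varrho_0\in\cC_0^\infty((-1,1))$ with $\int\varrho_0^2=1$, set $w:=\sigma^{-\eta}$ for a small fixed $\eta\in(0,1/12]$, and put
$$\phi(x',x_3):=\exp\!\Big(\tfrac{i}{2}\cP(\theta\cdot A_{\sigma'}^\sharp)(\theta,x',x_3)\Big)\,e^{-ix'\cdot\xi'/2}\,\chi(x')\,w^{-1/2}\varrho_0\!\Big(\tfrac{x_3-\tau}{w}\Big)\ \in\ \cC_0^\infty(\R^3),$$
which satisfies $\supp\phi(\cdot,x_3)\subset\cD_R^-(\theta)$ and, crucially, $\phi^2e^{-i\cP(\theta\cdot A_\sigma^\sharp)}=e^{-ix'\cdot\xi'}\chi(x')^2w^{-1}\varrho_0((x_3-\tau)/w)^2\,(1+\mathcal{E}_\sigma)$ with $\norm{\mathcal{E}_\sigma}_{L^\infty}\leq C\sigma^{-1/3}$.

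Plugging this $\phi$ into \eqref{66.2}, the left-hand side is, up to an error $\leq C\sigma^{-1/3}$, equal to $|\int_{\R^3}e^{-ix'\cdot\xi'}\chi(x')^2w^{-1}\varrho_0((x_3-\tau)/w)^2\cP(\tilde{\rho}_j)(\theta,x',x_3)\,dx|$; writing $x'=y\theta+z\theta^\perp$, using that $\cP(\tilde{\rho}_j)(\theta,\cdot,x_3)$ depends on $x'$ only through $z=x'\cdot\theta^\perp$ and vanishes for $|z|\geq R-1$, the normalization of $\chi$ and \eqref{a30}, this integral equals $2\pi\int_\R w^{-1}\varrho_0((x_3-\tau)/w)^2\widehat{\tilde{\rho}_j}(\xi',x_3)\,dx_3$. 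On the other hand, since $A_{\sigma'}^\sharp$, $\chi$ are $W^{k,\infty}$-bounded by $C\sigma^{(k-2)/6}$, resp. $C$, each $x'$-derivative of $e^{-ix'\cdot\xi'/2}$ costs a factor $\langle\xi'\rangle$, and $\|w^{-1/2}\varrho_0(\cdot/w)\|_{H^2(\R)}\leq Cw^{-2}=C\sigma^{2\eta}$, a direct count gives $\cN_{\theta,\sigma}(\phi)\leq C\langle\xi'\rangle^{3}\sigma^{1/3}$ and $\cN_{\theta,\sigma}(\partial_{x_j}\phi)\leq C\langle\xi'\rangle^{4}\sigma^{1/3}$ for $j=1,2$, resp. $\cN_{\theta,\sigma}(\partial_{x_3}\phi)\leq C\langle\xi'\rangle^{3}\sigma^{1/3+\eta}$, for $\eta$ small. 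Combining with \eqref{66.2} (and using $17/3<6$, $\sigma\geq1$) yields $|\int_\R w^{-1}\varrho_0((x_3-\tau)/w)^2\widehat{\tilde{\rho}_j}(\xi',x_3)\,dx_3|\leq C\langle\xi'\rangle^{7}(\sigma^6\norm{\Lambda_{A_1,q_1}-\Lambda_{A_2,q_2}}+\sigma^{-1/6})$ for $j=1,2$ (with $\langle\xi'\rangle^{6}$ in place of $\langle\xi'\rangle^{7}$ for $j=3$). Finally, since $\tilde{A}^\sharp\in W^{2,\infty}(\R^3)^2$ is compactly supported in $x'$, the map $x_3\mapsto\widehat{\tilde{\rho}_j}(\xi',x_3)$ is Lipschitz with a constant $\leq C(\omega,M)$ independent of $\xi'$, so $|\widehat{\tilde{\rho}_j}(\xi',\tau)-\int_\R w^{-1}\varrho_0((x_3-\tau)/w)^2\widehat{\tilde{\rho}_j}(\xi',x_3)\,dx_3|\leq Cw=C\sigma^{-\eta}$. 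Putting everything together and taking the supremum over $\tau$, one gets $\|\widehat{\tilde{\rho}_j}(\xi',\cdot)\|_{L^\infty(\R)}\leq C\langle\xi'\rangle^{7}(\sigma^6\norm{\Lambda_{A_1,q_1}-\Lambda_{A_2,q_2}}+\sigma^{-\eta})$ for $j=1,2$ and the same with $\langle\xi'\rangle^{6}$ for $j=3$; inserting this into the reduction of the first paragraph gives \eqref{7.5}, and multiplying the $j=3$ bound by $|\xi'|$ gives \eqref{7.6aa}, with $\epsilon=\eta$.

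The delicate part is the construction of $\phi$, which must simultaneously be smooth and supported in $\cD_R^-(\theta)$, cancel the oscillatory weight of \eqref{66.2} up to an error tending to $0$ (which forces a mollified magnetic phase — and the smoothing scale must be chosen \emph{coarser} than the one used in the GO construction, to keep $\cN_{\theta,\sigma}(\partial_{x_j}\phi)$ from growing too fast because of $\norm{A_\sigma^\sharp}_{W^{4,\infty}}$), and localize in $x_3$ at a $\sigma$-dependent scale so that the pointwise-in-$x_3$ value of $\widehat{\tilde{\rho}_j}$ can be extracted with a vanishing error while the Sobolev norms of $\phi$ stay under control. Balancing these three competing quantities — the phase-cancellation error $O(\sigma^{-1/3})$, the localization error $O(\sigma^{-\eta})$, and the norm growth $O(\sigma^{2/3})$ against the gain $\sigma^{-5/6}$ from Lemma \ref{L6.2} — is precisely what yields an admissible exponent $\epsilon\in(0,1)$ together with the polynomial weights $\langle\xi'\rangle^{7}$ and $\langle\xi'\rangle^{8}$.
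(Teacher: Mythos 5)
Your proposal is correct and follows essentially the same route as the paper: it feeds into Lemma \ref{L6.2} a test function carrying the factor $e^{-ix'\cdot\xi'/2}$, half of a mollified magnetic phase (so that $\phi^2$ cancels the oscillatory weight), and an $L^2$-normalized $x_3$-localizer at scale $\sigma^{-\eta}$, then uses the Fourier slice identity \eqref{a30} and the relations $\widehat{\tilde{\rho}}_j=\pm(\xi_j/|\xi'|)\widehat{\tilde{\beta}}$, $\partial_{x_3}\widehat{\tilde{\beta}}=i|\xi'|\widehat{\tilde{\rho}}_3$ for $\theta\perp\xi'$. The only (harmless) deviations are cosmetic: you use a single normalized cutoff in place of the paper's partition of unity $\{\varphi_k\}$ together with the bump $h$ along $\theta$, and you mollify the phase at the coarser scale $\sqrt{\sigma}$, whereas the paper keeps the scale $\sigma$ and exploits that $\theta\cdot\nabla_{x'}$ annihilates $\int_{\R}\theta\cdot A^\sharp_\sigma(x'+s\theta,x_3)\,\d s$, so that at most three derivatives ever fall on the magnetic phase and the feared $\|A^\sharp_\sigma\|_{W^{4,\infty}}\sim\sigma^{2/3}$ never enters.
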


\begin{proof}
We shall only prove \eqref{7.5}, the derivation of \eqref{7.6aa} being obtained in a similar fashion.

Fix $\theta \in \mathbb S^1 \cap \xi'^\perp$. We first introduce the following partition of $B(0,R) \cap \theta^\perp$. For $N \in \mathbb{N}^*:=\{1,2,\ldots \}$ fixed, we pick $x_1',\ldots,x_N'$ in $B(0,R+1 \slash 2) \cap \theta^\perp$, and choose $\varphi_1,\ldots,\varphi_N$ in $\cC_0^\infty(\R^2,[0,1])$, such that
\bel{partition}
\supp\ \varphi_k \subset  B(x_k', 1 \slash 8) \cap \theta^\perp\ \mbox{for}\ k=1,\ldots,N,\ \mbox{and}\
\sum_{k=1}^N \varphi_k(x')=1\ \mbox{for}\ x' \in B(0,R) \cap \theta^\perp.
\ee
Next, we set $r_{x_k'}:=\left( \para{R+3 \slash 4}^2-\abs{x_k'}^2 \right)^{1 \slash 2}$, in such a way that
\bel{a23}
B(x_k'- r_{x_k'} \theta,1 \slash 4) \subset \cD^-_R(\theta),\ k=1,\ldots,N.
\ee
In order to define a suitable set of test functions $\phi_{*,k}$, $k=1,\ldots,N$, we fix $x_3 \in \R$, pick a function $\alpha \in \cC_0^\infty(\R,\R_+)$ which is supported in $(-1,1)$ and normalized in $L^2(\R)$, and put
\bel{def-alpha}
\alpha_{\sigma}(s) := \sigma^\mu \alpha \left( \sigma^{2\mu} (x_3-s) \right),\ s \in \R,
\ee
for some positive real parameter $\mu$, we shall make precise below. Then, the test function $\phi_{*,k}$ is defined for all $y= (y',y_3) \in \R^3$, by
\bel{7.1}
\phi_{*,k}(y) := h \left( y' \cdot \theta + r_{x'_k} \right) e^{-\frac{i}{2} y' \cdot \xi'} \varphi_k^{1/2}(y'-(y' \cdot \theta)\theta)
\exp \para{\frac{i}{2} \int_\R \theta \cdot A_\sigma^\sharp(y'+s \theta , y_3) \d s} \alpha_{\sigma}(y_3),
\ee
where $h \in \cC_0^\infty(\R)$ is supported in $(0,1/8)$, and normalized in $L^2(\R)$.

For every $y' \in \R^2 \setminus B(x_k'-r_{x_k'}\theta,1 \slash 4)$, it is easily seen from the basic inequality 
$$|y'-(x_k'-r_{x_k'} \theta)| \leq |y'-(y'\cdot\theta)\theta-x_k'|+|y' \cdot \theta+r_{x_k'}|, $$ 
that either of the two real numbers $|y'- (y' \cdot \theta) \theta-x_k'|$ or $|y' \cdot \theta+r_{x_k'}|$ is greater than $1 \slash 8$, and hence that
$h(y' \cdot \theta + r_{x_k'}) \varphi_k^{1/2}(y'- (y'\cdot \theta) \theta)=0$. As a consequence, we have
\bel{a25}
\supp\ \phi_{*,k}(\cdot,y_3) \subset B \left( x_k'-r_{x_k'} \theta , 1 \slash 4 \right) \subset \cD^-_R(\theta),\ y_3 \in \R,\ k=1,\ldots,N,
\ee
directly from \eqref{a23} and \eqref{7.1}.
Moreover, since
$$
\theta \cdot \nabla_{y'} \para{\int_\R \theta \cdot A_\sigma^\sharp(y'+s\theta,y_3) \d s} 
= \theta  \cdot \int_{\R} \frac{\d}{\d s} A_\sigma^\sharp(y'+s\theta,y_3) \d s =0,\ (y',y_3) \in \R^3,
$$
we derive from Lemma \ref{Lmol} for all $m \in \mathbb{N}$, that 
$$
\seq{\xi'} \norm{\phi_{*,k}}_{H^m(\R^3)}  + 
\norm{\theta \cdot \nabla_{x'} \phi_{*,k}}_{H^m(\R^3)} \leq C \seq{\xi'}^{m+1} \sigma^{2\mu m + \max \left( 0,(m-2) \slash 3 \right)},
$$
where $C$ is a positive constant, independent of $\sigma$. Therefore, we have $N_{0,\theta}(\phi_{*,k}) \leq C \seq{\xi'}$ and $N_{2,\theta}(\phi_{*,k}) \leq C \seq{\xi'}^3 \sigma^{4 \mu}$, whence
\bel{a23b}
\cN_{\theta,\sigma}(\phi_{*,k}) \leq C \seq{\xi'}^3 \sigma^{4 \mu + 1 \slash 3}.
\ee
Similarly, we find that
$$
\seq{\xi'} \norm{\pd_{x_j} \phi_{*,k}}_{H^m(\R^3)} + 
\norm{\theta \cdot \nabla_{x'} \pd_{x_j} \phi_{*,k}}_{H^m(\R^3)} \leq C \seq{\xi'}^{m+2} \sigma^{2 \mu m + \max \left( 0,(m-1) \slash 3 \right)},\ j=1,2,
$$
and
$$
\seq{\xi'} \norm{\pd_{x_3} \phi_{*,k}}_{H^m(\R^3)} +  
\norm{\theta \cdot \nabla_{x'} \pd_{x_3} \phi_{*,k}}_{H^m(\R^3)} \leq C \seq{\xi'}^{m+1} \sigma^{2 \mu (m+1) + \max \left( 0,(m-1) \slash 3 \right)}.
$$
Thus, we have $N_{0,\theta}(\pd_{x_j} \phi_{*,k}) \leq C \seq{\xi'}^2 \sigma^{2 \mu}$ and $N_{2,\theta}(\pd_{x_j} \phi_{*,k}) \leq C \seq{\xi'}^4 \sigma^{6 \mu+ 1 \slash 3}$, for $j=1,2,3$, and consequently
$$
\cN_{\theta,\sigma}(\pd_{x_j} \phi_{*,k}) \leq C \seq{\xi'}^4 \sigma^{6 \mu + 1 \slash 3},\ j=1,2,3.
$$
according to \eqref{def-nrm}. From this and \eqref{a23b}, it then follows that
\bel{a24}
\cN_{\theta,\sigma}(\phi_{*,k}) \cN_{\theta,\sigma}(\pd_{x_j} \phi_{*,k}) \leq  C \seq{\xi'}^7 \sigma^{10 \mu + 2 \slash 3},\ j=1,2,3.
\ee

Having seen this, we turn now to estimating $\widehat{\tilde{\rho}_j}$, where $\tilde{\rho}_j$ is defined by \eqref{6.1}. 
As $A_\sigma^\sharp\in W^{\infty,\infty}(\R^3,\R)^2$, we infer form \eqref{7.1} that $\phi_{*,k} \in \cC_0^\infty(\R^3)$, and from \eqref{a25} that $\supp\ \phi_{*,k} \subset \cD_R \times \R$.
Thus, by performing the change of variable $y'=x'+t \theta \in \theta^\perp \oplus \R\theta$, in the following integral, we deduce from \eqref{def-alpha}-\eqref{7.1} that
\bea
& & \int_\R \int_{\R^2} \phi_{*,k}^2(y',y_3) \cP(\tilde{\rho}_j)(\theta,y',y_3)
\exp \para{-i \int_\R \theta \cdot A_\sigma^\sharp(y'+s\theta,y_3) \d s} \d y' \d y_3 \nonumber \\
& = & \int_{\R} \int_{\R} \int_{\theta^\perp} \phi_{*,k}^2(x'+t\theta,y_3) \cP(\tilde{\rho}_j)(\theta,x'+t \theta,y_3)
\exp \para{- i \int_\R \theta \cdot A_\sigma^\sharp(x'+s\theta,y_3) \d s} \d x' \d t \d y_3 \nonumber \\ 
& = & \int_{\R} \int_{\R} \int_{\theta^\perp} h^2(t+r_{x_k'})e^{-i x' \cdot \xi'} \varphi_k(x') \alpha_{\sigma}^2(y_3) \cP(\tilde{\rho}_j)(\theta,x',y_3) \d x' \d t \d y_3 \nonumber \\
& = & \int_\R \int_{\theta^\perp} e^{-i y'\cdot\xi'} \varphi_k(y') \alpha_{\sigma}^2(y_3) \cP(\tilde{\rho}_j)(\theta,y',y_3) \d y' \d y_3. \label{7.3}
\eea
Thus, taking $\mu>0$ so small that $\kappa:=1/6-10\mu>0$, we deduce from this, \eqref{66.2}, and \eqref{a24}, that
\bea
& & \abs{\int_\R \int_{\theta^\perp} e^{-i y' \cdot \xi'} \varphi_k(y') \alpha_{\sigma}^2(y_3) \cP(\tilde{\rho}_j)(\theta,y',y_3) \d y' \d y_3} \nonumber \\
& \leq & C \langle \xi' \rangle^7 \para{\sigma^6 \norm{\Lambda_{A_1,q_1}-\Lambda_{A_2,q_2}} + \sigma^{-\kappa}},\ x_3 \in \R. \label{dd}
\eea
Moreover, we see from \eqref{6.1} that $\tilde{\rho}_j \in \cC^{0,1}(\R^3)$. Since $\supp\ \tilde{\rho}_j \subset B(0,R) \times \R$, by Lemma \ref{ll1m}, then $x \mapsto \cP(\tilde{\rho}_j)(\theta,x) \in \cC^{0,1}(\R^3)$, and we deduce from \eqref{mol2} upon making the substitution $s=\sigma^{2\mu}(x_3-y_3)$ in the following integral, that
\beas
& & \abs{\int_\R \int_{\theta^\perp} e^{-i y'\cdot\xi'} \varphi_k(y') \cP(\tilde{\rho}_j)(\theta,y',y_3) \alpha_{\sigma}^2(y_3) \d y' \d y_3 - \int_{\theta^\perp} e^{-i y' \cdot \xi'} \varphi_k(y') \cP(\tilde{\rho}_j)(\theta,y',x_3) \d y'} \\
& = & \abs{\int_\R \int_{\theta^\perp} e^{-iy' \cdot \xi'} \varphi_k(y') \alpha^2(s) \left( \cP(\tilde{\rho}_j)(\theta,y',x_3-\sigma^{-2\mu}s)-\cP(\tilde{\rho}_j)(\theta,y',x_3)\right) \d y' \d s} \\
& \leq & \int_{-1}^1 \int_{\theta^\perp \cap B(0,R+1)} \alpha^2(s) \abs{\cP(\tilde{\rho}_j)(\theta,y',x_3-\sigma^{-2\mu}s)-\cP(\tilde{\rho}_j)(\theta,y',x_3)} \d y' \d s \leq C \sigma^{-2\mu},
\eeas
for some constant $C>0$ depending only on $\omega$ and $M$. Here, we used the fact that $\phi_{*,k}$ and $\alpha$ are supported in $B(0,R+1)$ and $(-1,1)$, respectively. This and \eqref{dd} yield 
\bel{a26}
\abs{\int_{\theta^\perp} e^{-i y'\cdot \xi'} \varphi_k(y') \cP(\tilde{\rho}_j)(\theta,y',x_3) \d y'}
\leq
C \langle \xi' \rangle^7\para{\sigma^6 \norm{\Lambda_{A_1,q_1}-\Lambda_{A_2,q_2}}+ \sigma^{-2\mu}+\sigma^{-\kappa}},
\ee
for all $x_3 \in \R$ and $k=1,...,N$.
Further, as $A^\sharp$ is supported in $B(0,R) \times \R$ by assumption, it holds true that $\pd_{x_j} A^\sharp(y'+ s \theta,x_3) = 0$ for all $s \in \R$, $x_3 \in \R$, and all $y' \in \theta^\perp$ such that $|y'| \geq R$. Therefore, we have
$$ \cP(\tilde{\rho}_j)(\theta,y',x_3)=0,\ y' \in \theta^\perp \cap (\R^2 \setminus B(0,R)),\ x_3 \in \R, $$
in virtue of \eqref{6.1}, and hence
$$ \int_{\theta^\perp} e^{-i y'\cdot \xi'} \cP(\tilde{\rho}_j)(\theta,y',x_3) \d y' 
= \int_{\theta^\perp \cap B(0,R)} e^{-i y'\cdot \xi'} \cP(\tilde{\rho}_j)(\theta,y',x_3) \d y',\ x_3 \in \R.
$$
In light of \eqref{a30} and \eqref{partition}, this entails that
\bel{a27}
\widehat{\tilde{\rho}}_j(\xi',x_3) = \frac{1}{2 \pi} \sum_{k=1}^N \int_{\theta^\perp \cap B(0,R)} e^{-i y'\cdot \xi'} \varphi_k(y') \cP(\tilde{\rho}_j)(\theta,y',x_3) \d y',\ x_3 \in \R. 
\ee
Taking $\mu \in (0,1 \slash 72]$, in such a way that we have $\kappa \geq 2 \mu$, we infer from \eqref{a26}-\eqref{a27} that
\bea
\norm{\widehat{\tilde{\rho}}_j(\xi',\cdot)}_{L^\infty(\R)} & \leq & \sum_{k=1}^N \left( \sup_{x_3 \in \R} \abs{\int_{\theta^\bot}e^{-i y'\cdot\xi'}\varphi_k(y') \cP(\tilde{\rho}_j)(\theta,y',x_3)dy'} \right) \nonumber \\
& \leq &  C \langle \xi' \rangle^7 \para{\sigma^6 \norm{\Lambda_{A_1,q_1}-\Lambda_{A_2,q_2}} + \sigma^{-2\mu}},\ x_3 \in \R. \label{7.4}
\eea
The last step of the proof is to notice from \eqref{6.1}, \eqref{def-FT}, and the identity $\sum_{m=1,2} \theta_m \xi_m = \theta \cdot \xi'= 0$, that
$$
\widehat{\tilde{\rho}}_j(\xi',x_3)=i \sum_{m=1,2} \theta_m \xi_j \widehat{\tilde{a}}_m(\xi',x_3)
= i \sum_{m=1,2} \theta_m \para{\xi_j \widehat{\tilde{a}}_m -\xi_m
\widehat{\tilde{a}}_j}(\xi',x_3),\ x_3 \in \R,\ j=1,2.
$$
Thus, assuming that $\xi'=(\xi_1,\xi_2) \in \R^2 \setminus \{ 0 \}$, we get from \eqref{def-beta} upon 
choosing $\theta=(\xi_2 \slash \abs{\xi'},-\xi_1 \slash \abs{\xi'})$, that
$$
\widehat{\tilde{\rho}}_j(\xi',x_3)=-\frac{\xi_j}{\abs{\xi'}}\widehat{\tilde{\beta}}(\xi',x_3),\ x_3 \in \R.
$$
From this and \eqref{7.4}, it then follows that
$$
\norm{\widehat{\tilde{\beta}}(\xi',\cdot)}_{L^\infty(\R)} \leq \frac{\abs{\xi_1}+\abs{\xi_2}}{\abs{\xi'}} \norm{\widehat{\tilde{\beta}}(\xi',\cdot)}_{L^\infty(\R)} \leq C \langle \xi' \rangle^7 \para{\sigma^6 \norm{\Lambda_{A_1,q_1}-\Lambda_{A_2,q_2}} + \sigma^{-2\mu}},
$$
which yields \eqref{7.5} for $\xi' \neq 0$. Since $\widehat{\tilde{\beta}}(0,x_3)=0$ for every $x_3 \in \R$, from \eqref{def-beta}, then \eqref{7.5} holds for $\xi'=0$ as well, and the proof is complete.
\end{proof}

Armed with Lemma \ref{L7.1}, we turn now to proving the three main results of this paper.

\section{Proof of Theorems \ref{Th1}, \ref{Th2} and \ref{Th22}}
\setcounter{equation}{0}
Let us start by reducing the analysis of the inverse problem under investigation to the case of transverse magnetic potentials.
To do that, we consider $A'=(a_i')_{1 \leq i \leq 3} \in \cA$, and put $A:=(a_{1},a_2,0)$, where
\bel{p1}
a_i(x',x_3):= a_i'(x',x_3)- \int_{-\infty}^{x_3} \pd_{x_i} a_3' (x',s) \d s,\ x=(x',x_3) \in \omega \times \R,\ i=1,2.
\ee
Since $a_3' \in C^3(\overline{\Omega})$ fulfills \eqref{decay}-\eqref{bord}, from the very definition of $\cA$, we have
$a_3' \in L_{x_3}^1(\R, H_0^3(\omega))$, where $H_0^3(\omega)$ denotes the closure of $\cC^\infty_0(\omega)$ in $H^3(\omega)$.
Thus, $e(x) := \int_{-\infty}^{x_3} a_{3}'(x',s) \d s$ lies in $W^{3,\infty}(\Omega)\cap L_{x_3}^\infty(\R, H_0^3(\omega))$, and we deduce from the identity 
$A=A'-\nabla e$, arising from \eqref{p1}, that 
$$
\d A' = \d A,\ \mbox{and}\ \Lambda_{A_*+A',q}=\Lambda_{A_*+A,q},
$$
for all $A_*\in W^{2,\infty}(\Omega)^3$ and all $q \in W^{1,\infty}(\Omega)$. 
Moreover, it is easy to see that $A$ obeys \eqref{decay}, in the sense that we have
\bel{cond1-b}
\pd_x^\alpha A(x)=0,\ x \in \pd \Omega,\ \alpha \in \mathbb N^3,\ |\alpha | \leq 1.
\ee
Therefore, for each $A_* \in W^{2,\infty}(\Omega,\R)^3$ and any $A_j \in A_*+\cA$, $j=1,2$, we may assume without loss of generality, that the difference $A_2-A_1$ reads 
\bel{p5}
A=(a_1,a_2,0),
\ee
and fulfills \eqref{cond1-b}. We shall systematically do that in the sequel. For further reference, we put $A^\sharp:=(a_1,a_2)$, where $a_j$, $j=1,2$, are extended by zero outside $\Omega$. Summing up, we have

\subsection{Proof of Theorem \ref{Th1}}
We establish the uniqueness result $(\d A_1,q_1)=(\d A_2, q_2)$ in Subsection \ref{sec-uniqueness-th1}, while
the proof of the stability estimate \eqref{t3bb} can be found in Subsection \ref{sec-stability-th1}.

\subsubsection{Uniqueness result}
\label{sec-uniqueness-th1}
For $\xi'=(\xi_1,\xi_2)\in \R^2 \setminus \{0\}$, we set $\xi'_\perp:=(- |\xi'|^{-1} \xi_2 , |\xi'|^{-1} \xi_1)$, and we decompose $A^\sharp$ into the sum $(A^\sharp \cdot \xi') |\xi'|^{-2} \xi'+ (A^\sharp \cdot \xi_\perp') \xi_\perp'$, in such a way that the partial Fourier transform 
of $\pd_{x_3} A^\sharp$, reads
\bel{p2}
\pd_{x_3} \widehat{A^\sharp}(\xi',x_3) = \left(\frac{1}{2\pi} \int_{\R^2} e^{-ix'\cdot\xi'} \pd_{x_3} A^\sharp(x',x_3) \cdot \xi' \d x' \right) \frac{\xi'}{|\xi'|^2} +
i \frac{\pd_{x_3} \widehat{\beta}(\xi',x_3)}{|\xi'|} \xi'_\perp,\ x_3 \in \R.
\ee
Next, recalling the hypothesis $\Lambda_{A_1,q_1}=\Lambda_{A_2,q_2}$, we get
\bel{t2b}
\beta = \pd_{x_1} a_2-\pd_{x_2} a_1=0\ \mbox{in}\ \Omega,
\ee
upon sending $\sigma$ to infinity in \eqref{7.5}. Moreover, we have $\nabla_{x'} \cdot \pd_{x_3} A^\sharp=\nabla \cdot \pd_{x_3} A=0$, in virtue of \eqref{t3a}, whence
\bea
\int_{\R^2} e^{-i x' \cdot \xi'} \pd_{x_3} A^\sharp(x',x_3) \cdot \xi' \d x' & = &  i \int_{\R^2} \nabla_{x'} e^{-i x' \cdot \xi'} \cdot \pd_{x_3} A^\sharp(x',x_3) \nonumber \\
& = & - i \int_{\R^2} e^{-ix'\cdot\xi'} \nabla_{x'} \cdot \pd_{x_3} A^\sharp(x',x_3) \d x'=0. \label{p2b}
\eea
Putting this together with \eqref{p2}-\eqref{t2b}, we find that $| \xi' | \pd_{x_3} \widehat{A^\sharp}(\xi',x_3)=0$ for a.e. $x_3 \in \R$. Since $\xi'$ is arbitrary in $\R^2 \setminus \{0\}$, this entails that $\pd_{x_3} A^\sharp=0$, and hence that $\pd_{x_3} a_1=\pd_{x_3} a_2=0$ in $\R^2$. From this, \eqref{t2b}, and the fact that $a_3$ is uniformly zero, it then follows that $\d A_1= \d A_2$. 

Further, taking into account that $\pd_x^\alpha A_1=\pd_x^\alpha A_2=\pd_x^\alpha A_*$ on $\pd \Omega$, for every $\alpha \in \mathbb N^3$ such that $|\alpha|\leq 1$,
we infer that $A \in W^{2,\infty}(\R^3,\R)^3$. This and the identity $\d A=0$, yield $A=\nabla \Psi$, where the function
$\Psi(x)=\int_0^1 x \cdot A(t x) \d t$ lies in $W^{3,\infty}(\R^3,\R)$. Moreover, since $A$ vanishes in $\R^3 \setminus \Omega$, we may assume upon possibly adding a suitable constant, that the same is true for $\Psi$. Therefore, $\Psi_{|\pd \Omega}=0$, and we find
$\Lambda_{A_2,q_2}=\Lambda_{A_2+ \nabla \Psi,q_2}=\Lambda_{A_1,q_2}$,
by combining the identity $A_1=A_2+\nabla\Psi$ with the gauge invariance property of the DN map.
From this and the assumption $\Lambda_{A_1,q_1}=\Lambda_{A_2,q_2}$, it then follows that
\bel{p3}
\Lambda_{A_1,q_2}=\Lambda_{A_1,q_1}.
\ee
It remains to show that the function $q=q_2-q_1$, duly extended by zero outside $\Omega$, is uniformly zero in $\R^3$.
This can be done upon applying the orthogonality identity \eqref{5.5} with $A_1=A_2$, i.e with $A=0$ and $V=q$. In light of \eqref{p3}, we obtain that
\bel{p4}
\langle q u_{2,\sigma} , u_{1,\sigma} \rangle_{L^2(Q)} = 0,\ \sigma>\sigma_*.
\ee
Here $u_{j,\sigma}$, for $j=1,2$, is given by \eqref{GO}, and we have $(\overline{b_{1,\sigma}}b_{2,\sigma})(t,x)=1$ for all $(t,x) \in (0,T) \times \R^3$, from \eqref{def-b}, since $A_1=A_2$. 

Next, pick $\phi \in \cC_0^{\infty}(\R^3)$, with support in $\{ x \in \R^3; |x| < 1 \}$, and such that $\| \phi \|_{L^2(\R^3)}^2=1$. We fix $y \in \cD_R(\theta) \times \R$, and choose $\delta >0$ so small that $\phi_1(x)=\phi_2(x):=\delta^{-3 \slash 2} \phi(\delta^{-1}(x-y))$ is supported in $\cD_R \times \R$. Thus, upon multiplying \eqref{p4} by $\sigma$, and then sending $\sigma$ to infinity, we find with the aid of \eqref{a1b} and \eqref{4.6}, that
\bel{p4b}
\int_0^{+\infty} \left( \int_{\R^3} q(\delta x'+ y' + s \theta, \delta x_3 + y_3) | \phi(x',x_3) |^2 \d x' \d x_3 \right) \d s = 0,\ \delta >0.
\ee
Actually, if $y' \in \cD_R^-(\theta)$, then we have $|y'+ s \theta | > R$ for any $s \leq 0$, and hence $q(\delta x'+ y' + s \theta, \delta x_3 + y_3)=0$, uniformly in $|x|<1$, provided $\delta \in (0,1)$. This and \eqref{p4b} yield that
\bel{p4c} 
\int_{\R} \left( \int_{\R^3} q(\delta x'+ y' + s \theta, \delta x_3 + y_3) | \phi(x',x_3) |^2 \d x' \d x_3 \right) \d s = 0,\ \delta \in (0,1),\ (y',y_3) \in \cD_R^-(\theta) \times \R.
\ee
By performing the change of variable $t=-s$ in the above integral, and then substituting $(-\theta)$ for $\theta$ in the resulting identity, we get that
$$ \int_{\R} \left( \int_{\R^3} q(\delta x'+ y' +s \theta, \delta x_3 + y_3) | \phi(x',x_3) |^2 \d x' \d x_3 \right) \d s = 0,\ \delta \in (0,1),\ (y',y_3) \in \cD_R^-(-\theta) \times \R.$$
This and \eqref{p4c} yield that
$$
\int_{\R} \left( \int_{\R^3} q(\delta x'+ y' + s \theta, \delta x_3 + y_3) | \phi(x',x_3) |^2 \d x' \d x_3 \right) \d s = 0,\ \delta \in (0,1),\ (y',y_3) \in \cD_R \times \R.
$$
Next, sending $\delta$ to zero in the above identity, and taking into account that $\phi$ is normalized in $L^2(\R^3)$, we obtain for each $\theta \in \mathbb S^1$, that
$$ \cP(q)(\theta,y',y_3)= \int_\R q(y'+s \theta,y_3) \d s =0,\ (y',y_3) \in \D_R \times \R. $$
This entails $q=0$, since the partial X-ray transform is injective.

\subsubsection{Proof of the stability estimate \eqref{t3bb}}
\label{sec-stability-th1}
	
We have 
$$ |\xi'| \abs{\pd_{x_3} \widehat{A^\sharp}(\xi',x_3)}=\abs{\pd_{x_3} \widehat{\beta}(\xi',x_3)},\ \xi' \in \R^2,\ x_3 \in \R, $$
by \eqref{p2} and \eqref{p2b}, so we infer from \eqref{7.5}-\eqref{7.6aa} that
\bel{t3d}
| \widehat{\beta}(\xi',x_3) | + |\xi'| | \pd_{x_3} \widehat{A^\sharp}(\xi',x_3) | \leq 
C \langle \xi' \rangle^8 \para{\sigma^6 \norm{\Lambda_{A_1,q_1}-\Lambda_{A_2,q_2}}+\sigma^{-\epsilon}},\ \xi'\in\R^2,\ \ x_3 \in\R,
\ee
for all $\sigma>\sigma_*$, the constants $C$ and $\epsilon$ being the same as in Lemma \ref{L7.1}.

Fix $\rho \in (1,+\infty)$ and put $\cC_\rho := \{\xi' \in\R^2;\ \rho^{-1} \leq |\xi'| \leq \rho \}$. Then, upon applying the Plancherel theorem, we obtain
\bea
\norm{\pd_{x_3} a_j(\cdot,x_3)}_{L^2(\omega)}^2 & \leq & \norm{\pd_{x_3} \widehat{a_j}(\xi',x_3)}_{L^2(B(0,\rho^{-1}))}^2 + \rho^{-2} \int_{\R^2 \setminus B(0,\rho)} \langle \xi' \rangle^2 | \pd_{x_3} \widehat{a_j}(\xi',x_3) |^2 \d \xi' \nonumber \\
& & + \norm{\pd_{x_3} \widehat{a_j}(\xi',x_3)}_{L^2(\cC_\rho)}^2,\ x_3 \in \R,\ j=1,2. \label{t3e}
\eea
Further, as $\norm{\pd_{x_3} \widehat{a_j}(\xi',x_3)}_{L^2(B(0,\rho^{-1}))}^2 \leq |\omega| \rho^{-2} \norm{a_j}_{W^{1,\infty}(\Omega)}^2$ and
$\int_{\R^2 \setminus B(0,\rho)} \langle \xi' \rangle^2 | \pd_{x_3} \widehat{a_j}(\xi',x_3) |^2 \d \xi' \leq \norm{a_j}_{W^{1,\infty}(\Omega)}^2$, then there exists a constant $C>0$, depending only on $M$ and $\omega$, such that we have
\bel{p6} 
\norm{\pd_{x_3} \widehat{a_j}(\xi',x_3)}_{L^2(B(0,\rho^{-1}))}^2 + \rho^{-2} \int_{\R^2 \setminus B(0,\rho)} \langle \xi' \rangle^2 | \pd_{x_3} \widehat{a_j}(\xi',x_3) |^2 \d \xi' \leq \frac{M}{\rho^2}, 
\ee
according to \eqref{t3b}.
On the other hand, we derive from \eqref{t3d} that
$$
| \pd_{x_3} \widehat{a_j}(\xi',x_3) |^2 \leq C \rho^{14}(\sigma^{12} \delta^2 + \sigma^{-2\epsilon}),\ \xi'\in\mathcal C_\rho\cap B(0,\rho),\ \ x_3 \in \R,\ \sigma > \sigma_*,\ j=1,2,
$$
where $\delta := \norm{\Lambda_{A_1,q_1}-\Lambda_{A_2,q_2}}$.
Putting this and \eqref{t3e}-\eqref{p6} together, we get for every $\sigma > \sigma_*$, that
\bel{t3f}
\norm{\pd_{x_3} a_j}_{L_{x_3}^\infty(\R,L^2(\omega))}^2 \leq  C \left(\rho^{16}\sigma^{12}\delta^2+\rho^{16}\sigma^{-2\epsilon}+ \rho^{-2} \right),\ x_3 \in \R,\ j=1,2.
\ee
Now, choosing $\rho$ so large that $\rho > \sigma_*^{\epsilon \slash 8}$, we get upon taking $\sigma = \rho^{8 \slash \epsilon} > \sigma_*$ in \eqref{t3f}, that
\bel{p7}
\norm{\pd_{x_3} a_j(.,x_3)}_{L^2(\omega)}^2 \leq  C \left( \rho^{M_\epsilon} \delta^2 + \rho^{-2} \right),\ x_3 \in \R,\ j=1,2,
\ee
where $M_\epsilon := 16 + 96 \slash \epsilon$. Thus, if $\delta < \delta_0:=\sigma_*^{-\epsilon(M_\epsilon+2) \slash 16}$, then we have $\delta^{-2 \slash (M_\epsilon+2)} > \sigma_*^{\epsilon \slash 8}$, and we may apply \eqref{p7} with $\rho=\delta^{-2 \slash (M_\epsilon+2)}$, getting
\bel{p8} 
\norm{\pd_{x_3} a_j}_{L_{x_3}^\infty(\R,L^2(\omega))}^2 \leq  2 C \delta^{2 \mu_0},\ \mbox{with}\ \mu_0:=\frac{2}{M_\epsilon+2} \in (0,1),\ j=1,2.
\ee
From this and the fact, arising from \eqref{t3b}, that $\norm{\pd_{x_3} a_j}_{L_{x_3}^\infty(\R,L^2(\omega))} \leq (2 M \delta_0^{-2 \mu_0}) \delta^{2 \mu0*}$ for all $\delta \geq \delta_0$, it then follows that \eqref{p8} remains valid for every $\delta >0$. 

Finally, arguing as before with $\beta$ instead of $\pd_{x_3} A^\sharp$, we obtain in a similar way from \eqref{t3d}, that the norm $\norm{\beta}_{L_{x_3}^\infty(\R,L^2(\omega))}$ is upper bounded, up to some multiplicative constant depending only on $M$ and $\omega$, by $\delta^{\mu_0}$, and hence \eqref{t3bb} follows from this and \eqref{p8}.

\subsection{Proof of Theorem \ref{Th2}}
The proof is an adaptation of the one of \eqref{Th1}, where the adaptation is to take into account the extra information given by \eqref{BB}. 
Actually, since $A_j=(a_{1,j},a_{2,j},a_{3,*})$ and $A=(A^\sharp,0)$ with $A^\sharp=(a_1,a_2)$, by \eqref{p5}, then \eqref{BB} yields
\bel{BB1}
\norm{\pd_{x_1} a_2-\pd_{x_2} a_1}_{L_{x_3}^\infty(\R,L^2(\omega))}=\norm{\pd_{x_1} a_2-\pd_{x_2} a_1}_{L_{x_3}^\infty(-r,r;L^2(\omega))},
\ee
and
\bel{BB2}
\norm{\pd_{x_3} a_j}_{L_{x_3}^\infty(\R,L^2(\omega))}=\norm{\pd_{x_3} a_j}_{L_{x_3}^\infty(-r,r;L^2(\omega))},\ j=1,2.
\ee
More precisely, we still consider GO solutions $u_{1,\sigma}$ and $u_{2,\sigma}$, defined by \eqref{GO}-\eqref{def-b} and \eqref{def-Phi}, with $\phi_1=\pd_{x_j} \phi$, for $j=1,2,3$, and $\phi_2=\phi$, where $\phi$ is given by \eqref{def-alpha}-\eqref{7.1}. The parameter $x_3$ appearing in \eqref{def-alpha}, is taken in $(-r,r)$, and we impose 
$\sigma > (r'-r)^{-24}$, in such a way that $\phi \in\cC^\infty_0(\cD^-_R(\theta)\times(-r',r'))$. Moreover, the functions
$$
f_\sigma(t,x) = \Phi_2(2\sigma t,x)b_2(2 \sigma t,x)e^{i\sigma(x.\theta-\sigma t)}\ \mbox{and}\ g_{\sigma}=\Phi_1(2\sigma t,x)b_1(2\sigma t,x)e^{i\sigma(x.\theta-\sigma t)},\ (t,x)\in\Sigma, 
$$
lie in $H^{2,1}_0((0,T)\times\Gamma_{r'})$, and we infer from \eqref{5.5} upon arguing as the derivation of Lemma \ref{L7.1}, that
$$
\norm{\widehat{\beta}(\xi',\cdot)}_{L^\infty(-r',r')} \leq C \langle \xi' \rangle^7 \para{\sigma^6 \norm{\Lambda_{A_1,q_1,r'}-\Lambda_{A_2,q_2,r'}}+\sigma^{-\epsilon}},
$$
and that
$$
\norm{\pd_{x_3} \widehat{\beta}(\xi',\cdot)}_{L^\infty (r',r')}\leq C \langle \xi' \rangle^8 \para{\sigma^6 \norm{\Lambda_{A_1,q_1,r'}-\Lambda_{A_2,q_2,r'}} +\sigma^{-\epsilon}}.
$$
for all $\xi'\in\R^2$ and some $\epsilon>0$. Here, the constant $C$ depends only on $\omega$, $T$, $M$, $r$, $r'$ and $\epsilon$. The desired result follows from this and \eqref{BB1}-\eqref{BB2} by arguing in the same way as in the proof of Theorem \ref{Th1}.

\subsection{Proof of Theorem \ref{Th22}}

We only prove \eqref{t22a}, the derivation of \eqref{t22c} being quite similar to the one of \eqref{n-t3bb}. 
To this end, we fix $\xi' \in \R^2$, and remind that $A=(A^\sharp,0) \in \cA_0$, with $A^\sharp=(a_1,a_2)$, satisfies $\pd_{x_1} a_1 + \pd_{x_2} a_2 =0$ in $\R^2$, so we get
\beas 
\widehat{A^\sharp}(\xi',x_3) \cdot \xi' 
& = & i (2\pi)^{-1} \int_{\R^2} A^\sharp(x',x_3) \cdot \nabla_{x'} e^{-i x'\cdot\xi'} \d x' \\
& = & -i (2\pi)^{-1} \int_{\R^2} e^{-i x' \cdot \xi'} \left( \pd_{x_1} a_1 + \pd_{x_2} a_2 \right)(x',x_3) \d x' = 0,\ x_3 \in \R,
\eeas
upon integrating by parts. Thus, remembering that $\xi_\perp'=(- | \xi' |^{-1} \xi_2, | \xi' |^{-1} \xi_1)$ whenever $\xi' \neq 0$, we obtain
$$ \widehat{A^\sharp}(\xi',x_3)=(\widehat{A^\sharp}(\xi',x_3) \cdot \xi_\perp') \xi_\perp' = (-\xi_2 \widehat{a_1} + \xi_1 \widehat{a_2})(\xi',x_3) \frac{\xi_\perp'}{| \xi'|},\ x_3 \in \R, $$
and consequently
\bel{p9} 
| \xi' | \widehat{A^\sharp}(\xi',x_3) = -i \widehat{\beta}(\xi',x_3),\ \ x_3 \in \R,
\ee
from \eqref{def-beta}, the above identity being valid for $\xi'=0$ as well.
Therefore, arguing as in the derivation of \eqref{t3bb} from \eqref{7.5}, we infer from \eqref{7.6aa} and \eqref{p9} that 
\bel{t22e}
\norm{A}_{L_{x_3}^\infty(\R,L^2(\omega))}^3 \leq C \norm{\Lambda_{A_1,q_1}-\Lambda_{A_2,q_2}}^{\mu_1},
\ee
where $C>0$ and $\mu_1 \in (0,1)$ are two constants depending only on $T$, $\omega$, and $M$.

We turn now to estimating $\| q \|_{L_{x_3}^\infty(\R,H^{-1}(\omega))^3}$, where $q=q_1-q-2$.
With reference to \eqref{partition}-\eqref{a23}, we fix $x_3 \in \R$ and $k \in \{1,\ldots,N\}$, and pick a function $\phi_{*,k}$, expressed by \eqref{def-alpha}-\eqref{7.1} in the particular case where $A_\sigma^\sharp$ is uniformly zero, i.e.
\bel{def-phik}
\phi_{*,k}(y) := h \left( y' \cdot \theta + r_{x'_k} \right) e^{-\frac{i}{2} y' \cdot \xi'} \varphi_k^{1/2}(y'-(y' \cdot \theta)\theta) \alpha_{\sigma}(y_3),\ y' \in \R^2,\ y_3 \in \R.
\ee
In view of Proposition \ref{pr-4.1}, we consider a GO solution $u_{j,\sigma}$, $j=1,2$, to the magnetic Schr\"odinger equation
$(i \pd_t + \Delta_{A_j} + q_j ) u_{j,\sigma} =0$ in $Q$, described by \eqref{GO} with
\bel{p9b}
\Phi_1=\overline{\Phi_{*,k}},\  \Phi_2=\Phi_{*,k},\ \mbox{and}\ \Phi_{*,k}(t,x) := \phi_{*,k}(x'-t\theta,x_3),\ t \in \R,\ x' \in \R^2,\ x_3 \in \R.
\ee
Bearing in mind that $\nabla \cdot A=0$, we then apply \eqref{5.5} with $V=q-A \cdot (A_1+A_2)$, getting
$$
\langle q u_{2,\sigma} , u_{1,\sigma} \rangle_{L^2(Q)} = \langle A \cdot \left( (A_1+A_2) u_{2,\sigma} - 2 i \nabla u_{2,\sigma} \right) , u_{1,\sigma} \rangle_{L^2(Q)}
+ \langle (\Lambda_{A_1,q_1}-\Lambda_{A_2,q_2})f_{\sigma},g_\sigma \rangle_{L^2(\Sigma)},
$$
where $f_\sigma$ and $g_\sigma$ are given by \eqref{def-fsigma} and \eqref{def-gsigma}, respectively. Thus, we have
$$
\abs{\langle q u_{2,\sigma} , u_{1,\sigma} \rangle_{L^2(Q)}} \leq C \| A \|_{L^\infty(\Omega)^3} \| u_{1,\sigma} \|_{L^2(Q)} \| u_{2,\sigma} \|_{L^2(0,T;H^1(\Omega))}
+ \abs{\langle (\Lambda_{A_1,q_1}-\Lambda_{A_2,q_2})f_{\sigma},g_\sigma \rangle_{L^2(\Sigma)}},
$$
and consequently
\bel{p10}
\abs{\langle q u_{2,\sigma} , u_{1,\sigma} \rangle_{L^2(Q)}} \leq C  \sigma^{8 \mu} \left( \| A \|_{L^\infty(\Omega)^3} + 
\sigma^{17 \slash 3} \norm{\Lambda_{A_1,q_1}-\Lambda_{A_2,q_2}} \right) \seq{\xi'}^6,
\ee
by \eqref{r0}, \eqref{a20}, and \eqref{a23b}.

On the other hand, it follows readily from \eqref{GO} and \eqref{p9b}, that
\bel{p11}
\langle q u_{2,\sigma} , u_{1,\sigma} \rangle_{L^2(Q)} = \int_Q q(y) \Phi_{*,k}^2(2 \sigma t, y) (\overline{b_{1,\sigma}} b_{2,\sigma})(2 \sigma t, y) \d y \d t + R_{k,\sigma},
\ee
where $b_{j,\sigma}$, $j=1,2$, is given by \eqref{def-b}, and
\beas
R_{k,\sigma} & := & \int_{Q} q(y) \Phi_{*,k}(2 \sigma t,y) \left( b_{2,\sigma}(2 \sigma t,y) e^{i \sigma (y' \cdot \theta - \sigma t)} \overline{\psi_{1,\sigma}}(t,y) 
+  \psi_{2,\sigma}(t,y) \overline{b_{1,\sigma}}(2 \sigma t,y) e^{-i \sigma (y' \cdot \theta - \sigma t)} \right) \d y \d t \nonumber \\
& & + \int_{Q} q(y) (\psi_{2,\sigma} \overline{\psi_{1,\sigma}})(t,y) \d y \d t. 
\eeas
Therefore, $\abs{R_{k,\sigma}} $ is majorized by
\beas
& & \| q \|_{L^{\infty}(\Omega)} \left( \| \Phi_{*,k}(2\sigma\cdot,\cdot) \|_{L^2(Q)}  \left( \| \psi_{1,\sigma}  \|_{L^2(Q)} + \| \psi_{2,\sigma}  \|_{L^2(Q)} \right) + \| \psi_{1,\sigma}  \|_{L^2(Q)} \| \psi_{2,\sigma}  \|_{L^2(Q)} \right) \nonumber \\
& \leq & C \sigma^{-11 \slash 6} \cN_{\theta,\sigma}(\phi_{*,k})^2,
\eeas
in virtue of \eqref{def-nrm}--\eqref{4.6} and \eqref{55}, so we infer from \eqref{a23b} that
\bel{p12}
\abs{R_{k,\sigma}} \leq C  \sigma^{8 \mu - 7 \slash 6} \seq{\xi'}^6.
\ee
We turn now to examining the first term in the right hand side of \eqref{p11}. In light of \eqref{def-b}, we have
\bel{p13}
\int_Q q(y) \Phi_{*,k}^2(2 \sigma t, y) ( \overline{b_{1,\sigma}} b_{2,\sigma} ) (2 \sigma t, y) \d y \d t 
= \int_Q q(y) \Phi_{*,k}^2(2 \sigma t, y) \d y \d t + r_{k,\sigma},
\ee
with
\bel{p13a}
r_{k,\sigma} := \int_Q q(y) \Phi_{*,k}^2(2\sigma t,y) \left( e^{-i \int_0^{2 \sigma t} \theta \cdot A_\sigma^\sharp(y'-s \theta,y_3) \d s} - 1 \right) \d y \d t.
\ee
Next, as $e^{-i \int_0^{2 \sigma t} \theta \cdot A_\sigma^\sharp(y'-s \theta,y_3) \d s} - 1= -i \int_0^{2 \sigma t} \theta \cdot A_\sigma^\sharp(y'- \tau \theta,y_3) e^{-i \int_0^{\tau} \theta \cdot A_\sigma^\sharp(y'-s \theta,y_3) \d s} \d \tau$, we have 
$$ \abs{ e^{-i \int_0^{2 \sigma t} \theta \cdot A_\sigma^\sharp(y'-s \theta,y_3) \d s} - 1 }
\leq 2 \sigma T \norm{A_\sigma^\sharp}_{L^{\infty}(\R^3)^2} \leq C \sigma \norm{A}_{L^{\infty}(\Omega)^3},\ (t,y) \in Q. $$
Here we used the fact, arising from \eqref{def-a2ts} and \eqref{a40}-\eqref{def-aijsigma}, that for any $\sigma>0$, $\norm{A_\sigma^\sharp}_{L^{\infty}(\R^3)^2}$ is majorized, up to some multiplicative constant that is independent of $\sigma$, by $\norm{A^\sharp}_{L^{\infty}(\Omega)^2}$. Therefore, we infer from \eqref{t3b}, \eqref{55}, and \eqref{a23b}, that
\bel{p13b}
| r_{k,\sigma} | \leq C \sigma \norm{A}_{L^{\infty}(\R^3)^3} \norm{\Phi_{*,k}(2 \sigma \cdot, \cdot)}_{L^2(Q)}^2 
\leq C \norm{A}_{L^{\infty}(\R^3)^3} \norm{\phi_{*,k}}_{L^2(\R^3)}^2 \leq C \norm{A}_{L^{\infty}(\R^3)^3} \langle \xi' \rangle^6 \sigma^{8 \mu}.
\ee
We are left with the task of examining the integral
\bea
\int_Q q(y) \Phi_{*,k}^2(2 \sigma t, y) \d y \d t & = & \int_0^T \int_{\R^3} q(y) \phi_{*,k}^2(y'-2 \sigma t \theta, y_3) \d y' \d y_3 \d t \nonumber \\
& = & \frac{1}{2 \sigma} \int_0^{2 \sigma T} \int_{\R^3} q(y'+s \theta,y_3) \phi_{*,k}^2(y) \d y' \d y_3 \d s, \label{p14}
\eea
appearing in the right hand side of \eqref{p13}. To do that, we notice for all $\sigma > \sigma_*$, that 
$$ q(y'+s \theta,y_3) \phi_{*,k}^2(y) =0,\ s \in (-\infty,0) \cup (2 \sigma T,+\infty),\ y' \in \R^2,\ y_3 \in \R, $$
since $q$ and $\phi_{*,k}$ are supported in $B(0,R) \times \R$ and $\cD_R^-(\theta) \times \R$, respectively, and
that $\abs{y' + s \theta} > R$ whenever $y' \in \cD_R^-(\theta)$ and $s \in (-\infty,0) \cup (2 \sigma T,+\infty)$. In view of \eqref{def-P} and \eqref{p14}, this entails that
\beas
\int_Q q(y) \Phi_{*,k}^2(2 \sigma t, y) \d y \d t & = & \frac{1}{2 \sigma} \int_{\R} \int_{\R^3} q(y'+s \theta,y_3) \phi_{*,k}^2(y) \d y' \d y_3 \d s \nonumber \\
& = & \frac{1}{2 \sigma} \int_{\R^3} \cP(q)(\theta,y',y_3) \phi_{*,k}^2(y) \d y' \d y_3.
\eeas
Thus, arguing in the same way as in the derivation of \eqref{7.3}, we infer from \eqref{def-phik} that
\bel{t22h}
\abs{\widehat{q}(\xi',x_3)}
\leq C \langle \xi' \rangle^6 \left( \sigma^{20 \slash 3} \norm{\Lambda_{A_1,q_1}-\Lambda_{A_2,q_2}}+ \sigma^{8 \mu+1} \norm{A}_{L^\infty(\Omega)^3}+\sigma^{8 \mu - 1 \slash 6} \right),\ee
for every $\sigma>\sigma_*$.

The next step of the proof is to upper bound $\norm{A}_{L^\infty(\Omega)^3}$ in terms of $\norm{\Lambda_{A_1,q_1}-\Lambda_{A_2,q_2}}$. To do that, we pick $p>2$ and apply Sobolev's embedding theorem (see e.g. \cite[Corollary IX.14]{[Br]}), getting $\norm{A(\cdot,x_3)}_{L^\infty(\omega)^3}\leq C \norm{A(\cdot,x_3)}_{W^{1,p}(\omega)^3}$ for a.e. $x_3 \in \R$, where the constant $C>0$ depends only on $\omega$. Interpolating, we thus obtain that
$$ \norm{A(\cdot,x_3)}_{L^\infty(\omega)^3}\leq C \norm{A(\cdot,x_3)}_{W^{2,p}(\omega)^3}^{1 \slash 2} \norm{A(\cdot,x_3)}_{L^p(\omega)^3}^{1 \slash 2},\ x_3 \in \R. $$
This and \eqref{t22e} yield
$$
\norm{A}_{L^\infty(\Omega)^3} \leq C \norm{A}_{L_{x_3}^\infty(\R, L^p(\omega)^3)}^{1 \slash 2} \leq C\norm{A}_{L_{x_3}^\infty(\R, L^2(\omega)^3)}^{1 \slash p} \leq C\norm{\Lambda_{A_1,q_1}-\Lambda_{A_2,q_2}}^{\mu_1 \slash p},
$$
for some constant $C>0$ depending only on $\omega$, $M$ and $T$. Then, we find by substituting the right hand side of the above estimate for $\norm{A}_{L^\infty(\Omega)^3}$ in \eqref{t22h}, that
\bel{p18b}
\abs{\widehat{q}(\xi',x_3)} \leq C \seq{\xi'}^6 \left( \sigma^{20 \slash 3} \norm{\Lambda_{A_1,q_1}-\Lambda_{A_2,q_2}}+ \sigma^{8 \mu+1} \norm{\Lambda_{A_1,q_1}-\Lambda_{A_2,q_2}}^{\mu_1 \slash p}+\sigma^{8 \mu - 1 \slash 6} \right),\ \sigma > \sigma_*.
\ee 
With the notations of Subsection \ref{sec-stability-th1}, we infer from \eqref{p18b} and the estimate 
$$
\int_{\R^2 \setminus B(0,\rho)} \langle \xi' \rangle^{-2} | \widehat{q}(\xi',x_3) |^2 \d \xi' \leq \frac{M}{\rho^2},\ x_3 \in \R,
$$
which holds true for any $\rho \in (1,+\infty)$, that
\bel{p19}
\norm{q}_{L_{x_3}^\infty(\R,H^{-1}(\omega))} \leq C \left( \rho^6 \sigma^{20 \slash 3} \delta^{\mu_1 \slash p} + \sigma^{8\mu - 1 \slash 6}+ \rho^{-1} \right),\ \sigma > \sigma_*,
\ee
where $\delta = \norm{\Lambda_{A_1,q_1}-\Lambda_{A_2,q_2}} \in (0,1)$. Thus, for $\mu \in (0, 1 \slash 48)$ and $\delta \in \left( 0,\sigma_*^{-(41-48 \mu)p \slash (6 \mu_1)} \right)$, we obtain \eqref{t22a} with $\mu_2:=(1-48 \mu)\mu_1 \slash (7p (41-48\mu))$ by taking $\rho=\delta^{-\mu_2}$ and $\sigma=\delta^{-42 \mu_2 \slash (1-48 \mu)}$ in \eqref{p19}.




\begin{thebibliography}{99}
\bibitem{[Avdonin-al]}{S. Avdonin, S. Lenhart and V. Protopopescu}, {\it Determining the potential in the Schr\"odinger
equation from the Dirichlet to Neumann map by the Boundary Control method,} J. Inverse and
Ill-Posed Problems, 13 (2005), no. 5, 317-330.
%

\bibitem{[BLR]}{C. Bardos, G. Lebeau and J. Rauch}, {\it Sharp sufficient conditions for the
observation, control and stabilization from the boundary}, SIAM J. Control Optim. 30 (1992), 1024--1165.


\bibitem{[Baudouin-Puel]}{L. Baudouin and J.-P. Puel}, 
{\it Uniqueness and stability in an inverse problem for the Schr\"ondinger
equation}, Inverse Problems 18 (2002), 1537-1554.
%
\bibitem{[Belishev]}{M. Belishev}, {\it An approach to multidimensional inverse problems for the wave equation}, Dokl. Akad. Nauk SSSR,  297 (1987), 524-527.
%
\bibitem{[Bellassoued]}{M. Bellassoued}, {\it Uniqueness and stability in determining the speed of
propagation of second-order hyperbolic equation with variable
coefficients}, Appl. Anal. 83 (2004), 983-1014.
%
\bibitem{[Bellassoued2]} {M. Bellassoued}, {\it Stable determination of coefficients in the dynamical Schr\"odinger equation in a magnetic field}, preprint, 	arXiv:1510.04247.
%
\bibitem{[Bellassoued-Benjoud]}{M. Bellassoued and H. Benjoud}, {\it Stability estimate for an inverse problem for the wave equation in a magnetic field,} Appl. Anal. 87 (2008), no. 3, 277-292.

\bibitem{[BC1]}{M. Bellassoued and M. Choulli}, {\it Logarithmic stability in the dynamical inverse problem for the Schrodinger equation by an arbitrary boundary observation}, J. Math. Pures Appl.,  91 (2009), no. 3, 233-255.

\bibitem{[BC2]}{M. Bellassoued and M. Choulli}, {\it Stability estimate for an inverse problem for the magnetic Schr\"odinger equation from the Dirichlet-to-Neumann map}, J. Funct. Anal., 258 (2010), 161-195.

\bibitem{[BD]}{M. Bellassoued and  D. Dos Santos Ferreira}, {\it Stable determination of coefficients in the dynamical anisotropic Schr\"odinger equation from the Dirichlet-to-Neumann map}, Inverse Problems, 26 (2010), 125010. 


\bibitem{[Bell-Jel-Yama1]}{M. Bellassoued, D. Jellali and M. Yamamoto}, {\it Lipschitz stability for a hyperbolic inverse problem by
finite local boundary data,} Applicable Analysis 85 (2006),
1219-1243.


\bibitem{[BKS]}{M. Bellassoued, Y. Kian and E. Soccorsi}, {\it An inverse stability result for non compactly supported potentials by one arbitrary lateral Neumann observation}, 
Journal of Differential Equations 260 (2016), no. 10, 7535-7562.
%

\bibitem{[Br]}{H. Brezis}, {\it Analyse Fonctionnnelle. Th\'eorie et applications}, Collection math\'ematiques appliqu\'ees pour la ma\^{i}trise, Masson, Paris 1993.

\bibitem{[Bukhgeim-Klibanov]}{A. L. Bukhgeim and M. V. Klibanov}, {\it Global uniqueness of class of multidimentional inverse
problems}, Soviet Math. Dokl., 24 (1981), 244-247.
%


%
\bibitem{[CM]}{F. Cardoso and R. Mendoza}, {\it On the hyperbolic Dirichlet to Neumann functional,} Commun.
Partial Diff. Equat., 21 (1996), 1235-1252.
%
%
\bibitem{[CH]}{T. Cazenave and A. Haraux}, {\it Introduction aux probl\`emes d'\'evolution semi-lin\'eaires}, ellipses, Paris, 1990.

%
\bibitem{[ChS]}{M. Choulli, E. Soccorsi}, {\it Recovering the twisting function in a twisted waveguide from the DN map}, J. Spectr. Theory, 5 (2015), no. 2, 295-329.

\bibitem{[CKS1]}{M. Choulli, Y. Kian and E. Soccorsi}, {\it Determining the time dependent external potential from the DN map in a periodic quantum waveguide}, SIAM J.Math. Anal., 47  (2015), no. 6,  4536-4558.
%
\bibitem{[CKS2]}{M. Choulli, Y. Kian and E. Soccorsi}, {\it Stability result for elliptic inverse periodic coefficient problem by partial Dirichlet-to-Neumann map}, arXiv:1601.05355.
%
\bibitem{[CKS3]}{M. Choulli, Y. Kian and E. Soccorsi}, {\it On the Calder\'on problem in periodic cylindrical domain with partial Dirichlet and Neumann data}, arXiv:1601.05358.
%
\bibitem{[CL]}{P.-Y. Chang and H.-H. Lin}, {\it Conductance through a single impurity in the metallic zigzag carbon nanotube}, Appl. Phys. Lett. 95, (2009), 082104.
%

\bibitem{[CS]}{M. Cristofol and E. Soccorsi}, {\it Stability estimate in an inverse problem for non-autonomous magnetic Schr\"odinger equations},
Applicable Analysis, 90 (2011), no. 10, 1499-1520.
%
\bibitem{[DKSU]}{D. Dos Santos Ferreira, C. E. Kenig, J. Sj\"ostrand and G.Uhlmann}, {\it Determining a magnetic Schr\"odinger
operator from partial Cauchy data}, Comm. Math. Phys., 271 (2007), no. 2, 467-488.

%
\bibitem{[Eskin]}{G. Eskin}, {\it Inverse problems for the Schr\"odinger equations with time-dependent electromagnetic potentials and the Aharonov-Bohm effect}, J. Math. Phys.,  49  
(2008), no. 2, 022105, 18 pp.
%
\bibitem{[Eskin3]}{G. Eskin}, {\it Global uniqueness in the inverse scattering problem for the Schr\"odinger operator with external
Yang-Mills potentials}, Comm. Math. Phys., 222 (2001), no. 3, 503-531.
%
\bibitem{[EskinRal]}{G. Eskin and J. Ralston}, {\it Inverse scattering problem for the Schr\"odinger equation with magnetic potential at
a fixed energy}, Commun. Math. Phys., 173 (1995), 199-224.
%
\bibitem{[Gr]}{P. Grisvard}, {\it Elliptic problems in nonsmooth domains}, Pitman, London, 1985.
%
\bibitem{[IS]}{V. Isakov and Z. Sun}, {\it Stability estimates for hyperbolic inverse problems with
local boundary data}, Inverse Problems 8 (1992), 193-206.
%
\bibitem{[KBF]} {C. Kane, L. Balents and M. P. A. Fisher}, {\it Coulomb Interactions and Mesoscopic Effects in Carbon Nanotubes}, Phys. Rev. Lett., 79, (1997), 5086-5089.
%
%
%
\bibitem{[KKS]}{O. Kavian, Y. Kian and E. Soccorsi}, {\it Uniqueness and stability results for an inverse spectral problem in a periodic waveguide}, Journal de Math\'ematiques Pures et Appliqu\'ees 104 (2015), no. 6, 1160-1189.
%
\bibitem{[Ki]}{Y. Kian}, {\it Stability of the determination of a coefficient for wave equations in an infinite waveguide}, Inverse Probl. Imaging, 8 (2014), no. 3, 713-732.
%
\bibitem{[Ki1]}{Y. Kian}, {\it A multidimensional Borg-Levinson theorem for magnetic Schr\"odinger operators with partial spectral data}, arXiv:1504.04514.
%
\bibitem{[KPS1]}{Y. Kian, Q. S. Phan and E. Soccorsi}, {\it Carleman estimate for infinite cylindrical quantum domains and application to inverse problems},
Inverse Problems, 30, 5 (2014), 055016. 
%
\bibitem{[KPS2]}{Y. Kian, Q. S. Phan and E. Soccorsi}, {\it H\"older stable determination of a quantum scalar potential in unbounded cylindrical domains},
Jour. Math. Anal. Appl.,  426 (2015), no. 1, 194-210. 
%
\bibitem{[Klibanov]}{M. V. Klibanov}, {\it Inverse problems and Carleman estimates}, Inverse Problems 8 (1992), 575-596.
%
\bibitem{[KT]}{M. V. Klibanov and A. A. Timonov}, {\it Carleman estimates for coefficient inverse problems and
numerical applications}, VSP, Utrecht, 2004.
%
\bibitem{[KLU]} {K. Krupchyk, M. Lassas and G. Uhlmann}, {\it Inverse problems with partial data for a magnetic Schr\"odinger operator in an infinite slab and on a bounded domain}, Comm. Math. Phys., 312 (2012), 87-126.
%
\bibitem{[KU]} {K. Krupchyk and G. Uhlmann}, {\it Uniqueness in an inverse boundary problem for a magnetic Schrodinger operator with a bounded magnetic potential}, Comm. Math. Phys., 327 (2014), no. 1, 993-1009.


%
%
\bibitem{[LU]}{X. Li and G. Uhlmann}, {\it Inverse problems with partial data in a slab}, 
Inverse Problems and Imaging, 4 (2010), no. 3, 449-462.

\bibitem{[LM1]}{J.-L. Lions and E. Magenes}, {\it Probl\`emes aux limites non homog\`enes et applications}, Vol. I, Dunod, Paris, 1968.
%
\bibitem{[LM2]}{J.-L. Lions and E. Magenes}, {\it Probl\`emes aux limites non homog\`enes et applications}, Vol. II, Dunod, Paris, 1968.
%
\bibitem{[Machtyngier]}{E. Machtyngier}, {\it Exact controllability for the Schr\"odinger equation}, SIAM J. Control and Optimization, 32 (1994), no. 1, 24-34.


%
\bibitem{[Rakesh1]}{Rakesh}, {\it An inverse problem for the wave equation in the half plane}, Inverse Problems, 9 (1993), 433-441.
%
\bibitem{[Rakesh-Symes]}{Rakesh and W. Symes}, {\it Uniqueness for an inverse problems for the wave equation},
Commun. Partial Diff. Equat., 13 (1988), 87-96.
%

\bibitem {[Salo]}{M. Salo}, {\it Inverse problems for nonsmooth first order perturbations of the
Laplacian}, Ann. Acad. Scient. Fenn. Math. Dissertations, Vol. 139, 2004.
%
\bibitem{[Salo1]}{M. Salo}, {\it Semiclassical pseudodifferential calculus and the reconstruction
of a magnetic field}, Commun. Partial Diff.
Equat., 31 (2006), no. 11, 1639-1666.

%
%
\bibitem{[SU]}{P. Stefanov and G. Uhlmann}, {\it Stability estimates for the hyperbolic Dirichlet to Neumann
map in anisotropic media,} J. Funct. Anal., 154 (1998),
330-358.
%
\bibitem {[St]}{E. M. Stein}, {\it Singular integrals and differentiability properties of functions}, Princeton University Press, 1970.
%
\bibitem {[Su1]}{ Z. Sun}, {\it On continous dependence for an inverse initial boundary value problem for the wave equation}, J. Math.
Anal. Appl., 150 (1990) 188-204.
%
\bibitem {[Su2]} {Z. Sun}, {\it An inverse boundary value problem for the Schr\"odinger operator with vector potentials}, Trans. Amer. Math. Soc.,  338 (1992), no. 2, 953-969.
%
\bibitem{[Ta]}{H. Tanabe}, {\it Functional Analytic Methods for Partial Differential Equations}, Marcel Dekker, (1996).
%
 \bibitem{[Tr]}{H. Triebel}, {\it Interpolation Theory Function Spaces Differential Operators}, Johann Ambrocius Barth Verlag Heidelberg-Leipzig, Germany, 1995.
%
\bibitem{[Tzou]}{L. Tzou}, {\it Stability estimates for coefficients of magnetic
Schr\"odinger equation from full and partial boundary measurements}, Commun. Partial Diff.
Equat., 11 (2008), 1911-1952.
%

\bibitem{[Y]}{Y. Yang}, {\it Determining the first order perturbation of a bi-harmonic operator on bounded and unbounded domains from partial data},  J. Differential Equations, 257 (2014), 3607-3639.

\end{thebibliography}
\end{document}